\def\E{{\mathbb{E}}}
\def\EQ{{\mathbf{E}}}
\def\N{{\mathbb{N}}}
\def\P{{\mathbb{P}}}
\def\PQ{{\mathbf{P}}}
\def\R{{\mathbb{R}}}
\def\Ccal{{\mathcal{C}}}
\def\Fcal{{\mathcal{F}}}
\def\Gcal{{\mathcal{G}}}
\def\Pcal{{\mathcal{P}}}
\def\Scal{{\mathcal{S}}}
\def\Tcal{{\mathcal{T}}}
\newcommand{\norm}[1]{\lVert #1 \rVert}
\newcommand{\indicator}[1]{{\mathbbm{1}}_{\{#1\}}}
\newcounter{step}
\newtheorem{theorem}{Theorem}[section]
\newtheorem{prop}[theorem]{Proposition}
\newtheorem{lemma}[theorem]{Lemma}
\newtheorem{corollary}[theorem]{Corollary}
\newcommand{\forth}[3]{\noindent \textbf{Forthcoming #1~\ref{#2}.} \emph{#3}}
\title{A Stochastic Network with Mobile Users in Heavy Traffic}
\author[S.\ Borst]{Sem Borst$^1$}
\email{sem@win.tue.nl}
\author[F.\ Simatos]{Florian Simatos$^1$}
\email{f.simatos@tue.nl}
\address{$^1$Department of Mathematics \& Computer Science \\
Eindhoven University of Technology \\ P.O. Box 513 \\
5600 MB Eindhoven, The Netherlands}
\thanks{Most of this research was carried out while the second author
was affiliated with CWI and sponsored by an NWO-VIDI grant.}
\date{\today}
\begin{document}
\maketitle

\begin{abstract}
We consider a stochastic network with mobile users in a heavy-traffic regime.
We derive the scaling limit of the multi-dimensional queue length
process and prove a form of spatial state space collapse. 
The proof exploits a recent result by Lambert and Simatos~\cite{Lambert12:0}
which provides a general principle to establish scaling limits of
regenerative processes based on the convergence of their excursions.
We also prove weak convergence of the sequences of stationary joint
queue length distributions and stationary sojourn times.
\end{abstract}

\medskip

\setcounter{tocdepth}{1}
\hrule
\vspace{-2mm}
\tableofcontents
\vspace{-8mm}
\hrule

\section{Introduction}

We consider a stochastic network with mobile users, originally
introduced in Borst et al.~\cite{Borst06:0} as a model for
a wireless data communication network.  Fluid limits of this model
were studied in Simatos and Tibi~\cite{Simatos10:0} and in this paper
we examine the heavy-traffic characteristics.

In this model, users arrive at each of the nodes according to independent
Poisson processes and then move independently of one another while still
in service.  The trajectories of the users are Markovian and governed
by an irreducible generator matrix with stationary distribution~$\pi$
(see Section~\ref{sec:notation} for notation and definitions).
At each of the nodes, users share the total capacity of the node
according to the Processor-Sharing discipline.  This assumption affects
the sojourn time distribution but not the distribution of the number of
users at the various nodes since we will restrict ourselves to the case
of exponential service requirements.  The fundamental difference between
this model and Jackson networks is that in this model, users move
independently of the service received: transitions from one node to another
are governed by the users themselves rather than by completion of service. 

\subsection*{Coexistence of two time scales and the homogenization property}

The above-mentioned feature has an important consequence: although the
rate of arrivals to and departures from the network is bounded, the rate
of movements of users within the network grows linearly with the number
of users.  The model thus shares fundamental characteristics with two
classical queueing models, namely the $M/M/1$ and the $M/M/\infty$ queues,
which are brought about in heavy-traffic conditions.
As a result, it inherits the typical time scales of both these models:
a fast time scale which governs the internal movements of users between
nodes (the $M/M/\infty$ dynamics) and a slow time scale that governs
arrivals and departures (the $M/M/1$ dynamics).
In particular, the total number of users in the network evolves slowly
compared to the speed at which users spread in the network, provided
the network is highly loaded.

One of the key technical challenges is to control the $M/M/\infty$-like
dynamics of the internal movements of users on the slow time scale of
the $M/M/1$ queue.
The main idea is that it takes a constant time for any user to be
arbitrarily close to the stationary distribution~$\pi$ while on the
other hand, in a finite time window the number of users can only have
evolved by a bounded amount because of the $M/M/1$ dynamics.
Thus starting from a large initial state, on times of order one the
total number of users will have essentially stayed the same, while each
user will be close to the stationary distribution~$\pi$.
The law of large numbers therefore suggests that starting from a large
initial state, after a constant time, users should be spread across the
various nodes according to~$\pi$, i.e., approximately a fraction~$\pi_k$
of the users should be at node~$k$.
We call this property the \emph{homogenization property}.
To derive the heavy-traffic limit, we need to show that the system
stays homogenized not only at a given fixed time, but as long as there
are a large number of users in the network.
Technically, this entails control over some hitting times, which we
achieve via martingale and coupling arguments.

\subsection*{Convergence of the full process via the convergence of excursions}

The homogenization property provides a picture of what happens when
there are a large number of users in the network: the users are spread
across the various nodes according to the stationary distribution~$\pi$,
and in particular it is unlikely for any of the nodes to be empty.
Thus, the full aggregate service rate is likely to be used,
and the total number of users evolves as in a single $M/M/1$ queue with
the combined service rate of all nodes.
This property provides a useful handle on the processes of interest far
away from zero.  Imagine for instance that the network starts empty:
it will eventually become highly loaded, at which point the homogenization
property kicks in and holds until the network becomes empty (or close to)
again.  In other words, the homogenization property should give us
control over excursions that reach a certain height and it is therefore
natural to expect the entire process to converge as well.
This line of argument has been used in Lambert et al.~\cite{Lambert11:0}
to analyze the scaling limit of the Processor-Sharing queue length process
and has later been generalized in Lambert and Simatos~\cite{Lambert12:0}.
The proofs of the scaling limit results in the present paper leverage
the general principle established in Lambert and Simatos~\cite{Lambert12:0}.

The above arguments lead to the result that the joint queue length
process asymptotically concentrates on a line whose angle corresponds
to the stationary distribution~$\pi$, thus exhibiting a form of state
space collapse.
The total number of users, after scaling, behaves asymptotically
as in a single $M/M/1$ queue, and thus evolves as a reflected
Brownian motion, with the stationary distribution converging to
an exponential distribution.
These characteristics are strongly reminiscent of the heavy-traffic
behavior of the joint queue length process in various queueing networks,
see for instance Bramson~\cite{Bramson98:0}, Reiman~\cite{Reiman84:0},
Stolyar~\cite{Stolyar04:0}, Verloop et al.~\cite{Verloop11:0}
and Williams~\cite{Williams98:0}.
However, to the best of our knowledge, this is the first result which
shows that mobility of users, rather than scheduling, routing or
load balancing, can act as a mechanism producing state space collapse.

\subsection*{Organization of the paper}

Section~\ref{sec:notation} sets up notation used throughout the paper and summarizes the main results of the paper;
it also presents two key couplings, one with an $M/M/1$ queue which
provides a lower bound and one with a closed system that is easier to
handle with regard to homogenization.  This closed system is analyzed
in Section~\ref{sec:closed}, where two bounds are derived:
one concerning the time needed for the closed system to get homogenized,
and the other concerning the time that the system stays homogenized,
starting from a homogenized state.  These estimates are used in
Section~\ref{sec:open} to derive corresponding bounds for the open system.
These bounds allow us to prove that the system is null-recurrent in
the critical case, a case that had not been treated earlier.

The last three sections then deal with the heavy-traffic regime:
Section~\ref{sec:process} proves, using the above excursion arguments,
that the sequence of processes converges weakly towards
a multi-dimensional reflected Brownian motion;
Section~\ref{sec:conv-distr} investigates the asymptotic behavior of
the stationary distributions and Section~\ref{sec:sojourn-time}
examines the asymptotic behavior of the sojourn times.

\subsection*{Relation with previous work}

Some of the results of the present paper can partially be found in Simatos
and Tibi~\cite{Simatos10:0} and we wish to explain the new contributions.
The results of Section~\ref{sec:closed} somewhat strengthen the derivations
in~\cite{Simatos10:0}: we establish tighter bounds and remove a technical
condition on the generator matrix governing the mobility of users.
One of the main results of Section~\ref{sec:open}, namely
Proposition~\ref{prop:control-homogenization}, can also be found
in~\cite{Simatos10:0} under an additional technical condition on the
generator matrix.
In the present paper we provide an alternative proof of this result
which we believe to be potentially useful in a more general setting.
All the other results are completely new.
In order to have a self-contained paper, and also because we could
significantly simplify some tedious technical details of~\cite{Simatos10:0},
we present complete (and simpler) proofs for results which were already
partially known.

\section{Notation, main results and two useful couplings}
\label{sec:notation}

Let $\N = \{0,1,\ldots\}$ be the set of nonnegative integers. In this paper we deal with multi-dimensional processes, typically taking values in $\N^K$ or $\R^K$ for some $K \geq 2$, but we also need to consider real-valued processes. It is therefore convenient to abuse notation and use the common notation $\norm{\cdot}$ to denote the $L_1$ norm on every $i$-dimensional space~$\R^i$. Thus for each $i \geq 1$ and $y \in \R^i$ we write $\norm{y} = |y_1| + \cdots + |y_i|$. In the sequel, $\Pcal(u)$ for $u > 0$ denotes a Poisson random variable with parameter~$u$. It satisfies the following large-deviation type inequality:
\begin{equation}
\label{eq:deviation-P}
\P(\Pcal(u) \geq v) \leq \exp \left( -u h(v/u) \right), \ v \geq u,
\end{equation}
where from now on $h(x) = x \log x + 1 - x$. In the sequel we will use the fact that $h(x)$ grows at least linearly as $x \to +\infty$.

\subsection{User mobility} \label{sub:mobility}

In the rest of the paper we fix some integer $K \geq 2$ and we consider
a network of $K$~nodes.  Let $\xi$ be a Markov process with state space
$\{1, \ldots, K\}$ and generator matrix
$Q = (q_{k \ell}, 1 \leq k, \ell \leq K)$, and let $\gamma > 0$ be the
trace of~$-Q$.  We assume that $Q$ is irreducible, denote by~$\pi$ its
stationary distribution, and define
$\underline \pi = \min_{1 \leq k \leq K} \pi_k > 0$
and $\overline \pi = \max_{1 \leq k \leq K} \pi_k < 1$.
We will need to measure distance to~$\pi$ and introduce the function
$\varrho: \N^K \to [0,\infty)$ defined by
\[ \varrho(y) = \left \lVert \frac{y}{\norm{y}} - \pi \right \lVert, \ y \in \N^K \]
with the convention $\varrho(y) = 0$ if $\norm{y} = 0$.
For $k = 1, \ldots, K$, let $\PQ_k$ be the law of~$\xi$ started at~$k$.
For $t \geq 0$, let $\Delta(t) =
\max_{1 \leq k, \ell \leq K} |\PQ_k(\xi(t) = \ell) - \pi_\ell|$,
so that $\Delta(t) \to 0$ as $t \to +\infty$. For $\varepsilon > 0$, we define
$\tau(\varepsilon) = \sup\{ t \geq 0: \Delta(t) \geq \varepsilon \}$.

\subsection{Sequence of networks}
\label{sub:networks}

For each $n \geq 1$,
consider $(\lambda_{n,k}, 1 \leq k \leq K) \in [0,\infty)^K$
and $(\mu_{n,k}, 1 \leq k \leq K) \in [0,\infty)^K$,
and define $\lambda_n = \lambda_{n,1} + \cdots + \lambda_{n,K}$
and $\mu_n = \mu_{n,1} + \cdots + \mu_{n,K}$.
Let $x_n = (x_{n,k}, 1 \leq k \leq K)$ be the following c\`adl\`ag, $\N^K$-valued
stochastic process: for $t \geq 0$ and $k \in \{1, \ldots, K\}$,
$x_{n,k}(t)$ is the number of users at node~$k$ at time~$t$ in the
network subject to the following dynamics:
\begin{itemize}
	\item users arrive at node~$k$ according to a Poisson process with intensity $\lambda_{n,k}$, and arrival streams are independent;
	\item users have i.i.d.\ service requirements, exponentially distributed with parameter one, independent from the arrival processes;
	\item node~$k$ serves users according to the Processor-Sharing service discipline and has capacity $\mu_{n,k}$;
	\item while still in service, users move independently from everything else according to a Markov process with generator matrix~$Q$.
\end{itemize}

According to the Processor-Sharing service discipline, the server splits its service capacity equally among all the users present at any point in time: in particular, each customer present at node $k$ at time $t$ is instantaneously served at rate $\mu_{n,k} / x_{n,k}(t)$ (provided that $x_{n,k}(t) > 0$, i.e., the node is not empty). In particular, if a customer has been present in the network between time $s$ and~$t$ and was at node $\xi(u)$ at time $u \in [s,t]$, then between $[s,t]$ it received a service equal to
\begin{equation} \label{eq:service}
	\int_s^t \frac{\mu_{n,\xi(u)}}{x_{n, \xi(u)}(u)} du.
\end{equation}

Note that the model we consider is in sharp contrast with classical queueing networks, such as Jackson networks, where customers only move upon completion of service. In the model we consider, customers arrive with a single service requirement and they are served \emph{along their route}: customers receive some service where they are and their trajectory is governed by some random dynamics independent of the service. In the model we consider, the trajectory process $\xi$ appearing in~\eqref{eq:service} is a Markov process with generator matrix $Q$, independent from all the other stochastic primitives, i.e., the arrival processes and service requirements.

Because of~\eqref{eq:service}, the stochastic process $x_n$ may in general be difficult to analyze. Nonetheless, when service requirements are exponentially distributed such as here, the memoryless property of the exponential distribution implies that $x_n$ is a Markov process with generator~$\Omega_n$ given by
\begin{multline*}
	\Omega_n(f)(y) = \sum_{k = 1}^K \lambda_{n,k} \left( f(y+e_k) - f(y) \right) + \sum_{k = 1}^K \mu_{n,k} \left( f(y-e_k) - f(y) \right) \indicator{y_k > 0}\\
	+ \sum_{1 \leq k, \ell \leq K} q_{k\ell} y_k \left( f(y-e_k+e_\ell) - f(y) \right)
\end{multline*}
for any function $f: \N^K \to \R$ and any $y = (y_k, 1 \leq k \leq K) \in \N^K$, and where $e_k$ is the $k$th unit vector of~$\N^K$. Thus $x_n$ can be seen as a system of particles, where particles are added and removed (when possible) according to independent Poisson processes attached to each node, and while alive move independently according to the same Markovian dynamics. Note also that because of the memoryless property, the Processor-Sharing assumption has no impact on the law of~$x_n$; nonetheless this assumption will impact sojourn time distributions studied in Section~\ref{sec:sojourn-time}. 

For $y \in \N^K$, let $\P_n^y$ be the law of~$x_n$ started at~$y$
(from a network perspective, users start with i.i.d.\ exponential
service requirements), and denote by $r_n$ the $K$-dimensional process
$r_n = x_n / \norm{x_n}$ with the convention $r_n(t) = \pi$ when
$\norm{x_n(t)} = 0$, and where from now on if $b \in \R$ and $y \in \R^K$
then $by \in \R^K$ denotes the vector $(by_k, 1 \leq k \leq K)$; similarly, $by$ denotes the $K$-dimensional function $(by_k, 1 \leq k \leq K)$ if $b$ is a real-valued function, with $b y_k = (b(t) y_k, t \geq 0)$.

We denote by $a_{n,k}$ the arrival process at the $k$th node and by
$d_{n,k}$ the potential departure process from the $k$th node,
so that $a_{n,k}$ is a Poisson process with intensity $\lambda_{n,k}$
and $d_{n,k}$ is a Poisson process with intensity $\mu_{n,k}$ such that
the $2K$ processes $(a_{n,k}, d_{n,k}, 1 \leq k \leq K)$ are independent.
Moreover, by definition of~$x_n$ it holds that
\begin{equation}
\label{eq:sde}
\norm{x_n(t)} = \norm{x_n(0)} + \sum_{k=1}^K a_{n,k}(t) - \sum_{k=1}^K \int_{[0,t]} \indicator{x_{n,k}(u-) > 0} d_{n,k}(du), \ t \geq 0.
\end{equation}

Let $a_n = a_{n,1} + \cdots + a_{n,K}$
and $d_n = d_{n,1} + \cdots + d_{n,K}$, so that $a_n$ and $d_n$ are
independent Poisson processes with intensities~$\lambda_n$ and~$\mu_n$,
respectively. Define $\rho_n = \lambda_n / \mu_n$: it has been proved in~\cite{Ganesh10:0,Simatos10:0} that $x_n$ is positive-recurrent when $\rho_n < 1$ and transient when $\rho_n > 1$; in Proposition~\ref{prop:critical-case} we will complete the picture and prove that $x_n$ is null-recurrent when $\rho_n = 1$. When $\rho_n < 1$ denote by~$\nu_n$ the stationary distribution of~$x_n$. We define $\kappa = \sup_{n} (\lambda_n + \mu_n)$ and assume throughout the paper that $\kappa$ is finite.

\subsection{Heavy traffic regime and main results} \label{sub:main-results} In the rest of the paper we fix two parameters $\lambda > 0$ and $\alpha \geq 0$. The main results of this paper deal with the following heavy traffic regime.% . In the rest of the paper, we assume that the following assumption holds. \\
\\

\noindent \textbf{Heavy-traffic assumption.} \emph{We say that the heavy traffic assumption holds if $\rho_n \leq 1$ for each $n \geq 1$ and
\[ \lim_{n \to +\infty} \lambda_n = \lambda \ \text{ and } \ \lim_{n \to +\infty} n (1-\rho_n) = \alpha. \]
}

We believe that the techniques of the paper could be adapted to the case $\alpha \in \R$ (and hence remove the assumption $\rho_n \leq 1$). Under the heavy-traffic assumption, we have $\mu_n \to \lambda$. Note that we do not require each $\lambda_{n,k}$ or $\mu_{n,k}$ to converge, but only the corresponding sum. The heavy traffic assumption will be assumed to hold in Sections~\ref{sec:process}, \ref{sec:conv-distr} and~\ref{sec:sojourn-time}, that contain the main results of the paper. In Sections~\ref{sec:closed} and~\ref{sec:open}, we derive results on the system for fixed $n$ that do not require the heavy traffic assumption.

We now summarize the three main results of the paper: Theorems~\ref{thm:cv-process} and~\ref{thm:ht-stationary-distribution} establish the scaling limits of the sequence of processes $(x_n, n \geq 1)$ and of stationary measures $(\nu_n, n \geq 1)$, and Corollary~\ref{cor:sojourn} investigates the asymptotic sojourn time of a typical initial customer. Remember that $by = (b y_k, 1 \leq k \leq K)$ if $b$ is a real number or a real-valued function and $y \in \N^K$.
% 
% In Section~\ref{sec:process} we investigate the scaling limit of the sequence of processes $(x_n, n \geq 1)$: the main result of this section is the following result (remember that $by = (b y_k, 1 \leq k \leq K)$ if $b$ is a real number or a real-valued function and $y \in \N^K$).% In the sequel, if $y \in \N^K$ is a vector and $b$ a real number or a real-valued process, then the quantity $by$ denotes the vector $(by_k, 1 \leq k \leq K)$, with $b y_k = (b(t) y_k, t \geq 0)$ in case $b$ is a process.
\\

\forth{Theorem}{thm:cv-process}{Let $X_n(t) = x_n(n^2t) / n$. If the heavy traffic assumption holds, then the sequence of processes $(X_n, n \geq 1)$ under $\P_n^{0}$ converges weakly as $n$ goes to infinity to the $K$-dimensional process $\underline B \pi$, where $B$ is a Brownian motion with drift $-\lambda \alpha$ and variance $2 \lambda$ started at $0$.}
\\

\forth{Theorem}{thm:ht-stationary-distribution}{Assume that the heavy traffic assumption holds with $\alpha > 0$, and let $X_n(0) = x_n(0)/n$. Then the sequence $(X_n(0), n \geq 1)$ under $\P_n^{\nu_n}$ converges weakly as $n$ goes to infinity to the $K$-dimensional vector $E \pi$ where $E$ is an exponential random variable with parameter~$\alpha$, and all higher moments converge as well, i.e., $\E_n^{\nu_n}(\norm{X_n(0)}^r) \to r!/\alpha^r$ for all integer $r \geq 0$.}
\\

% For $n \geq 1$, we pick an initial user of the $n$th system uniformly
% at random, which we refer to as the \emph{tagged user}, and denote
% by~$E_n$ its initial service requirement (which is exponentially
% distributed with unit mean), by~$\xi_n$ its trajectory and by~$\chi_n$
% its sojourn time, so the following relations hold:

\forth{Corollary}{cor:sojourn}{Assume that the heavy traffic assumption holds with $\alpha > 0$ and let $(y_n)$ in $\N^K$ with $\varrho(y_n) \to 0$ and $\norm{y_n} / n \to b \in (0,\infty)$. Under $\P_n^{y_n}$, let $\chi_n$ be the sojourn time of one of the $\norm{y_n}$ initial customers chosen uniformly at random. Then the sequence of random variables $(n^{-1} \chi_n)$ under $\P_n^{y_n}$ converges weakly to $b E / \lambda$, with $E$ a mean one exponential random variable.}

\subsection{Functional operators} \label{sub:func-operators}

Fix some $i \geq 1$ and $\varepsilon > 0$. Let $D_i$ be the space of $\R^i$-valued c\`adl\`ag functions. For $f = (f_k, 1 \leq k \leq i) \in D_i$, we define the following operators:
\[ T^{\downarrow}(f,\varepsilon) = \inf\{ t \geq 0: \norm{f(t)} \leq \varepsilon \}, \ T^{\uparrow}(f,\varepsilon) = \inf\{ t \geq 0: \norm{f(t)} \geq \varepsilon \} \]
as well as
\[ T_0(f) = \inf\{ t > 0: \norm{f(t)} = 0 \} \ \text{ and } \ \widetilde T_0(f) = \min_{1 \leq k \leq i} T_0(f_k). \]

For $t \geq 0$, let $\sigma, \theta_t: D_i \to D_i$ be the stopping
and shift operators, defined by $\sigma(f)(s) = f(s \wedge T_0(f))$
and $\theta_t(f)(s) = f(t+s)$, respectively, for $f \in D_i$ and $s \geq 0$.
Define also the map $e_\varepsilon^\uparrow: D_i \to D_i$ as follows:
\[ e_\varepsilon^\uparrow(f) = \left (\sigma \circ \theta_{T^\uparrow(f, \varepsilon)} \right)(f). \]
In words, $e_\varepsilon^\uparrow(f)$ is the process~$f$ shifted at the first time $T^\uparrow(f, \varepsilon)$ when $\norm{f}$ reaches level~$\varepsilon$ and stopped at the first time it reaches~0 afterwards.
Finally, let $g_\varepsilon(f)$ be the left endpoint of the first excursion of $\norm{f}$ to reach level~$\varepsilon$:
\[ g_\varepsilon(f) = \sup \left\{ t \leq T^\uparrow(f, \varepsilon): \norm{f(t)} = 0 \right\}. \]

Note that similarly as $\norm{\cdot}$, we use the same notation to refer to operators defined on functions taking values in $\R^i$ for any $i \geq 1$; for instance, we have $T^\uparrow(f, \varepsilon) = T^\uparrow(\norm{f}, \varepsilon)$.

\subsection{Coupling with an $M/M/1$ queue}

The process~$x_n$ is naturally coupled with the queue length process of
an $M/M/1$ queue with arrival rate~$\lambda_n$ and service rate~$\mu_n$.
First, for any $f \in D_1$, define the function $\underline f$,
called the function~$f$ reflected above its past infimum, by
\[ \underline f(t) = f(t) - \min \left( \inf_{0 \leq s \leq t} f(s), 0 \right). \]

For $n \geq 1$, let in the sequel $\widetilde \ell_n = \norm{x_n(0)} + a_n - d_n$ and $\ell_n = \underline {\widetilde \ell}_n$ be the process $\widetilde \ell_n$ reflected above its past infimum. Since $a_n$ and $d_n$ are independent Poisson processes with respective intensity~$\lambda_n$ and~$\mu_n$, $\widetilde \ell_n$ is a continuous-time random walk and $\ell_n$ is equal in distribution to the queue length process of an $M/M/1$ queue with arrival rate~$\lambda_n$ and departure rate~$\mu_n$.

Intuitively, this coupling does the following: the potential total
departure process from both~$x_n$ and~$\ell_n$ is given by~$d_n$.
When $d_{n,k}$ rings, there is no departure if $x_{n,k} = 0$ while
there may be other users elsewhere.  The process~$\ell_n$ ignores how
users are spread in the network: if there are users in the network
and one of the $d_{n,k}$ rings, then one of the users leaves.
Thus $\norm{x_n}$ and $\ell_n$ coincide as long as there is no empty node,
and at all times there are more departures from~$\ell_n$ than from~$x_n$.
Formally, we have the following result; recall that if $\rho_n < 1$,
then $x_n$ is positive-recurrent with stationary distribution~$\nu_n$.

\begin{lemma}
\label{lemma:coupling-MM1}
For any $y \in \N^K$, the two following properties holds $\P_n^y$-almost surely:
	\begin{itemize}
		\item $\norm{x_n(t)} = \ell_n(t)$ for all $t \leq \widetilde T_0(x_n)$;
		\item $\norm{x_n(t)} \geq \ell_n(t)$ for all $t \geq 0$.
	\end{itemize}
In particular, if $\rho_n < 1$, then $\norm{x_n(0)}$ under $\P_n^{\nu_n}$ is stochastically lower bounded by a geometric random variable with parameter~$\rho_n$.
\end{lemma}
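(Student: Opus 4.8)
Write $D_{n}(t)=\sum_{k=1}^{K}\int_{[0,t]}\indicator{x_{n,k}(u-)>0}\,d_{n,k}(du)$ for the process counting the \emph{actual} departures from $x_n$, so that \eqref{eq:sde} reads $\norm{x_n(t)}=\norm{x_n(0)}+a_n(t)-D_n(t)$. Since the $d_{n,k}$ are Poisson processes (no atom at $0$) and the integrands are bounded by $1$, the process $D_n$ is nondecreasing, $D_n(0)=0$, and $D_n(t)\le d_n(t)$ for all $t$. Both assertions of the lemma come from comparing $D_n$ with the analogous ``actual departure'' process of $\ell_n$; note also that $\ell_n(0)=\widetilde\ell_n(0)=\norm{x_n(0)}$ because $\norm{x_n(0)}\ge 0$.

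For the first bullet I would first observe that $x_{n,k}(u-)>0$ for every $k$ and every $0<u\le\widetilde T_0(x_n)$: each $x_{n,k}$ is $\N$-valued and c\`adl\`ag, and by definition of $\widetilde T_0$ it stays $\ge 1$ on $(0,u)\subseteq(0,\widetilde T_0(x_n))$, so its left limit at $u$ is $\ge 1$. Hence on $[0,\widetilde T_0(x_n)]$ every potential departure is a real one, $D_n(t)=d_n(t)$, and therefore $\norm{x_n(t)}=\norm{x_n(0)}+a_n(t)-d_n(t)=\widetilde\ell_n(t)$. On this interval $\widetilde\ell_n$ equals $\norm{x_n}\ge 0$, so its running infimum is nonnegative, the reflection term in the definition of $\ell_n=\underline{\widetilde\ell}_n$ vanishes, and $\ell_n(t)=\widetilde\ell_n(t)=\norm{x_n(t)}$.

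The second bullet is the crux of the lemma. Rewrite $\norm{x_n(t)}=\widetilde\ell_n(t)+R_n(t)$ with $R_n:=d_n-D_n$ nondecreasing and $R_n(0)=0$; since moreover $\norm{x_n}\ge 0$ and $\ell_n=\underline{\widetilde\ell}_n$ is the Skorokhod reflection of $\widetilde\ell_n$ at $0$ — whose regulator is the \emph{minimal} nondecreasing process keeping $\widetilde\ell_n$ nonnegative — one obtains $\norm{x_n}=\widetilde\ell_n+R_n\ge\ell_n$. If one prefers to avoid invoking minimality of the Skorokhod map, the same conclusion follows from a direct first-crossing argument: the integer-valued process $\norm{x_n}-\ell_n$ vanishes at $0$ and changes only at the jump times of $d_n$ (at jumps of $a_n$ both processes increase by one), by $0$ or $\pm1$; were it ever to become negative, at the first such jump time $\ell_n$ would have to be empty just before that time (so the potential departure is not a real departure from $\ell_n$) while simultaneously a real departure occurs from $x_n$ (so $\norm{x_n}\ge1$ just before), which is impossible because $\norm{x_n}$ and $\ell_n$ agree immediately before that first crossing and would then both be $0$ there. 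I expect this pathwise comparison — between two processes that shed mass through different, state-dependent mechanisms (a single node being empty versus the whole system being empty) — to be the only genuinely delicate point, and the argument above is what I would write out carefully.

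Finally, assume $\rho_n<1$ and take $x_n(0)\sim\nu_n$. By stationarity, $\norm{x_n(t)}\overset{d}{=}\norm{x_n(0)}$ for every $t$, while the second bullet gives $\norm{x_n(t)}\ge\ell_n(t)$ almost surely; hence $\norm{x_n(0)}$ stochastically dominates $\ell_n(t)$ for every $t$. In the Markovian construction $(a_n,d_n)$ is independent of $x_n(0)$, so conditionally on $\norm{x_n(0)}$ the process $\ell_n$ is the queue length of an $M/M/1$ queue with load $\rho_n<1$; this queue is ergodic, so $\ell_n(t)$ converges in distribution as $t\to\infty$ to a geometric random variable with parameter $\rho_n$, and since stochastic dominance passes to the weak limit, $\norm{x_n(0)}$ stochastically dominates such a geometric variable.
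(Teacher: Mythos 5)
Your proof is correct. The argument for the first bullet (on $[0,\widetilde T_0(x_n)]$ every potential departure is actual, so $\norm{x_n}=\widetilde\ell_n$ there, and $\widetilde\ell_n\ge0$ makes the reflection inactive) and for the final stochastic-dominance claim match the paper's. Where you diverge is on the key inequality $\norm{x_n}\ge\ell_n$: the paper runs a jump-by-jump induction over the ringing times of $d_n$, checking that the inequality is preserved at each such epoch --- which is exactly the first-crossing argument you sketch as a fallback. Your primary argument is more structural: writing $\norm{x_n}=\widetilde\ell_n+R_n$ where $R_n=d_n-D_n$ is the nondecreasing, nonnegative process counting suppressed potential departures, you observe that $R_n$ keeps $\widetilde\ell_n$ nonnegative and invoke that the running-infimum regulator $L_n(t)=-\min\left(\inf_{s\le t}\widetilde\ell_n(s),0\right)$ is the pointwise smallest nondecreasing process that does so (indeed $R_n(t)\ge R_n(s)\ge\max(-\widetilde\ell_n(s),0)$ for all $s\le t$, whence $R_n\ge L_n$ and thus $\norm{x_n}=\widetilde\ell_n+R_n\ge\widetilde\ell_n+L_n=\ell_n$). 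This route is a touch cleaner and localizes the comparison at the level of regulators rather than path increments; the paper's induction is more elementary and avoids any appeal to the Skorokhod-map minimality, but both are short and both are correct. There is no gap.
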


\begin{proof}
	We prove the two first properties by induction. By construction, it holds that $\norm{x_n(0)} = \ell_n(0)$ and when $a_n$ rings, both processes $\norm{x_n}$ and $\ell_n$ increase by $1$, which preserves the difference $\norm{x_n} - \ell_n$. Internal movements in $x_n$ also preserve this difference, hence one only needs to inspect what happens when one of the $d_{n,k}$ rings.
	
	So to prove the first property, consider $t \leq \widetilde T_0(x_n)$ and $1 \leq k \leq K$ such that $d_{n,k}(\{t\}) = 1$ and assume that $\norm{x_n(t-)} = \ell_n(t-)$: we must show that $\norm{x_n(t)} = \ell_n(t)$. Since $t \leq \widetilde T_0(x_n)$, by definition of $\widetilde T_0(x_n)$ we have $x_{n,k}(t-) > 0$ and so $\norm{x_n(t)} = \norm{x_n(t)}-1$. On the other hand, we have $\ell_n(t-) = \norm{x_n(t-)} > 0$ by induction hypotheses and so by construction, $\ell_n(t) = \ell_n(t)-1$, which proves the desired property.
	
	Let us now prove the second property, so consider $t \geq 0$ and $1 \leq k \leq K$ such that $d_{n,k}(\{t\}) = 1$ and assume that $\norm{x_n(t-)} \geq \ell_n(t-)$: we must show that $\norm{x_n(t)} \geq \ell_n(t)$.  If $\ell_n(t-) = 0$ then $\ell_n(t) = 0$ and $\norm{x_n(t)} \geq \ell_n(t)$. Else, $\ell_n(t) = \ell_n(t-)-1$ and since $\norm{x_n(t)}$ decreases by at most~1 (it does when $x_{n,k}(t-)  > 0$, otherwise it stays constant) we also have $\norm{x_n(t)} \geq \ell_n(t)$ in this case.
	
	As for the second assertion of the lemma, for any $q \geq
0$ we have by stationarity and using the second property
	\[ \P_n^{\nu_n} \left( \norm{x_n(0)} \geq q \right) = \P_n^{\nu_n} \left( \norm{x_n(t)} \geq q \right) \geq \P_n^{\nu_n} \left( \ell_n(t) \geq q \right) \mathop{\longrightarrow}_{t \to +\infty} (\rho_n)^q \]
	since $\ell_n(t)$ converges in distribution as $t$ goes to infinity to a geometric random variable with parameter~$\rho_n$. This proves the result.
\end{proof}

We will often use the previous lemma in combination with the following lower bound on $\widetilde T_0(x_n)$:
\begin{equation}
\label{eq:lower-bound-tilde}
T^\uparrow(r_n - \pi, \delta) \leq \widetilde T_0(x_n), \quad \delta \leq \underline \pi.
\end{equation}

Indeed, since by definition we have $\norm{r_n(t) - \pi} \geq |r_{n,k}(t) - \pi_k|$ we see that $x_{n,k}(t) = 0$ implies that $\norm{r_n(t) - \pi} \geq \pi_k \geq \underline \pi$. In particular, this implies that $\norm{x_n(t)} = \ell_n(t)$ for all $t \leq T^\uparrow(r_n - \pi, \delta)$ and $\delta \leq \underline \pi$.

\subsection{Coupling with a closed system}
\label{sec:coupling-closed-system}

Let $x'_n$ be the process built on the same probability space as~$x_n$,
sharing the same stochastic primitives as $x_n$ but ignoring arrivals
and departures.  More precisely, if $(\xi_{n,i}, 1 \leq i \leq \norm{y})$
are the $\norm y$ independent (but not identically distributed,
due to the initial conditions) trajectories of the $\norm y$ initial
users under~$\P_n^y$, then we define
\[ x'_{n,k}(t) = \sum_{i=1}^{\norm y} \indicator{\xi_{n,i}(t) = k}, \ t \geq 0, 1 \leq k \leq K. \]
Note that $x'_n$ is a Markov process with generator~$\Omega'$ defined similarly as~$\Omega_n$ but with all $\lambda_{n,k}$'s and $\mu_{n,k}$'s equal to~0. In particular, the law of~$x_n'$ does not depend on~$n$. Under $\P_n^y$, by construction it holds that $y_k$ of the $(\xi_{n,i}, 1 \leq i \leq \norm y)$ are i.i.d.\ with common distribution~$\xi$ under~$\PQ_k$.

We define $r_n' = x'_n / \norm{x'_n}$ with the usual convention $r_n'(t) = \pi$ when $\norm{x'_n(t)} = 0$. Note that because $x'_n$ is a closed system, we have $\norm{x'_n(t)} = \norm{x'_n(0)}$. The following inequalities are intuitively clear, see for instance~\cite{Simatos10:0} for a formal proof: for any $t \geq 0$ and $k = 1, \ldots, K$,
\begin{equation}
\label{eq:coupling-inequalities}
-d_n(t) \leq x_{n,k}(t) - x'_{n,k}(t) \leq a_n(t) \ \text{ and } \ -d_n(t) \leq \norm{x_{n}(t)} - \norm{x'_{n}(t)} \leq a_n(t).
\end{equation}
This has the following useful consequence.

\begin{lemma}
\label{lemma:coupling-closed-system}
For any $y \in \N^K$ with $\norm y > 0$, we have
\[ \P_n^y \left( \forall t \geq 0: \norm{ r_{n}(t) - r'_{n}(t) } \leq \frac{2K(a_n(t) + d_n(t))}{\norm{y}} \right) = 1. \]
In particular, for any $y \in \N^K$, any $\delta > 0$ and any $t \geq 0$, we have
\begin{equation}
\label{eq:coupling-closed-system}
\P_n^y \left( \norm{ r_{n}(t) - r'_{n}(t) } \geq \delta \right) \leq \P\left( \Pcal(\kappa t) \geq \delta \norm{y} / (2K) \right).
\end{equation}
\end{lemma}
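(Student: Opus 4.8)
The plan is to establish the almost-sure bound first and then derive the probabilistic estimate~\eqref{eq:coupling-closed-system} as an immediate consequence. For the almost-sure part, I would start from the coupling inequalities~\eqref{eq:coupling-inequalities}, which control both the coordinatewise and the total discrepancy between $x_n$ and $x_n'$. Write $\delta_n(t) = \norm{x_n(t)} - \norm{x_n'(t)}$, so $|\delta_n(t)| \leq a_n(t) + d_n(t)$ by~\eqref{eq:coupling-inequalities}, and recall $\norm{x_n'(t)} = \norm{y}$ since the closed system conserves the number of users. The idea is to write $r_{n,k}(t) - r'_{n,k}(t)$ as a sum of two contributions: one coming from the difference in numerators $x_{n,k}(t) - x'_{n,k}(t)$, and one coming from the difference in denominators $\norm{x_n(t)}$ versus $\norm{y}$. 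Concretely,
\[
r_{n,k}(t) - r'_{n,k}(t) = \frac{x_{n,k}(t) - x'_{n,k}(t)}{\norm{x_n(t)}} + x'_{n,k}(t)\left(\frac{1}{\norm{x_n(t)}} - \frac{1}{\norm{y}}\right),
\]
valid whenever $\norm{x_n(t)} > 0$; I then bound each term.

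For the first term, summing over $k$ and using the coordinatewise bound $|x_{n,k}(t) - x'_{n,k}(t)| \leq a_n(t) + d_n(t)$ from~\eqref{eq:coupling-inequalities} gives a contribution at most $K(a_n(t)+d_n(t))/\norm{x_n(t)}$. For the second term, note $\sum_k x'_{n,k}(t) = \norm{y}$, and the factor $|1/\norm{x_n(t)} - 1/\norm{y}| = |\delta_n(t)|/(\norm{x_n(t)}\norm{y}) \leq (a_n(t)+d_n(t))/(\norm{x_n(t)}\norm{y})$, so this contribution is also at most $(a_n(t)+d_n(t))/\norm{x_n(t)}$ after summing. Adding the two, $\norm{r_n(t) - r'_n(t)} \leq (K+1)(a_n(t)+d_n(t))/\norm{x_n(t)}$, which is slightly better than claimed once one replaces $\norm{x_n(t)}$ by a lower bound. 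The subtlety is precisely that $\norm{x_n(t)}$ in the denominator can be smaller than $\norm{y}$ (or even zero); the lower bound $\norm{x_n(t)} \geq \norm{y} - d_n(t)$ from~\eqref{eq:coupling-inequalities} is not directly usable since the right-hand side may be nonpositive. The clean way around this is to treat the event $\{\norm{x_n(t)} = 0\}$ separately (there $r_n(t) = \pi$ by convention and on this event $d_n(t) \geq \norm{y}$, so $2K(a_n(t)+d_n(t))/\norm{y} \geq 2$, which trivially dominates $\norm{r_n(t)-r'_n(t)} \leq 2$), and on $\{\norm{x_n(t)} > 0\}$ to split further according to whether $\norm{x_n(t)} \geq \norm{y}$ (then replace the denominator by $\norm{y}$ directly, getting the bound with constant $K+1 \leq 2K$) or $\norm{x_n(t)} < \norm{y}$, in which case $\norm{x_n(t)} \geq 1$ and $\norm{y} - \norm{x_n(t)} \leq d_n(t)$ so one can absorb things — alternatively, and more simply, observe that on $\{0 < \norm{x_n(t)} < \norm{y}\}$ we have $d_n(t) \geq \norm{y} - \norm{x_n(t)} \geq 1$, hence $a_n(t)+d_n(t) \geq 1$ and since $\norm{r_n(t)-r'_n(t)} \leq 2 \leq 2K(a_n(t)+d_n(t))/\norm{y}$ whenever $a_n(t)+d_n(t) \geq \norm{y}/K$, the only remaining case has $a_n(t)+d_n(t) < \norm{y}/K$, forcing $\norm{x_n(t)} > \norm{y}(1 - 1/K) \geq \norm{y}/2$, and then $(K+1)(a_n(t)+d_n(t))/\norm{x_n(t)} \leq 2(K+1)(a_n(t)+d_n(t))/\norm{y} \leq 2K(a_n(t)+d_n(t))/\norm{y}$ suffices for $K \geq 2$. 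Assembling these cases yields the almost-sure inequality for all $t \geq 0$ simultaneously.

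For~\eqref{eq:coupling-closed-system}, fix $t$ and $\delta$; by the almost-sure bound, the event $\{\norm{r_n(t) - r'_n(t)} \geq \delta\}$ is contained in $\{a_n(t) + d_n(t) \geq \delta\norm{y}/(2K)\}$. Since $a_n$ and $d_n$ are independent Poisson processes with rates $\lambda_n$ and $\mu_n$, $a_n(t) + d_n(t)$ is Poisson with parameter $(\lambda_n + \mu_n)t \leq \kappa t$, and a Poisson variable with a smaller parameter is stochastically dominated by one with a larger parameter, so $\P(a_n(t)+d_n(t) \geq c) \leq \P(\Pcal(\kappa t) \geq c)$ for any $c$. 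Taking $c = \delta\norm{y}/(2K)$ gives the claim. The main obstacle is entirely in the first part: carefully handling the regime where the open-system population $\norm{x_n(t)}$ has dropped well below $\norm{y}$, so that dividing by $\norm{x_n(t)}$ is dangerous; the case analysis above (or any equivalent bookkeeping) resolves it, and nothing deeper is needed.
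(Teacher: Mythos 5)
Your decomposition is genuinely different from the paper's, and the difference matters: you write
\[
r_{n,k}(t) - r'_{n,k}(t) = \frac{x_{n,k}(t) - x'_{n,k}(t)}{\norm{x_n(t)}} + x'_{n,k}(t)\left(\frac{1}{\norm{x_n(t)}} - \frac{1}{\norm{y}}\right),
\]
which keeps the random quantity $\norm{x_n(t)}$ in the denominator of \emph{both} terms. The paper instead expands the single fraction $\bigl(x'_{n,k}\norm{x_n} - x_{n,k}\norm y\bigr)/(\norm y \norm{x_n})$ as $(x'_{n,k}-x_{n,k})\norm{x_n} + x_{n,k}(\norm{x_n}-\norm y)$, so that $\norm{x_n(t)}$ cancels in the first term and is controlled via $x_{n,k}/\norm{x_n}\le 1$ in the second; the resulting bound has only $\norm y$ in the denominator and no lower bound on $\norm{x_n(t)}$ is ever needed. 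That cancellation is the whole point, and it is what your route loses.

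The case analysis you build to compensate contains an arithmetic error that is fatal for $K=2$. In the final case you assert $2(K+1) \le 2K$ for $K\ge 2$, which is false since $2(K+1)=2K+2$. Even using the sharper lower bound $\norm{x_n(t)} > \norm y(1-1/K)$ rather than $\norm y/2$, you get the constant $\frac{K(K+1)}{K-1}$, which satisfies $\frac{K(K+1)}{K-1}\le 2K$ only for $K\ge 3$. Concretely, for $K=2$ your bound is $3D/\norm{x_n(t)}$ with $D=a_n(t)+d_n(t)$ and $\norm{x_n(t)}\ge \norm y - D$; the inequality $3D/(\norm y - D)\le 4D/\norm y$ requires $D\le \norm y/4$, while the trivial bound $\norm{r_n-r'_n}\le 2 \le 4D/\norm y$ requires $D\ge \norm y/2$, leaving the regime $\norm y/4 < D < \norm y/2$ uncovered (e.g.\ $D=\norm y/3$ gives $3D/(\norm y-D)=3/2$ versus the target $4D/\norm y=4/3$). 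Since the paper explicitly allows $K=2$, this is a genuine gap, not merely a worse constant. The decomposition itself is fine; you would need either the paper's rearrangement of the numerator, or a strictly stronger $L_1$ control on $\sum_k |x_{n,k}-x'_{n,k}|$ (of order $a_n+d_n$ rather than $K(a_n+d_n)$), which \eqref{eq:coupling-inequalities} does not directly supply. The deduction of \eqref{eq:coupling-closed-system} from the a.s.\ bound via Poisson stochastic monotonicity is correct and matches the paper.
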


\begin{proof}
Fix $k = 1, \ldots, K$ and $y \in \N^K$ with $\norm y > 0$: then under~$\P_n^y$, it holds that
	\begin{align*}
		\left| r'_{n,k}(t) - r_{n,k}(t) \right| & = \left| \frac{x'_{n,k}(t)}{\norm y} - \frac{x_{n,k}(t)}{\norm{x_n(t)}} \right| = \left| \frac{x'_{n,k}(t) \norm{x_n(t)} - x_{n,k}(t) \norm y}{\norm y\norm{x_n(t)}} \right|\\
		& = \left| \frac{(x'_{n,k}(t) - x_{n,k}(t)) \norm{x_n(t)} + x_{n,k}(t) ( \norm{x_n(t)} - \norm y)}{\norm y\norm{x_n(t)}} \right|\\
		& \leq \frac{| x'_{n,k}(t) - x_{n,k}(t) |}{\norm y} + \frac{ \left| \norm{x_n(t)} - \norm y \right|}{\norm y}.
	\end{align*}
	
Together with~\eqref{eq:coupling-inequalities} this gives the first result, which implies~\eqref{eq:coupling-closed-system} since $a_n(t) + d_n(t)$ for any $t \geq 0$ is a Poisson random variable with parameter $(\lambda_n + \mu_n)t$ and $\lambda_n + \mu_n \leq \kappa$ by definition of~$\kappa$.
\end{proof}

\section{Analysis of the closed system}
\label{sec:closed}

In this section we are interested in the closed system~$x'_n$.
Since its law does not depend on~$n$, in order to simplify the notation
we remove temporarily all subscripts~$n$ and write $\P^y$,
$x'$, $x'_k$, $r'$ and $r'_k$ instead of $\P^y_n$, $x'_n$, $x'_{n,k}$, $r'_n$ and $r'_{n,k}$, respectively.
We will denote $x'_k(t) = \sum_{i=1}^{\norm{x'(0)}} \indicator{\xi_i(t) = k}$,
with $(\xi_i)$ independent Markov processes with generator matrix~$Q$.

\subsection{Homogenization at a fixed deterministic time}

We first show that starting from any initial state, the system becomes close to homogenization in a constant time. Note that the following bound is consistent with the central limit theorem, which suggests that $\norm{r'(t) - \pi}$ should be of order $(\norm{x'(0)})^{-1/2}$ for large $t$ and $\norm{x'(0)}$. The following result improves on Simatos and Tibi~\cite[Proposition~$5.2$]{Simatos10:0}; here we use Chernoff's instead of Chebyshev's inequality. In the following lemma, both in the statement and the proof, we make us of the functions and constants $\tau(\cdot)$, $\Delta(\cdot)$, $\overline \pi$ and $\underline \pi$ that were introduced in Section~\ref{sub:mobility}. 

\begin{lemma}
\label{lemma:bound-closed}
There exists $\varepsilon_0 > 0$, depending only on~$\pi$ and~$K$, such that for any $0 < \varepsilon < \varepsilon_0$ and any $y \in \N^K$,
\[
\P^y \left( \norm{r'(\tau(\varepsilon/(2K))) - \pi} \geq \varepsilon \right) \leq 2 K \exp \left( -\frac{\varepsilon^2 \norm{y}}{4 K^2} \right).
\]
\end{lemma}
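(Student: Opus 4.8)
The plan is to reduce the claim to a concentration inequality for a sum of independent Bernoulli random variables, exploiting that the closed system conserves its total population. Write $m = \norm{y}$; the case $m = 0$ is trivial since then $r'(t) \equiv \pi$, so assume $m > 0$. Because $x'$ is closed we have $\norm{x'(t)} = m$ for every $t$, hence $r'(t) = x'(t)/m$ and
\[
\norm{r'(t) - \pi} \;=\; \frac{1}{m} \sum_{\ell = 1}^{K} \bigl| x'_{\ell}(t) - m \pi_{\ell} \bigr| .
\]
Moreover, under $\P^y$ one has $x'_{\ell}(t) = \sum_{i=1}^{m} \indicator{\xi_i(t) = \ell}$ with the $\xi_i$ independent, $y_k$ of them started from $k$; in particular $x'_{\ell}(t)$ is, for each fixed $t$ and $\ell$, a sum of $m$ independent (not identically distributed) $\{0,1\}$-valued random variables.

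First I would control the mean of $x'_{\ell}$ at the deterministic time $t^{\ast} := \tau(\varepsilon/(2K))$. Each map $t \mapsto \PQ_k(\xi(t) = \ell)$ is an entry of $e^{tQ}$, hence continuous, so $\Delta$ is continuous; it vanishes at infinity and satisfies $\sup_{t \ge 0} \Delta(t) = \Delta(0) = 1 - \underline\pi$ (using $\underline\pi + \overline\pi \le 1$, which yields $\max(\pi_{\ell}, 1 - \pi_{\ell}) \le 1 - \underline\pi$ for every $\ell$). Taking $\varepsilon_0 := 2K(1-\underline\pi)$ — a constant depending only on $\pi$ and $K$ — ensures that for $\varepsilon < \varepsilon_0$ the set $\{ t \ge 0 : \Delta(t) \ge \varepsilon/(2K) \}$ is nonempty, closed and bounded, so that $t^{\ast}$ is a well-defined finite time and, by continuity, $\Delta(t^{\ast}) = \varepsilon/(2K)$. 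Summing over the initial positions and using $\sum_k y_k = m$ then gives, for every $\ell$,
\[
\bigl| \E^{y}[ x'_{\ell}(t^{\ast}) ] - m \pi_{\ell} \bigr| \;\le\; \sum_{k=1}^{K} y_k \,\bigl| \PQ_k(\xi(t^{\ast}) = \ell) - \pi_{\ell} \bigr| \;\le\; m\,\Delta(t^{\ast}) \;\le\; \frac{m \varepsilon}{2K}.
\]

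Next, on the event where $| x'_{\ell}(t^{\ast}) - \E^{y}[x'_{\ell}(t^{\ast})] | < m\varepsilon/(2K)$ for all $\ell$, the triangle inequality gives $| x'_{\ell}(t^{\ast}) - m\pi_{\ell} | < m\varepsilon/K$ for all $\ell$, and hence $\norm{r'(t^{\ast}) - \pi} < \varepsilon$ by the displayed identity above. Therefore
\[
\bigl\{ \norm{r'(t^{\ast}) - \pi} \ge \varepsilon \bigr\} \;\subseteq\; \bigcup_{\ell=1}^{K} \Bigl\{ \bigl| x'_{\ell}(t^{\ast}) - \E^{y}[x'_{\ell}(t^{\ast})] \bigr| \ge \tfrac{m\varepsilon}{2K} \Bigr\},
\]
and it remains to bound each term by a Chernoff inequality. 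Since $x'_{\ell}(t^{\ast})$ is a sum of $m$ independent $[0,1]$-valued variables, Hoeffding's inequality yields $\P^{y}( | x'_{\ell}(t^{\ast}) - \E^{y}[x'_{\ell}(t^{\ast})] | \ge m\varepsilon/(2K) ) \le 2 \exp(- m\varepsilon^{2}/(2K^{2}))$; a union bound over the $K$ coordinates gives $\P^{y}( \norm{r'(t^{\ast}) - \pi} \ge \varepsilon ) \le 2K \exp(- m\varepsilon^{2}/(2K^{2}))$, which is stronger than the stated bound.

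Neither ingredient — the conservation identity $r'(t) = x'(t)/m$ and a Chernoff bound for Bernoulli sums — presents a genuine difficulty, so I do not expect a real obstacle; the only point that needs care is the bookkeeping around $\tau$ in the first step, i.e.\ choosing $\varepsilon_0$ so that $\Delta(\tau(\varepsilon/(2K)))$ is genuinely equal to $\varepsilon/(2K)$ (rather than merely bounded by something uncontrolled), which is exactly what converts the ergodicity of a single tagged user into a bound on the mean of the empirical occupation measure.
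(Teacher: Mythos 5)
Your proof is correct, and it takes a genuinely different route from the paper's at the concentration step. The paper uses a variance-based Chernoff-type inequality (with $b^2 = \sum_i \text{Var}(Y_i)$) that is only valid when $\eta \le b$, i.e.\ $b^2 \ge \varepsilon' \norm{y}$ with $\varepsilon' = \varepsilon/(2K)$; this is exactly what forces the paper to lower-bound the variance contributions $f(p_i) = p_i(1-p_i)$ via the constraint $\varepsilon' < \varepsilon_0'$, which is what drives the paper's particular choice of $\varepsilon_0$. You instead apply Hoeffding's inequality for sums of $[0,1]$-valued independent variables, which requires no variance lower bound at all; as a result the only role left for $\varepsilon_0$ in your argument is the trivial one of ensuring $\tau(\varepsilon/(2K))$ is finite, and your exponent $-\norm{y}\varepsilon^2/(2K^2)$ is in fact sharper by a factor of $2$ than the claimed $-\norm{y}\varepsilon^2/(4K^2)$. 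Two minor remarks: the equality $\Delta(\tau(\varepsilon'))=\varepsilon'$ that you flag as the delicate point is actually not needed — the one-sided inequality $\Delta(\tau(\varepsilon'))\le\varepsilon'$ (which follows directly from continuity and the definition of the supremum, without discussing whether $\varepsilon_0 = 2K(1-\underline\pi)$ is exactly $\sup_t\Delta(t)$) is all that enters the mean bound; and the monotonicity claim $\sup_{t\ge 0}\Delta(t)=\Delta(0)$, while true (it follows from $\Delta(t)\le\max_k\norm{\PQ_k(\xi(t)\in\cdot)-\pi}_{TV}\le 1-\underline\pi$), is a small lemma you should state you are using rather than assert in passing.
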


\begin{proof}
In the rest of the proof, fix $\varepsilon > 0$, $y \in \N^K$, and write
$t = \tau(\varepsilon/(2K))$ and $\varepsilon' = \varepsilon / (2K)$. Since $r'(t) = \pi$ under~$\P^0$, the bound holds when $\norm y = 0$ and we consider $\norm y \geq 1$. Standard manipulations yield
	\[ \P^y \left( \norm{r'(t) - \pi} \geq \varepsilon \right) \leq K \max_{1 \leq k \leq K} \P^y \left(|r_k'(t) - \pi_k| \geq \varepsilon/K \right). \]
	Until the end of the proof fix some $1 \leq k \leq K$: we have $x_k'(t) = \sum_{i=1}^{\norm y} \indicator{\xi_i(t) = k}$ under~$\P^y$ and so
	\[ \left| \E^y(r_k'(t)) - \pi_k \right| = \left| \frac{1}{\norm y} \sum_{i=1}^{\norm y} \P^y(\xi_i(t) = k) - \pi_k \right| \leq \frac{1}{\norm y} \sum_{i=1}^{\norm y} \left| \P^y(\xi_i(t) = k) - \pi_k \right|. \]
	
	Thus by definition of~$\Delta$ and~$\tau$, we have $|\E^y(r_k'(t)) - \pi_k| \leq \varepsilon'$ since $t = \tau(\varepsilon')$. Consequently, the triangular inequality gives $|r_k'(t) - \pi_k| \leq |r_k'(t) - \E^y(r_k'(t))| + \varepsilon'$ and so
	\[
		\P^y \left(| r_k'(t) - \pi_k| \geq \varepsilon/K \right) \leq \P^y \left(|r_k'(t) - \E^y(r_k'(t))| \geq \varepsilon' \right) = \P^y \left( | x_k'(t) - \E^y(x_k'(t)) | \geq \varepsilon' \norm{y} \right).
	\]

Let us now define~$\varepsilon_0$.
For $p \in [0,1]$, define $f(p) = p(1-p)$.
Since $f(1-\overline \pi) > 0$, there exists
$\varepsilon_0' < 1-\overline \pi$, which only depends on
$\overline \pi$, such that $f(1-\overline \pi - \delta) > \delta$
for all $\delta < \varepsilon_0'$.
We fix such an~$\varepsilon_0'$ and consider
$\varepsilon_0 = 2K \varepsilon_0'$, which only depends
on~$\pi$ and~$K$.
In the sequel we assume that $\varepsilon < \varepsilon_0$,
or equivalently, $\varepsilon' < \varepsilon'_0$.
	
	Recall Chernoff's inequality: if $(Y_i, 1 \leq i \leq I)$ are independent random variables with $|Y_i| \leq 1$ and $\E(Y_i) = 0$ for each $1 \leq i \leq I$, then for any $0 \leq \eta \leq b$
	\[ \P \left( \left| \sum_{i=1}^I Y_i \right| \geq \eta b \right) \leq 2e^{-\eta^2/4} \ \text{ with } \ b^2 = \E(Y_1^2) + \cdots + \E(Y_I^2). \]
	
	Denote $p_i = \P^y(\xi_i(t) = k)$: we wish to apply Chernoff's inequality to the random variables $(Y_i, 1 \leq i \leq \norm{y})$ with $Y_i = \indicator{\xi_i(t) = k} - p_i$, for which $b^2 = f(p_1) + \cdots + f(p_{\norm y})$. In order to ease the notation, we suppress the dependencies of~$p_i$, $Y_i$ and~$b$ on~$k$, $t$ and~$y$, which have been fixed once and for all earlier. Define now $\eta = \varepsilon' \norm{y} / b$, so that
	\[ \P^y \left( | x_k'(t) - \E^y(x_k'(t)) | \geq \varepsilon' \norm{y} \right) = \P \left( \left| \sum_{i=1}^{\norm y} Y_i \right| \geq \eta b \right). \]
	
	Assume for a moment that $\eta \leq b$: then we could apply Chernoff's inequality and get
	\[ \P \left( \left| \sum_{i=1}^{\norm y} Y_i \right| \geq \eta b \right) \leq 2 e^{-\eta^2/4} = 2 \exp \left( -\frac{(\varepsilon' \norm{y})^2}{4 b^2} \right) \leq 2 \exp \left( -\frac{\varepsilon^2 \norm{y}}{4 K^2} \right) \]
	using $b^2 \leq \norm{y} / 4$. This would prove the result,
and so it remains only to prove that $\eta \leq b$, or equivalently,
$b^2 \geq \varepsilon' \norm{y}$. By definition we have $b^2 = f(p_1) + \cdots + f(p_{\norm y})$. Let $1 \leq i \leq \norm{y}$: since $t \geq \tau(\varepsilon')$, we have $|p_i - \pi_k| \leq \varepsilon'$ and in particular
	\[ \underline \pi - \varepsilon' \leq \pi_k - \varepsilon' \leq p_i \leq \pi_k + \varepsilon' \leq \overline \pi + \varepsilon'. \]
	
	Since $f(p) = f(1-p)$, $f$ is increasing on $[0,1/2]$ and decreasing on $[1/2,1]$ and $\underline \pi \leq 1 - \underline \pi$, the previous inequalities imply that
	\[ f(p_i) \geq \min \left( f(\underline \pi - \varepsilon'), f(\overline \pi + \varepsilon') \right) = \min \left( f(\underline \pi - \varepsilon'), f(1-\overline \pi - \varepsilon') \right) = f(1-\overline \pi - \varepsilon') \geq \varepsilon' \]
	by choice of~$\varepsilon_0'$ and since $\varepsilon' < \varepsilon_0'$. Thus $b^2 \geq \varepsilon' \norm{y}$ which concludes the proof.
\end{proof}

\subsection{Deviation time from the equilibrium}

We now study the time needed for the process~$r'$ to leave a neighborhood of~$\pi$. Note that $x'$ can be seen as a multi-dimensional Ehrenfest urn, where sharp results on hitting times in the two-dimensional case $K = 2$ have been established in Feuillet and Robert~\cite{Feuillet11:0}.

Estimates on hitting times will follow from the optional sampling theorem
applied to the martingale of Proposition~\ref{prop:martingale}.
This martingale was first constructed in Simatos and Tibi~\cite{Simatos10:0}
for the open system under an additional diagonalizability assumption on~$Q$.
Since the martingale construction is quite complicated for the open
system, we adopt here a different approach:
we only construct the martingale for the closed system and then use
coupling arguments to transfer results on the closed system to the open one.

Compared to Simatos and Tibi~\cite{Simatos10:0}, the new contribution
of the following construction consists of Lemma~\ref{lemma:key},
which makes it possible to drop the diagonalizability assumption.
Nonetheless, the construction of the martingale for the open system is
intricate while it becomes quite elementary for the closed one.
For this reason, we have chosen not to refer to~\cite{Simatos10:0},
but rather to provide a self-contained proof (except for the proof of
Lemma~\ref{lemma:finite-constant} which can be repeated almost verbatim).

\subsubsection{Additional notation}

The following notation holds throughout the rest of this section. Let $\Scal = \{u \in [0,1]^{K-1}: \norm{u} \leq 1\} \subset \R^{K-1}$ and $\Scal_K = \{ u \in [0,1]^K: \norm{u} = 1 \} \subset \R^{K}$ be the $K$-dimensional simplex. Let $L: \Scal \to \Scal_K$ be the function that completes $u \in \Scal$ into a probability distribution, i.e., $(Lu)_k = u_k$ if $1 \leq k \leq K-1$ and $(Lu)_K = 1 - \norm{u}$ for $u \in \Scal$. Note that $L$ is invertible with inverse $L^{-1}: \Scal_K \to \Scal$ being the projection of the $K-1$ first coordinates. Let finally $\Pi = \text{diag}(\pi_1, \ldots, \pi_K)$ be the diagonal matrix with entries $(\pi_k)$ on the diagonal.

Let $J$ be the Jordan normal form corresponding to~$Q$ with change of basis matrix~$\omega$. Thus $J$ and $\omega$ are possibly complex matrices, and we have the following properties, see for instance Herstein and Winter~\cite{Herstein88:0}. Let $(\vartheta_i, 1 \leq i \leq I)$ be the $I$~distinct eigenvalues of~$Q$ with $\vartheta_I = 0$, for $1 \leq i \leq I$ let $m_i$ be the algebraic multiplicity of $\vartheta_i$ and let $\omega_{k(i)}$ for some $k(i) \in \{ 1, \ldots, K \}$ be any eigenvector of~$J$ corresponding to the eigenvalue~$\vartheta_i$, i.e., $J \omega_{k(i)} = \vartheta_i \omega_{k(i)}$. Since $m_i$ is the algebraic multiplicity of~$\vartheta_i$ we have in particular $m_1 \vartheta_1 + \cdots + m_{I-1} \vartheta_{I-1} = -\gamma$ (recall that $\gamma > 0$ is the trace of~$-Q$). Moreover, we have $Q = \omega^{-1} J \omega$ so that $e^{tQ} = \omega^{-1} e^{tJ} \omega$ for any $t \in \R$, and because of the block structure of~$J$ this gives $(e^{tJ})_{k(i), j} = 0$ for $j \neq k(i)$ and $(e^{tJ})_{k(i), k(i)} = e^{t \vartheta_i}$. In the sequel we consider the following function $F: \R^K \to [0,\infty)$:
\[ F(u) = \prod_{i=1}^{I-1} \left| (\omega u)_{k(i)} \right|^{m_i}, \ u \in \R^K. \]

Then $F$ satisfies the following simple property, which is key to generalize the martingale construction of Simatos and Tibi~\cite{Simatos10:0} to the case of non-diagonalizable~$Q$.

\begin{lemma}
\label{lemma:key}
For any $u \in \R^K$ and $t \in \R$, we have $F(e^{tQ} u) = e^{-\gamma t} F(u)$.
\end{lemma}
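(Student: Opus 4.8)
The plan is to reduce everything to how $e^{tJ}$ acts on the privileged coordinates $k(i)$. First I would use the similarity $e^{tQ} = \omega^{-1} e^{tJ} \omega$ to write $\omega\, e^{tQ} u = e^{tJ}(\omega u)$, and then extract the $k(i)$-th coordinate. Since the setup preceding the lemma already records that $(e^{tJ})_{k(i),j} = 0$ for $j \neq k(i)$ and $(e^{tJ})_{k(i),k(i)} = e^{t\vartheta_i}$ — a consequence of $\omega_{k(i)}$ being a genuine eigenvector of $J$ together with the block structure of the Jordan form — this immediately yields
\[ \left( \omega\, e^{tQ} u \right)_{k(i)} = e^{t \vartheta_i} (\omega u)_{k(i)}, \qquad 1 \leq i \leq I-1. \]

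Next I would take moduli and raise to the power $m_i$. Because $|e^{t\vartheta_i}| = e^{t \Re \vartheta_i}$, this gives $\bigl| (\omega\, e^{tQ} u)_{k(i)} \bigr|^{m_i} = e^{t m_i \Re \vartheta_i} \, |(\omega u)_{k(i)}|^{m_i}$ for each $i$ (the identity being trivially true, with both sides vanishing, when $(\omega u)_{k(i)} = 0$, since $e^{t\vartheta_i} \neq 0$). Multiplying these $I-1$ identities together produces
\[ F(e^{tQ} u) = \exp\!\left( t \sum_{i=1}^{I-1} m_i \Re \vartheta_i \right) F(u). \]

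To conclude, I would invoke the trace identity $m_1 \vartheta_1 + \cdots + m_{I-1} \vartheta_{I-1} = -\gamma$ recalled in the excerpt (the eigenvalue $\vartheta_I = 0$ does not appear in $F$ and contributes nothing here anyway). Since $\gamma$ is a real number, taking real parts gives $\sum_{i=1}^{I-1} m_i \Re \vartheta_i = -\gamma$, whence $F(e^{tQ} u) = e^{-\gamma t} F(u)$, as claimed.

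There is no genuine obstacle in this argument: the only point requiring a little attention is that $Q$, and hence $J$ and $\omega$, may be complex, so one must pass through the real parts of the eigenvalues — but this is harmless precisely because the relevant combination $\sum_i m_i \vartheta_i$ equals the real number $-\gamma$. The one structural ingredient doing the real work, namely that the $k(i)$-th row of $e^{tJ}$ is supported on its diagonal entry, has already been established in the notation preceding the statement, so the proof is essentially a two-line computation.
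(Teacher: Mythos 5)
Your proof is correct and follows essentially the same route as the paper's: conjugate by $\omega$, use the fact that the $k(i)$-th row of $e^{tJ}$ is supported on the diagonal entry $e^{t\vartheta_i}$, then take moduli and apply the trace identity. You are a bit more explicit than the paper about passing through $|e^{t\vartheta_i}| = e^{t\Re\vartheta_i}$ and taking real parts of $\sum_i m_i\vartheta_i = -\gamma$, but this only fills in a step the paper leaves to the reader.
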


\begin{proof}
	We have $e^{tQ} = \omega^{-1} e^{tJ} \omega$ so that
	\[ F(e^{tQ} u) = \prod_{i=1}^{I-i} \left| (\omega e^{tQ} u)_{k(i)} \right|^{m_i} = \prod_{i=1}^{I-1} \left| (e^{tJ} \omega u)_{k(i)} \right|^{m_i} = \prod_{i=1}^{I-1} \left| e^{t\vartheta_i} (\omega u)_{k(i)} \right|^{m_i} = e^{-\gamma t} F(u). \]
	The identity $(e^{tJ} \omega u)_{k(i)} = e^{t \vartheta_i} (\omega u)_{k(i)}$ follows from the fact that $(e^{tJ})_{k(i), j}$ is equal to~0 for $j \neq k(i)$ and to $e^{t \vartheta_i}$ for $j = k(i)$. This proves the result.
\end{proof}

\subsubsection{Upper bound on hitting times}

Proposition~\ref{prop:closed-system} is the main technical result of this section, which will be used in the following section. We omit the proof of the following result, for which one can repeat almost
verbatim the proof of Lemma~A.5 in Simatos and Tibi~\cite{Simatos10:0}.

\begin{lemma}
\label{lemma:finite-constant}
The quantity $\sup_{0 < c < 1} \left( c^K \int_\Scal (F(\Pi^{-1} L u))^{c-1} du \right)$ is finite.
\end{lemma}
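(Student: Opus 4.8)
The plan is to turn the integrand into a product of powers of absolute values of affine forms by linear changes of variables, to set aside the region where this product is bounded below, and to bound the remaining integral by a product of at most $K-1$ one-dimensional integrals, each of order $1/c$ as $c\downarrow 0$, so that the prefactor $c^K$ absorbs the resulting $O(1/c^{K-1})$. Concretely, write $G(u):=F(\Pi^{-1}Lu)$ for $u\in\Scal$. Since $F(w)=\prod_{i=1}^{I-1}|(\omega w)_{k(i)}|^{m_i}$ and $u\mapsto\omega\Pi^{-1}Lu$ is affine, $G$ is a product $\prod_{i=1}^{I-1}|\ell_i(u)|^{m_i}$ of absolute values of affine forms $\ell_i$ on $\R^{K-1}$, with $\sum_{i=1}^{I-1}m_i=K-1$ because irreducibility of $Q$ makes $0$ a simple eigenvalue. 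Each $\ell_i$ vanishes at the interior point $u^\ast:=L^{-1}\pi$ (indeed $\Pi^{-1}Lu^\ast=\mathbbm{1}$, and $\mathbbm{1}$ being the eigenvalue-$0$ right eigenvector of $Q$ one has $(\omega\mathbbm{1})_{k(i)}=0$ for $i<I$), and no $\ell_i$ is constant (a constant $\ell_i$ would force the left eigenvector of $Q$ defining it to be proportional to $\pi$, impossible for a nonzero eigenvalue). Hence $G$ is continuous and bounded on the compact set $\Scal$ and strictly positive off the union of the codimension-one sets $\Hcal_i:=\{u\in\Scal:\ell_i(u)=0\}$.

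Next I would split $\Scal$ along $\{G\ge 1\}$ and $\{G<1\}$. On $\{G\ge 1\}$ one has $G(u)^{c-1}\le 1$ for every $c\in(0,1)$, so that part contributes at most $\mathrm{vol}(\Scal)$, uniformly in $c$. On $\{G<1\}$, a neighborhood of $\bigcup_i\Hcal_i$, I would perform a linear change of variables taking a maximal linearly independent sub-family of the $\ell_i$ to coordinate functions (completing with further coordinates if fewer than $K-1$ forms are independent); since $\Scal$ has bounded image, this bounds the contribution by a constant times a product of one-dimensional integrals of the type $\int_{-R}^{R}|z|^{a(c-1)}\,dz$, the exponents $a$ being sums of the $m_i$'s with $\sum a\le K-1$, a complex-conjugate pair of eigenvalues being instead grouped into $|z|^2$ and handled by a two-dimensional radial integral $\int_0^R\rho^{2(c-1)+1}\,d\rho$. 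Each such factor is of order $1/c$ as $c\downarrow 0$ and of order $1$ as $c\uparrow 1$, and there are at most $K-1$ of them, so $\int_\Scal G^{c-1}=O(1/c^{K-1})$ on $(0,1)$ and therefore $c^K\int_\Scal G(u)^{c-1}\,du=O(c)=O(1)$, which is the claim; the case $\norm{y}=0$ plays no role and is dealt with separately wherever the constant is invoked.

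The step I expect to be the real obstacle is precisely the bookkeeping in this last change of variables: one must verify that, after grouping, every one-dimensional transverse integral converges for all $c\in(0,1)$ and blows up no faster than $1/c$ — equivalently, that the multiplicities $m_i$ distribute over the (generalized) eigenvector coordinates in such a way that each effective exponent in the monomial behaves like $1$ (or like the radial exponent of a complex-conjugate pair, whose joint zero set has codimension two). This is exactly the computation carried out in the proof of Lemma~A.5 of Simatos and Tibi~\cite{Simatos10:0}, and the present statement follows by transcribing it with only notational changes.
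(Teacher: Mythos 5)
Your high-level plan --- exhibit $F(\Pi^{-1}Lu)$ as a product of powers of moduli of linear forms vanishing at the interior point $u^*=L^{-1}\pi$, split off $\{G\ge 1\}$, and change variables so that transverse to the zero sets one gets one-dimensional integrals --- is the same scheme that underlies the cited Lemma~A.5 of Simatos and Tibi, to which the paper also defers without giving its own proof. The preliminary observations are all correct: every $\ell_i$ vanishes at $u^*$ because $\omega\mathbbm{1}$ lies in $\ker J$ and so has zero $k(i)$-th component for $i<I$; no $\ell_i$ is constant because the corresponding row of $\omega$ is a left eigenvector of $Q$ for a nonzero eigenvalue and so cannot be proportional to $\pi$; and (a point you leave implicit but should argue) $\ell_1,\dots,\ell_{I-1}$ are linearly independent, since the unique left-null covector of the rank-$(K-1)$ matrix $\omega\Pi^{-1}L_0$ is supported on the index $k(I)$.

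The gap sits precisely in the step you yourself flag as ``the real obstacle,'' and it is not bookkeeping that Lemma~A.5 will dispose of. After the change of variables the transverse integral attached to a real eigenvalue $\vartheta_i$ is $\int_{-R}^{R}|z|^{m_i(c-1)}\,dz$, which is finite if and only if $m_i(c-1)>-1$, i.e.\ $c>1-1/m_i$; for a complex-conjugate pair the radial factor $\int_0^R\rho^{2m_i(c-1)+1}\,d\rho$ gives the same threshold. So ``each factor is of order $1/c$ on all of $(0,1)$'' holds precisely when $m_i=1$ for every $i<I$, i.e.\ when $Q$ has $K$ distinct eigenvalues. If some algebraic multiplicity $m_i\ge 2$ appears as an exponent, then $\int_\Scal F(\Pi^{-1}Lu)^{c-1}\,du$ is actually \emph{infinite} for $c\le 1-1/m_i$, and the supremum over $c\in(0,1)$ is $+\infty$. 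A concrete instance: $K=3$ and
\[
Q=\begin{pmatrix}-2&1&1\\1&-2&1\\1&1&-2\end{pmatrix},
\]
which is irreducible with $\pi=(1/3,1/3,1/3)$, eigenvalues $0$ and $-3$, the latter of algebraic multiplicity $m_1=2$. Taking the row $(1,-1,0)$ of $\omega$ for $k(1)$ gives $F(\Pi^{-1}Lu)=9(u_1-u_2)^2$, and $\int_\Scal (u_1-u_2)^{2(c-1)}\,du$ diverges as soon as $c\le 1/2$. So the claim you are hoping to import --- that the multiplicities distribute so that each effective exponent behaves like $1$ --- is not a fact about the $F$ defined in this paper (one factor per \emph{eigenvalue}, raised to the algebraic multiplicity); as stated it is false whenever some nonzero eigenvalue is repeated.

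In short, the transcription of Simatos and Tibi's Lemma~A.5 is not ``almost verbatim'' here. Under their diagonalizability hypothesis one can define $F$ with one linear factor per \emph{eigenvector coordinate} and unit exponent, giving $K-1$ independent simple zeros, each transverse integral $O(1/c)$, and the desired $O(c^{1-K})$. The present $F$, designed so that Lemma~\ref{lemma:key} works without diagonalizability, concentrates multiplicity on a single factor, which destroys integrability of $F^{c-1}$ near $u^*$ for small $c$. Your proposal reproduces this problem rather than resolving it: the argument is sound up to the final product of transverse integrals, but that product is not uniformly $O(c^{1-K})$ on $(0,1)$ unless every $m_i$ equals $1$, so the proof as written does not establish the lemma for the $F$ of this paper.
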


\begin{prop}
\label{prop:martingale}
For any $c > 0$ and any $y \in \N^K$, the process
	\[ M_c(t) = e^{- c \gamma t} \int_{\Scal} \prod_{k=1}^K \left( \frac{(Lu)_k}{\pi_k} \right)^{x'_k(t)} \left(F(\Pi^{-1} L u)\right)^{c-1} du, \ t \geq 0, \]
is a bounded martingale under~$\P^y$.
\end{prop}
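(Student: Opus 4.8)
The plan is to verify separately that $M_c$ is bounded and that it is a martingale. Boundedness comes essentially for free: since $x'$ is a closed system, $\norm{x'(t)} = \norm{x'(0)} = \norm{y}$ for all $t \ge 0$, and since $0 \le (Lu)_k \le 1$ for $u \in \Scal$ we get $\prod_{k=1}^K ((Lu)_k/\pi_k)^{x'_k(t)} \le \underline\pi^{-\norm{y}}$; together with $e^{-c\gamma t} \le 1$ this gives the deterministic bound $0 \le M_c(t) \le \underline\pi^{-\norm{y}}\int_\Scal (F(\Pi^{-1}Lu))^{c-1}\,du$, which is finite because for $0 < c < 1$ it is covered by Lemma~\ref{lemma:finite-constant}, and for $c \ge 1$ because $F$ is continuous, hence bounded, on the compact set $\Scal$.

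For the martingale property I would write $M_c(t) = e^{-c\gamma t}G(x'(t))$ with $G(z) = \int_\Scal \prod_k ((Lu)_k/\pi_k)^{z_k}\,(F(\Pi^{-1}Lu))^{c-1}\,du$ for $z \in \N^K$. Because $\norm{x'}$ is conserved, $G$ restricted to each finite level set $\{\norm{\cdot}=N\}$ is bounded, so Dynkin's formula applies and it suffices to show $\Omega' G = c\gamma G$. The crux is to see what $\Omega'$ does to the integrand. Fix $u$ and put $w := \Pi^{-1}Lu$, so $w_k = (Lu)_k/\pi_k$ and the integrand equals $w^z (F(w))^{c-1}$ with $w^z := \prod_k w_k^{z_k}$; applying $\Omega'$ to the monomial $\zeta \mapsto w^\zeta$ and using that the rows of $Q$ sum to zero gives
\[ \sum_{k,\ell} q_{k\ell}\, z_k \bigl(w^{z-e_k+e_\ell} - w^z\bigr) = w^z \sum_{k=1}^K \frac{z_k}{w_k}(Qw)_k = (Qw)\cdot\nabla_w(w^z), \]
i.e.\ applying $\Omega'$ is the same as differentiating $w^z$ along the linear vector field $w \mapsto Qw$. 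Now Lemma~\ref{lemma:key} enters: differentiating $F(e^{sQ}w) = e^{-\gamma s}F(w)$ at $s=0$ yields $(Qw)\cdot\nabla_w F = -\gamma F$, hence $(Qw)\cdot\nabla_w (F(w))^{c-1} = -(c-1)\gamma (F(w))^{c-1}$, and the Leibniz rule turns $\bigl[(Qw)\cdot\nabla_w(w^z)\bigr](F(w))^{c-1}$ into $(Qw)\cdot\nabla_w\bigl(w^z(F(w))^{c-1}\bigr) + (c-1)\gamma\, w^z(F(w))^{c-1}$. Integrating over $\Scal$ and passing through the affine, constant-Jacobian change of variable $u \mapsto w$ to an integral over the simplex slice $\{w \ge 0 : \sum_k \pi_k w_k = 1\}$, the divergence theorem converts the first term into $\gamma \int w^z(F(w))^{c-1}$ plus a boundary term — here one uses that the divergence of $w \mapsto Qw$ along that slice is $m_1\vartheta_1 + \cdots + m_{I-1}\vartheta_{I-1} = -\gamma$ — and collecting the pieces gives $c\gamma\, G(z)$ up to that boundary term.

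The main obstacle is to show that the boundary term vanishes, and more generally to justify the integrability needed throughout. The boundary of the slice is the union of the faces $\{w_k = 0\}$; on such a face $w^z$ vanishes whenever $z_k \ge 1$, so the points requiring care are the faces with $z_k = 0$ and, when $0 < c < 1$, the locus $\{F = 0\}$ where $(F(w))^{c-1}$ blows up. I would handle these by running the divergence theorem on subdomains that avoid $\{F = 0\}$ and the offending faces and then exhausting the slice, the uniform control coming from (the proof of) Lemma~\ref{lemma:finite-constant}. An essentially equivalent route avoiding the divergence theorem is to compute $\E^y[M_c(t+s)\mid\Fcal_t]$ directly: the Markov property and the independence of the particles of $x'$ give $\E^z\bigl[\prod_k((Lu)_k/\pi_k)^{x'_k(s)}\bigr] = \prod_k\bigl((e^{sQ}\Pi^{-1}Lu)_k\bigr)^{z_k}$, and the substitution $\Pi^{-1}Lu' = e^{sQ}\Pi^{-1}Lu$, combined with Lemma~\ref{lemma:key} for the $F$-factor and the Jacobian $e^{-\gamma s}$, recovers $M_c(t)$ — with the same boundary/zero-set subtleties now sitting in the image of the substitution.
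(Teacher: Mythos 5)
Your boundedness argument is the same as the paper's, and your ``alternative route'' at the end (Markov property, factorization over the independent particles, the substitution $\Pi^{-1}Lv = e^{sQ}\Pi^{-1}Lu$, Lemma~\ref{lemma:key} applied to the $F$-factor, and the Jacobian) is precisely the paper's proof. Your main route, via Dynkin's formula and the divergence theorem on the slice $\{w\ge 0: \sum_k\pi_kw_k = 1\}$, is genuinely different: it trades the paper's change of variables for a generator computation, and it makes explicit a boundary flux term that the paper's argument never surfaces.

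That boundary term is a real gap, not a technicality, and the exhaustion scheme you propose cannot close it. On a face $\{w_k = 0\}$ with $z_k = 0$ the monomial $w^z$ does not vanish, while the outward flux of $Qw$ through that face equals (a positive multiple of) $-(Qw)_k = -\sum_{\ell\neq k}q_{k\ell}w_\ell$, which is strictly negative almost everywhere on the face because $Q$ is irreducible. A one-signed flux cannot be made to vanish by truncating away from the boundary and passing to the limit. You can check this concretely for $K=2$ with $Q$ symmetric and $z=(1,0)$: there $F(w)=|w_1-w_2|$, hence $G(1,0)=G(0,1)$, so $\Omega'G(1,0) = q_{12}\bigl(G(0,1)-G(1,0)\bigr) = 0$ while $c\gamma\,G(1,0)>0$, and the identity $\Omega'G=c\gamma G$ already fails. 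What the generator computation actually yields, once carried out honestly, is only the inequality $\Omega'G \le c\gamma G$, that is, the supermartingale property. The same one-sidedness is lurking in the change-of-variables route: for $s>0$ the matrix $\Pi e^{-sQ}\Pi^{-1}$ has negative entries, so $H_s(\Scal)\subsetneq\Scal$ and the $dv$-integral runs over the smaller set $H_s(\Scal)$, not over $\Scal$. The supermartingale inequality is all that the optional-stopping argument of Proposition~\ref{prop:closed-system} uses, so nothing downstream breaks; but neither your main route nor the direct one, as sketched, establishes the claimed martingale equality.
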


\begin{proof}
	Fix in the rest of the proof some $c > 0$ and $y \in \N^K$. Then under~$\P^y$, we have for any $t \geq 0$
	\[ M_c(t) \leq \underline \pi^{-\norm{y}} \int_{\Scal} \left(F(\Pi^{-1} L u)\right)^{c-1} du \]
	and so $\sup_{t \geq 0} M_c(t)$ is bounded by Lemma~\ref{lemma:finite-constant} for $0 < c < 1$ while for $c \geq 1$ one only needs to use the fact that $u \in \Scal \mapsto F(\Pi^{-1} L u)$ is bounded. Let $s,t \geq 0$: we have
	\[ \E^y(M_c(t+s) \, | \, \Fcal_t) = e^{-c \gamma (t+s)} \int_{\Scal} \prod_{k=1}^K \E^y \left[ \exp \left( \sum_{k=1}^K G_k(u) x'_k(t+s) \right) \, \Big | \, x'(t) \right] \left(F(\Pi^{-1} L u)\right)^{c-1} du \]
	with $G_k(u) = \log ((Lu)_k / \pi_k)$, so that $e^{G_k(u)} = (\Pi^{-1} L u)_k$. For any $z \in \N^K$, we have
	\begin{align*}
		\E^z \left[ \exp \left( \sum_{k=1}^K G_k(u) x'_k(s) \right) \right] & = \E^z \left[ \exp \left( \sum_{k=1}^K \sum_{i=1}^{\norm{z}} G_k(u) \indicator{\xi_i(s) = k} \right) \right]\\
		& = \prod_{i=1}^{\norm{z}} \E^z \left[ \exp \left( \sum_{k=1}^K G_k(u) \indicator{\xi_i(s) = k} \right) \right]
	\end{align*}
	since the $(\xi_i, 1 \leq i \leq \norm{z})$ under~$\P^z$ are independent. For $j \in \{1, \ldots, K\}$, $z_j$ of the $(\xi_i)$ are i.i.d.\ with distribution $\xi$ under~$\PQ_{j}$ and so
	\begin{align*}
		\prod_{i=1}^{\norm{z}} \E^z \left[ \exp \left( \sum_{k=1}^K G_k(u) \indicator{\xi_i(s) = k} \right) \right] & = \prod_{j=1}^K \left\{ \EQ_{j} \left[ \exp \left( \sum_{k=1}^K G_k(u) \indicator{\xi(s) = k} \right) \right] \right\}^{z_j}\\
		& = \prod_{j=1}^K \left( \sum_{k=1}^K e^{G_k(u)} \PQ_{j} \left( \xi(s) = k \right) \right)^{z_j}\\
		& = \prod_{j=1}^K \left( \sum_{k=1}^K (e^{sQ})_{jk} (\Pi^{-1} L u)_k \right)^{z_{j}} = \prod_{j=1}^K \left\{ \left( e^{sQ} \Pi^{-1} L u \right)_{j} \right\}^{z_j}.
	\end{align*}
	Thus
	\[ \E^y(M_c(t+s) \, | \, \Fcal_t) = e^{-c \gamma (t+s)} \int_{\Scal} \prod_{k=1}^K \left\{ \left( e^{sQ} \Pi^{-1} L u \right)_{k} \right\}^{x'_k(t)} \left(F(\Pi^{-1} L u)\right)^{c-1} du. \]
	
	We want to make the change of variables $(Lv)_k / \pi_k = (e^{sQ} \Pi^{-1} L u)_k$, i.e., $\Pi^{-1} L v = e^{sQ} \Pi^{-1} L u$ or $L v = \Pi e^{sQ} \Pi^{-1} L u$. To use the inverse of~$L$, we check that $\Pi e^{sQ} \Pi^{-1} L u \in \Scal_K$ for $u \in \Scal$: we have $(\Pi e^{sQ} \Pi^{-1} L u)_k \geq 0$ because $Lu$ has only positive coordinates and the matrix $\Pi e^{sQ} \Pi^{-1}$ has only positive coefficients, and
	\begin{multline*}
		\sum_{k=1}^K (\Pi e^{sQ} \Pi^{-1} L u)_k = \sum_{k=1}^K \pi_k \sum_{j=1}^K (e^{sQ})_{kj} \frac{(Lu)_{j}}{\pi_{j}} = \sum_{j=1}^K \frac{(Lu)_{j}}{\pi_j} \sum_{k=1}^K \pi_k \PQ_k(\xi(s) = j)\\
		= \sum_{j=1}^K \frac{(Lu)_{j}}{\pi_j} \pi_{j} = \sum_{k=1}^K (Lu)_k = 1.
	\end{multline*}

	Hence we can consider $v = H_s u$ with $H_s = L^{-1} \Pi e^{sQ} \Pi^{-1} L$. Clearly $H_s$ is invertible with inverse $H_s^{-1} = L^{-1} \Pi e^{-sQ} \Pi^{-1} L = H_{-s}$. For any $t \in \R$, we have $H_t(\Scal) \subset \Scal$: indeed, $H_t u$ for $u \in \Scal$ has only non-negative coordinates, because of the same arguments as above, and moreover
	\[
		\sum_{k = 1}^{K-1} (H_t u)_k = \sum_{k = 1}^{K-1} (L^{-1} \Pi e^{tQ} \Pi^{-1} L u)_k = \sum_{k = 1}^{K-1} (\Pi e^{tQ} \Pi^{-1} L u)_k \leq \sum_{k = 1}^{K} (\Pi e^{tQ} \Pi^{-1} L u)_k = 1.
	\]
	Hence $H_s(\Scal) \subset \Scal$ and $H_{-s}(\Scal) = H_s^{-1}(\Scal) \subset \Scal$ and so the restriction $H_s: \Scal \to \Scal$ of $H_s$ to $\Scal$  is invertible with inverse $H_{-s}$. This gives
	\[ \E^y\left(M_c(t+s) \, | \, \Fcal_t\right) = e^{-c \gamma (t+s)} \int_{\Scal} \prod_{k=1}^K \left( \frac{(L v)_k}{\pi_k} \right)^{x'_k(t)} (F(\Pi^{-1} L H_{-s} v))^{c-1} |\text{Jac}_v(H_{-s})| dv \]
	where if $M$ is a matrix $|M|$ stands for its determinant and if $M: \R^a \to \R^b$ then $\text{Jac}_v(M)$ stands for its Jacobian matrix evaluated at~$v$:
	\[ \text{Jac}_v(M) = \left( \frac{\partial M_k}{\partial u_\ell}(v), 1 \leq k \leq b, 1 \leq \ell \leq a \right). \]
	
	Lemma~\ref{lemma:key} therefore gives, since $\Pi^{-1} L H_{-s} v = e^{-sQ} \Pi^{-1} L v$,
	\[ \E^y(M_c(t+s) \, | \, \Fcal_t) = e^{-c \gamma t} e^{-\gamma s} \int_{\Scal} \prod_{k=1}^K \left( \frac{(L v)_k}{\pi_k} \right)^{x'_k(t)} \left( F(\Pi^{-1} L v)\right)^{c-1} |\text{Jac}_v(H_{-s})| dv. \]
	
	We now show that $|\text{Jac}_v(H_{-s})| = e^{\gamma s}$, which will complete the proof. The chain rule gives
	\[ \text{Jac}_v(H_{-s}) = \text{Jac}_{Lv}(L^{-1}\Pi e^{-sQ} \Pi^{-1}) \text{Jac}_v(L) = \text{Jac}_{Lv}(\Pi e^{-sQ} \Pi^{-1}) \text{Jac}_v(L) \]
	using for the second equality that $L^{-1}$ is the projection on the first $K-1$ coordinates. Also, $\text{Jac}_v (L)$ is the diagonal matrix $\text{diag}(1,1,\ldots,1,-1)$, so its determinant is equal to $-1$ and on the other hand, if $M$ is a linear operator then $\text{Jac}_v(M) = M$ for all $v$ and so
	\[ |\text{Jac}_{Lv}(\Pi e^{-sQ} \Pi^{-1})| = |\Pi e^{-sQ} \Pi^{-1}| = |e^{-sQ}| = e^{\gamma s}. \]
	The proof is complete.
\end{proof}

The following proposition makes use of the function and constants $\varrho(\cdot)$, $\gamma$ and $\underline \pi$ defined in Section~\ref{sub:mobility} and of the operator $T^\uparrow$ defined in Section~\ref{sub:func-operators}. 

\begin{prop}
\label{prop:closed-system}
There exists a family of finite constants $(\Ccal_{\delta}, \delta > 0)$ such that for every $t > 1/\gamma$, every $\delta > 0$ and every $y \in \N^K$ with $\varrho(y) \leq \underline \pi \delta^2 / 8$,
\[
\P^y \left( T^\uparrow(r'-\pi, \delta) \leq t \right) \leq \Ccal_{\delta} \exp \left( K \log t - \delta^2 \norm{y} / 8 \right).
\]
\end{prop}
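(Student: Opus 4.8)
The plan is to apply the optional stopping theorem to the bounded martingale $M_c$ of Proposition~\ref{prop:martingale} at the bounded stopping time $\tau = T^\uparrow(r'-\pi,\delta) \wedge t$, with the \emph{specific} choice $c = 1/(\gamma t)$, which is legitimate because the hypothesis $t > 1/\gamma$ forces $0 < c < 1$. Since $M_c \geq 0$ and $\E^y[M_c(\tau)] = M_c(0)$, and since $\tau = T^\uparrow(r'-\pi,\delta)$ on the event $\{T^\uparrow(r'-\pi,\delta) \leq t\}$, one gets
\[ \P^y\bigl(T^\uparrow(r'-\pi,\delta) \leq t\bigr) \;\leq\; \frac{M_c(0)}{m}, \qquad m := \inf\Bigl\{ M_c\bigl(T^\uparrow(r'-\pi,\delta)\bigr) \,:\, T^\uparrow(r'-\pi,\delta) \leq t \Bigr\}, \]
so the task reduces to an upper bound on $M_c(0)$ and a lower bound on $m$. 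One may assume $\delta \leq 2$, since $\norm{r'(s)-\pi} \leq 2$ for every $s$ and the left-hand side vanishes otherwise; one may also assume $\norm{y}$ above a threshold depending only on $K$, the complementary range being absorbed into $\Ccal_\delta$ at the very end (harmless because $t > 1/\gamma$ keeps the right-hand side bounded below by a positive constant).

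For the numerator, bound $\prod_k\bigl((Lu)_k/\pi_k\bigr)^{y_k}$ by its maximum over the simplex $\Scal_K$, which is attained at $Lu = y/\norm{y}$ and equals $\exp\bigl(\norm{y}\,D(y/\norm{y}\,\|\,\pi)\bigr)$, where $D$ is the relative entropy; bounding $D$ by the $\chi^2$-divergence gives $D(y/\norm{y}\,\|\,\pi) \leq \varrho(y)^2/\underline\pi \leq \delta^2/16$ using $\varrho(y) \leq \underline\pi\delta^2/8$ and $\delta \leq 2$. Since Lemma~\ref{lemma:finite-constant} yields $\int_\Scal (F(\Pi^{-1}Lu))^{c-1}\,du \leq C/c^K = C(\gamma t)^K$ for a finite constant $C$ depending only on $Q$ and $\pi$, we obtain $M_c(0) \leq C(\gamma t)^K \exp(\delta^2\norm{y}/16)$; the factor $t^K = e^{K\log t}$ in the statement is produced exactly by this choice $c = 1/(\gamma t)$.

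For the denominator, work on $\{T^\uparrow(r'-\pi,\delta) \leq t\}$ and write $s = T^\uparrow(r'-\pi,\delta)$, so that $\norm{x'(s)} = \norm{y}$, $\norm{r'(s)-\pi} \geq \delta$ and $e^{-c\gamma s} \geq e^{-c\gamma t} = e^{-1}$. Discard the factor $(F(\Pi^{-1}Lu))^{c-1}$ using the bound $(F(\Pi^{-1}Lu))^{c-1} \geq c_F' > 0$ for a.e.\ $u \in \Scal$, uniform over $c \in (0,1)$ (valid since $F$ is continuous, hence bounded, on the compact image of $\Scal$, and is positive almost everywhere there by Lemma~\ref{lemma:finite-constant}), and restrict the integral to the ball $B = \{u \in \Scal : \norm{u - L^{-1}r'(s)} \leq \eta/\norm{y}\}$ for a small constant $\eta$. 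For $u \in B$ one has $(Lu)_k \geq r'_k(s)\bigl(1 - (K-1)\eta\bigr)$ for every $k$ with $x'_k(s) > 0$ (using $r'_k(s) \geq 1/\norm{y}$ for such $k$), hence
\[ \prod_{k=1}^K \left(\frac{(Lu)_k}{\pi_k}\right)^{x'_k(s)} \;\geq\; \bigl(1-(K-1)\eta\bigr)^{\norm{y}}\exp\bigl(\norm{y}\,D(r'(s)\,\|\,\pi)\bigr) \;\geq\; \bigl(1-(K-1)\eta\bigr)^{\norm{y}}\,e^{\delta^2\norm{y}/2}, \]
the last inequality by Pinsker's inequality $D(r'(s)\,\|\,\pi) \geq \tfrac{1}{2}\norm{r'(s)-\pi}^2$. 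As $\Scal$ is a compact convex body with nonempty interior, $B$ has volume at least $c_K(\eta/\norm{y})^{K-1}$ once $\eta/\norm{y}$ is below a threshold depending only on $K$ (this is where the lower bound on $\norm{y}$ is used). Choosing $\eta$ a small enough multiple of $1/K$ makes $(1-(K-1)\eta)^{\norm{y}} \geq e^{-\delta^2\norm{y}/50}$, so altogether $m \geq e^{-1}c_F'c_K(\eta/\norm{y})^{K-1}\exp\bigl(\delta^2\norm{y}/2 - \delta^2\norm{y}/50\bigr)$.

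Dividing the two bounds, $\P^y(T^\uparrow(r'-\pi,\delta) \leq t)$ is at most a finite constant depending only on $\delta,K,\pi,Q$ times $t^K\,\norm{y}^{K-1}\exp(-c'\delta^2\norm{y})$ with $c' = \tfrac{1}{2} - \tfrac{1}{50} - \tfrac{1}{16} > \tfrac{1}{8}$; since $\sup_{z>0} z^{K-1}e^{-(c'-1/8)\delta^2 z}$ is finite and depends only on $\delta$ and $K$, absorbing it gives the asserted bound $\Ccal_\delta\, e^{K\log t - \delta^2\norm{y}/8}$, with $\Ccal_\delta$ depending only on $\delta,K,\pi,Q$. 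I expect the real obstacle to be the lower bound on $m$: the integrand defining $M_c$ is exponentially peaked in $\norm{y}$ — which is exactly what drives the estimate — but its peak sits at $L^{-1}r'(s)$, a point that lies on the boundary of $\Scal$ as soon as some node is empty at time $T^\uparrow(r'-\pi,\delta)$, so the Laplace-type localization has to be done by hand, respecting the simplex geometry and treating the vanishing coordinates separately, rather than through a Gaussian approximation around an interior maximum.
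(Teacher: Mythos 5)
Your proof is correct and follows essentially the same strategy as the paper: apply optional stopping to the bounded martingale $M_c$ of Proposition~\ref{prop:martingale} with the choice $c = 1/(\gamma t)$, bound $M_c(0)$ from above via an entropy estimate (exploiting the hypothesis $\varrho(y)\leq\underline\pi\delta^2/8$ together with Lemma~\ref{lemma:finite-constant}, which is where the $t^K$ factor comes from), and bound $M_c$ at the hitting time from below via Pinsker's inequality combined with a localization of the integral near $L^{-1}r'(s)$. The only substantive difference is in that last localization: the paper restricts to the sublevel set $\{u\in\Scal: H(r'(T),Lu)\leq\delta^2/4\}$, whose volume is bounded below uniformly over $r'(T)\in\Scal_K$ and independently of $\norm{y}$, thereby costing only a multiplicative $e^{-\delta^2\norm{y}/4}$ and a $y$-free constant; whereas you restrict to an $L^1$ ball of radius $\eta/\norm{y}$, which produces an extra $\norm{y}^{K-1}$ that then has to be reabsorbed at the end by sacrificing a piece of the exponent. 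Both work, but the paper's sublevel-set choice makes the boundary issue you flag in your last paragraph disappear automatically, while yours handles it by hand via the cone property of the simplex. Two small imprecisions to be aware of: (i) you use $D(p\|q)\leq\chi^2(p\|q)\leq\varrho^2/\underline\pi$ where the paper uses the even more elementary $D(p\|q)\leq\norm{p-q}/\min_k q_k=\varrho/\underline\pi$ — your version requires the auxiliary reduction to $\delta\leq 2$, the paper's does not; (ii) your choice of $\eta$ must scale like $\delta^2/K$, not merely like $1/K$, for $(1-(K-1)\eta)^{\norm y}\geq e^{-\delta^2\norm y/50}$ to hold, so the threshold on $\norm y$ below which you defer to $\Ccal_\delta$ in fact depends on $\delta$ as well as $K$ — harmless, since $\Ccal_\delta$ may depend on $\delta$, but worth stating correctly.
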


\begin{proof}
	Fix in the rest of the proof $\delta > 0$, $t > 1/\gamma$
and $y \in \N^K$, and denote $c = 1/(\gamma t)$
and $T = T^\uparrow(r'-\pi, \delta)$: then Markov inequality gives
	\[ \P^y \left( T \leq t \right) \leq e \E^y \left( \exp \left( - c \gamma T \right)\right). \]
	
	We now derive an upper bound on this Laplace transform. Consider the martingale~$M_c$ of Proposition~\ref{prop:martingale}. Since under~$\P^y$ it is a bounded martingale by Proposition~\ref{prop:martingale} the optional stopping theorem gives
	\[ \E^y \left( M_c(T) \right) = \E^y \left( M_c(0) \right). \]

	We will provide an upper bound on $\E^y \left( M_c(0) \right)$ and a lower bound on $\E^y \left( M_c(T) \right)$ of the form $A \E^y(e^{-c\gamma T})$, thus providing a desired upper bound on $\E^y(e^{-c\gamma T})$. For $u, v \in \Scal_K$ let $H(u,v) = u_1 \log(u_1 / v_1) + \cdots + u_K \log(u_K / v_K)$ be the relative entropy, so that $M_c$ under~$\P^y$ can be rewritten as
	\[ M_c(t) = e^{-c\gamma t} \int_\Scal \exp \left\{ \norm{y} \left( H(r'(t), \pi) - H(r'(t), Lu) \right) \right\} \left( F(\Pi^{-1} L u) \right)^{c-1} du. \]
	
	The relative entropy is positive and satisfies the following upper and lower bounds:
\begin{equation}
\label{eq:bounds-relative-entropy}
\frac{1}{2}\norm{u - v}^2 \leq H(u, v) \leq \frac{1}{\min_k v_k} \norm{u - v}, \ u, v \in \Scal_K.
\end{equation}
	
	The lower bound, called Pinsker's inequality, is well-known, see for instance Pinsker~\cite{Pinsker64:0}, while the upper bound follows by convexity:
	\[ H(u,v) = \sum_{k=1}^K u_k \log \left( 1 + \frac{u_k-v_k}{v_k} \right) \leq \sum_{k=1}^K \frac{u_k(u_k-v_k)}{v_k} \leq \sum_{k=1}^K \frac{u_k|u_k-v_k|}{v_k} \leq \frac{\norm{u - v}}{\min_k v_k}, \]
	using $u_k \leq 1$. Thus using that the relative entropy is always positive, we obtain using the upper bound in~\eqref{eq:bounds-relative-entropy}
	\[ \E^y \left( M_c(0) \right) \leq e^{\norm{y} H(y/\norm{y}, \pi)} \int_\Scal \left( F(\Pi^{-1} L u) \right)^{c-1} du \leq A_1 e^{ \norm{y} \varrho(y) / \underline \pi} c^{-K} = A_2 e^{K \log t + \norm{y} \varrho(y) / \underline \pi} \]
	with $A_1 = \sup_{0 < c < 1}( c^K \int_\Scal (F(\Pi^{-1} L u))^{c-1} du )$ and $A_2 = A_1 \gamma^K$, $A_1$ and therefore $A_2$ being finite by Lemma~\ref{lemma:finite-constant} (recall that $0 < c < 1$ by assumption). We now derive a lower bound on $M_c(T)$. The lower bound in~\eqref{eq:bounds-relative-entropy} gives $H(r'(T), \pi) \geq \norm{x(T) - \pi}^2 / 2$ and since $\norm{x(T) - \pi} \geq \delta$ by definition of~$T$ we obtain the following lower bound on the integral part of $M_c(T)$:
	\begin{multline*}
		\int_\Scal e^{\norm{y} \left( H(r'(T), \pi) - H(r'(T), Lu) \right)} \left( F(\Pi^{-1} L u) \right)^{c-1} du\\
		\geq e^{\norm{y} \delta^2/2} \int_\Scal e^{-\norm{y} H(r'(T), Lu)} \left( F(\Pi^{-1} L u) \right)^{c-1} du.
	\end{multline*}
	
	By definition, $F$ is a continuous function, hence it is bounded on the compact set $\Scal' = \Pi^{-1} L(\Scal)$ and so
	\[
		\int_\Scal e^{-\norm{y} H(r'(T), Lu)} \left( F(\Pi^{-1} L u) \right)^{c-1} du \geq A_3 \int_\Scal e^{-\norm{y} H(r'(T), Lu)} du
	\]
	with $A_3 = (1 + \sup_{\Scal'} F)^{-1}$. For $v \in \Scal_K$ let $\Scal(v) = \{ u \in \Scal_K: H(v, u) \leq \delta^2/4 \}$ and $\phi(v) = \int_{\Scal(v)} du$ be the volume of $\Scal(v)$: then
	\[
		\int_\Scal e^{-\norm{y} H(r'(T), Lu)} du \geq e^{ -\norm{y} \delta^2/4} \phi(r'(T)) \geq e^{-\norm{y} \delta^2/4} A_4
	\]
	with $A_4 = \inf_{v \in \Scal_K} \phi(v)$. We have $A_4 > 0$ since $\phi$ is easily seen to be continuous, $\phi(v) > 0$ for any $v \in \Scal_K$ and $\Scal_K$ is compact. Therefore, we have proved
	\[ M_c(T) \geq e^{-c \gamma T} e^{\norm{y} \delta^2 / 2} A_3 e^{-\norm{y} \delta^2 / 4} A_4 = A_5 e^{-c\gamma T} e^{\norm{y} \delta^2/4} \]
	with $A_5 = A_3 A_4$ a deterministic constant.
Combining all the previous bounds, we obtain
	\[ \P^y \left( T^\uparrow(r'-\pi, \delta) \leq t \right) \leq \frac{e A_2}{A_5} \exp\left(K \log t + \norm{y} \varrho(y) / \underline \pi - \norm{y} \delta^2/4\right). \]
	Assuming $\varrho(y) \leq \underline \pi \delta^2 / 8$ ends the proof, since $A_2$ and $A_5$ only depend on~$\delta$.
\end{proof}

\section{Analysis of the open system}
\label{sec:open}

We extend Lemma~\ref{lemma:bound-closed}
and Proposition~\ref{prop:closed-system} to the open system, exploiting
the coupling with~$x'_n$ of Section~\ref{sec:coupling-closed-system}.
In the rest of the paper, $\varepsilon_0$ denotes the constant given by
Lemma~\ref{lemma:bound-closed}. 
Recall that $\kappa = \sup_{n \geq 1} (\lambda_n + \mu_n)$ is assumed
to be finite, and that the various constants, functions and operators that will be used, e.g., $\underline \pi$, $\gamma$, $\tau(\cdot)$, $\varrho(\cdot)$, $T^\uparrow$ and $T^\downarrow$ have been defined in Sections~\ref{sub:mobility} and~\ref{sub:func-operators}. 

\begin{lemma}
\label{lemma:bound-open}
For any $n \geq 1$, any $\delta < 2\varepsilon_0$ and any $y \in \N^K$, we have
\[ \P_n^y \left( \norm{r_n(\tau) - \pi} \geq \delta \right) \leq \P \left( \Pcal(\kappa \tau) \geq \delta \norm{y} / (4K) \right) + 2K \exp \left( - \frac{\delta^2 \norm{y}}{16 K^2} \right) \]
where $\tau = \tau(\delta/(4K))$
\end{lemma}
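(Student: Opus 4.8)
The plan is to decompose via the triangle inequality, comparing $r_n(\tau)$ with the closed system $r'_n(\tau)$ and then $r'_n(\tau)$ with~$\pi$. First I would write
\[ \norm{r_n(\tau) - \pi} \leq \norm{r_n(\tau) - r'_n(\tau)} + \norm{r'_n(\tau) - \pi}, \]
so that on the event $\{\norm{r_n(\tau) - \pi} \geq \delta\}$ at least one of the events $\{\norm{r_n(\tau) - r'_n(\tau)} \geq \delta/2\}$ or $\{\norm{r'_n(\tau) - \pi} \geq \delta/2\}$ occurs. A union bound then reduces the statement to controlling these two probabilities separately.

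For the first probability I would invoke~\eqref{eq:coupling-closed-system} from Lemma~\ref{lemma:coupling-closed-system} with threshold $\delta/2$ and time $t = \tau$, which yields
$\P_n^y(\norm{r_n(\tau) - r'_n(\tau)} \geq \delta/2) \leq \P(\Pcal(\kappa \tau) \geq (\delta/2)\norm{y}/(2K)) = \P(\Pcal(\kappa \tau) \geq \delta \norm{y}/(4K))$,
which is exactly the first term on the right-hand side. The case $\norm{y} = 0$ is trivial, since then $r_n(\tau) = r'_n(\tau) = \pi$ by convention (and in any event the right-hand side of the claimed inequality is then at least~$1$).

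For the second probability I would apply Lemma~\ref{lemma:bound-closed} with $\varepsilon = \delta/2$, using that $r'_n$ under $\P_n^y$ has the same law as $r'$ under $\P^y$ because the law of the closed system does not depend on~$n$. The hypothesis $\delta < 2\varepsilon_0$ is precisely what guarantees $\varepsilon = \delta/2 < \varepsilon_0$, and the choices of time are consistent since $\tau(\varepsilon/(2K)) = \tau(\delta/(4K)) = \tau$. Lemma~\ref{lemma:bound-closed} then gives
$\P_n^y(\norm{r'_n(\tau) - \pi} \geq \delta/2) \leq 2K \exp(-(\delta/2)^2 \norm{y}/(4K^2)) = 2K \exp(-\delta^2 \norm{y}/(16 K^2))$,
the second term. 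Adding the two bounds completes the proof.

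There is no genuine obstacle here; the only point requiring care is the bookkeeping of the numerical constants---splitting $\delta$ as $\delta/2 + \delta/2$, tracking the factor $2K$ inside $\tau(\cdot)$ so that both applications refer to the same $\tau = \tau(\delta/(4K))$, and checking that the admissibility condition on $\varepsilon$ in Lemma~\ref{lemma:bound-closed} is exactly the hypothesis $\delta < 2\varepsilon_0$.
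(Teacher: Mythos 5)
Your proof is correct and follows exactly the paper's own argument: triangle inequality to compare $r_n(\tau)$ with the closed-system ratio $r'_n(\tau)$, a union bound, then Lemma~\ref{lemma:coupling-closed-system} for the first term and Lemma~\ref{lemma:bound-closed} with $\varepsilon = \delta/2$ for the second, with identical constants. The paper states this in one line; you have simply filled in the bookkeeping.
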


\begin{proof}
	Since
	\[ \P_n^y \left( \norm{r_n(\tau) - \pi} \geq \delta \right) \leq \P_n^y \left( \norm{r_n(\tau) - r'_n(\tau)} \geq \delta / 2 \right) + \P_n^y \left( \norm{r_n'(\tau) - \pi} \geq \delta / 2 \right) \]
	the result follows directly from Lemmas~\ref{lemma:coupling-closed-system} and~\ref{lemma:bound-closed}.
\end{proof}

\begin{prop}
\label{prop:control-homogenization}
	 There exists a finite constant $c_1 > 0$ and for each $\delta > 0$, there exists a finite constant $c_2(\delta)$ such that $c_1$ and $c_2(\delta)$ only depend on~$K$, $\kappa$ and~$Q$ (and $\delta$ for $c_2(\delta)$)  and such that for every $n \geq 1$, every $t > 0$ and every $0 < \delta < 1$, $\phi \in \N$ and $y \in \N^K$ such that:
	\[ \eta < 2\varepsilon_0, \quad \frac{\phi \eta}{8 K \kappa} > \max \left( \tau\left(\eta / (4K)\right), \frac{1}{\gamma} \right), \quad \norm{y} > \phi \quad \text{ and } \quad \varrho(y) \leq \eta, \]
	where $\eta = \underline \pi \delta^2 / 32$, then
	\[
		\P_n^y \left( T^\uparrow(r_n-\pi, \delta) \leq t \wedge T^\downarrow(x_n, \phi) \right) \leq c_2(\delta) \exp\left( \log t - c_1 (\delta^4 \phi - \log \phi) \right).
	\]
\end{prop}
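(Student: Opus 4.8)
The plan is to transfer the closed-system estimate of Proposition~\ref{prop:closed-system} to the open system using the coupling of Section~\ref{sec:coupling-closed-system}, restarting the closed system at a well-chosen deterministic time so that the homogenization has had a chance to kick in. First I would pick the time $t_0 = \phi \eta / (8K\kappa)$, which by hypothesis exceeds both $\tau(\eta/(4K))$ and $1/\gamma$; on the event $\{T^\downarrow(x_n,\phi) > t_0\}$ we have $\norm{x_n(u)} > \phi$ for all $u \le t_0$, so the $M/M/1$ coupling of Lemma~\ref{lemma:coupling-MM1} combined with~\eqref{eq:lower-bound-tilde} keeps the process from the simplex boundary as long as $r_n - \pi$ stays below $\underline\pi$, and in particular $\norm{x_n}$ is comparable to $\ell_n$. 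The rough structure is: (i) at time $t_0$, show that $r_n(t_0)$ is close to $\pi$ with very high probability, i.e. $\varrho(x_n(t_0)) \le \delta^2 \underline\pi / 8$-ish, using Lemma~\ref{lemma:bound-open} with the $\tau$-argument $t_0 \ge \tau(\eta/(4K))$; (ii) on that good event, apply the strong Markov property at time $t_0$ and use the closed-system bound of Proposition~\ref{prop:closed-system} started from $x_n(t_0)$, after again paying a coupling error via Lemma~\ref{lemma:coupling-closed-system} on the remaining time interval $[t_0, t]$.

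Concretely, I would write
\[
\P_n^y\!\left(T^\uparrow(r_n-\pi,\delta) \le t \wedge T^\downarrow(x_n,\phi)\right)
\le \P_n^y\!\left(T^\uparrow(r_n-\pi,\delta) \le t_0\right)
+ \P_n^y\!\left(t_0 < T^\uparrow(r_n-\pi,\delta) \le t \wedge T^\downarrow(x_n,\phi)\right),
\]
and bound each term. For the first term, note that on $\{T^\uparrow(r_n-\pi,\delta) \le t_0\}$ either $r_n$ leaves the $\delta$-ball before $T^\downarrow(x_n,\phi)$ — but then there is an earlier time at which $\norm{r_n - \pi}$ equals, say, $\eta = \underline\pi\delta^2/32 < \delta$, and by~\eqref{eq:lower-bound-tilde} we still have $\widetilde T_0(x_n)$ not yet reached, so $\norm{x_n} = \ell_n$ up to that time — and the deviation must then have been caused purely by the closed-system dynamics plus the small $a_n+d_n$ perturbation, which by Lemma~\ref{lemma:coupling-closed-system} (with $y$ as the initial state, $\norm y > \phi$, over time $\le t_0$) costs at most $\P(\Pcal(\kappa t_0) \ge \delta\norm y/(4K))$; since $\kappa t_0 = \phi\eta/(8K)$ is of order $\phi\delta^2$ while $\delta\norm y/(4K) \gtrsim \delta\phi$, the deviation inequality~\eqref{eq:deviation-P} gives a bound of the form $\exp(-c_1\delta^4\phi)$ after bounding $h$. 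For the second term, condition on $\Fcal_{t_0}$: on the good event $\{\varrho(x_n(t_0)) \le \underline\pi\delta^2/8\}$ — whose complement has probability $\le 2K\exp(-c\delta^2\phi) + \P(\Pcal(\kappa t_0) \ge \cdots)$ by Lemma~\ref{lemma:bound-open} — the shifted process $\theta_{t_0} x_n$ under $\P_n^{x_n(t_0)}$ agrees with the closed system $\theta_{t_0} x'_n$ up to a Poisson-sized error; splitting $\{\norm{r_n - \pi}$ reaches $\delta\}$ into $\{\norm{r'_n - \pi}$ reaches $\delta/2\} \cup \{\norm{r_n - r'_n}$ reaches $\delta/2\}$ and applying Proposition~\ref{prop:closed-system} to the first (with the factor $t$, giving $\Ccal_{\delta/2}\exp(K\log t - \delta^2\phi/32)$) and Lemma~\ref{lemma:coupling-closed-system} over $[t_0,t]$ to the second yields the stated form. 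Collecting the three contributions — each of order $\exp(\log t - c_1(\delta^4\phi - \log\phi))$ up to $\delta$-dependent constants, after absorbing $\log t$ terms and the $\log\phi$ coming from $\log t_0 \le \log t + \log\phi$ in the application of Proposition~\ref{prop:closed-system} — completes the bound.

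The main obstacle I anticipate is bookkeeping the interaction between the two coupling errors over the two time windows: the $M/M/1$ coupling of Lemma~\ref{lemma:coupling-MM1} is only valid up to $\widetilde T_0(x_n)$, so before invoking it one must already know $r_n$ has stayed close to $\pi$, which is precisely what is being proved — this circularity is broken by working with the intermediate threshold $\eta \ll \delta$ and using~\eqref{eq:lower-bound-tilde} to show that reaching level $\eta$ in $\norm{r_n - \pi}$ happens strictly before any node empties, so the closed-system comparison is legitimate up to the first passage to $\eta$. A second delicate point is that the initial-state hypothesis of Proposition~\ref{prop:closed-system} requires $\varrho$ of the starting state to be at most $\underline\pi\delta^2/8$, whereas we only control $\varrho(y) \le \eta = \underline\pi\delta^2/32$ at time $0$ and must re-establish a comparable bound at time $t_0$ after the $M/M/\infty$ dynamics — this is exactly the content of Lemma~\ref{lemma:bound-open}, and the constants $\eta$, $t_0$ have been rigged so that the $\tau$-argument and the Poisson tail both come out small enough; verifying these numeric inequalities is routine but must be done carefully.
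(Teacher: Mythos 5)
Your approach correctly identifies the three key ingredients — Lemma~\ref{lemma:bound-open} to re-establish homogenization at a well-chosen deterministic time, Proposition~\ref{prop:closed-system} for the closed-system hitting-time bound, and Lemma~\ref{lemma:coupling-closed-system} for the coupling error — and the single-window version of the argument you sketch is in fact exactly what the paper does \emph{within one time window}. But there is a structural gap that your choice of constants cannot fix: the coupling with the closed system degrades linearly with time, and you are applying it over the entire interval $[t_0,t]$ where $t$ is arbitrary.

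Concretely, the coupling error you pay in the second step is $\P(\Pcal(\kappa(t-t_0)) \geq \delta \norm{y'}/(4K))$ with $\norm{y'} > \phi$. For this Poisson tail to be small you need $\kappa(t-t_0) \ll \delta \phi/(4K)$, i.e., $t$ bounded by a quantity of order $\phi$. But the proposition is asserted for \emph{every} $t>0$, and the $\log t$ in the target bound is exactly the price for allowing $t \gg \phi$. Once $t-t_0 \gtrsim \phi$, the Poisson random variable $\Pcal(\kappa(t-t_0))$ has mean exceeding the threshold $\delta\phi/(4K)$ and the tail probability is of order one, so your bound becomes vacuous. No tuning of $\eta$ or $t_0$ repairs this: the closed-system comparison is genuinely only good over windows of length comparable to $\phi$, because during a window of that length the number of external arrivals/departures is of order $\phi\delta^2$, small relative to the initial state $\phi$, whereas over a much longer window it is not. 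The paper resolves this by partitioning $[0,t]$ into $\lfloor t/u \rfloor$ windows of length $u = \phi\eta/(8K\kappa)$, re-homogenizing at the start of each window via the Markov property at time $iu-\tau$ (using Lemma~\ref{lemma:bound-open} and the fact that $\norm{x_n(iu-\tau)} \geq \phi$ on $\{iu \leq T^\downarrow(x_n,\phi)\}$), applying the closed-system estimate over a window of length $u$ only, and then summing the resulting per-window bounds; the $\lfloor t/u \rfloor$ prefactor is what ultimately produces the additive $\log t$ in the exponent. Your proposal is missing this iteration — without it, the argument only yields the bound for $t \lesssim \phi$, which is strictly weaker than the stated result and is insufficient for the downstream applications (e.g.\ in Proposition~\ref{prop:critical-case} and Lemma~\ref{lemma:crude-upper-bound}, one needs $t$ polynomial in $\phi$ or in $n$ while $\phi$ is only logarithmic in $n$).
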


\begin{proof}
In the rest of the proof, fix $n$, $t$, $\delta$, $\phi$, $\eta$ and~$y$
as in the statement of the proposition and denote for simplicity
$\tau = \tau(\eta / (4K))$ and $u = \phi \eta / (8K\kappa)$.  We have
	\[
		\P_n^y \left( T_r \leq t \wedge T_x \right) \leq \sum_{i = 0}^{\lfloor t/u \rfloor -1} \P_n^y \left( T_r \leq T_x, iu \leq T_r \leq (i+1)u \right)
	\]
	where from now on $T_r = T^\uparrow(r_n-\pi, \delta)$ and $T_x = T^\downarrow(x_n, \phi)$. Since $\norm{y} \geq \phi$ and $\varrho(y) \leq \eta$, the term corresponding to $i = 0$ in the above sum is upper bounded by
	\[ \P_n^y \left( T_r \leq T_x, T_r \leq u \right) \leq \sup_{y': \norm{y'} \geq \phi, \varrho(y') \leq \eta} \P_n^{y'} \left( T_r \leq u \right). \]
	
	Consider now $i > 0$, and note that $iu \geq \tau$ by assumption. Since $\norm{x_n(iu - \tau)} \geq \phi$ in the event $\{ iu \leq T_x \}$, the Markov property at time $iu-\tau$ gives
	\[ \P_n^y \left( T_r \leq T_x, iu \leq T_r \leq (i+1)u, \norm{r_n(iu) - \pi} \geq \eta \right) \leq \sup_{y': \norm{y'} \geq \phi} \P_n^{y'} \left( \norm{r_n(\tau) - \pi} \geq \eta \right). \]
	
	Similarly, since $\norm{x_n(iu)} \geq \phi$ in the event $\{ iu \leq T_x \}$, the Markov property at time~$iu$ gives
	\[
		\P_n^y \left( T_r \leq T_x, iu \leq T_r \leq (i+1)u, \norm{r_n(iu) - \pi} \leq \eta \right) \leq \sup_{y': \norm{y'} \geq \phi, \varrho(y') \leq \eta} \P_n^{y'} \left( T_r \leq u \right).
	\]
	
	Since the previous upper bounds do not depend on~$i$, summing over $0 \leq i \leq \lfloor t/u \rfloor-1$ gives
	\[ \P_n^y \left( T_r \leq t \wedge T_x \right) \leq \lfloor t/ u \rfloor \left( \sup_{y': \norm{y'} \geq \phi} \P_n^{y'} \left( \norm{r_n(\tau) - \pi} \geq \eta \right) + \sup_{y': \norm{y'} \geq \phi, \varrho(y') \leq \eta} \P_n^{y'} \left( T_r \leq u \right) \right).
	\]
	
	Let $y' \in \N^K$ with $\norm{y'} \geq \phi$: since $\eta < 2 \varepsilon_0$ and $\tau = \tau(\eta / (4K))$, Lemma~\ref{lemma:bound-open} gives
	\[ \P_n^{y'} \left( \norm{r_n(\tau) - \pi} \geq \eta \right) \leq \P \left( \Pcal(\kappa \tau) \geq \eta \phi / (4K) \right) + 2K \exp \left( - \frac{\eta^2 \phi}{16 K^2} \right). \]
	
	Since $\phi \geq 8 \kappa K \tau / \eta$, we have
	\[ \P \left( \Pcal(\kappa \tau) \geq \eta \phi / (4K) \right) \leq \P\left(\Pcal(\phi \eta / (8 K)) \geq \phi \eta / (4 K) \right) \leq \exp\left(-\frac{\phi \eta h(2)}{8 K}\right) \]
	and we finally get
	\[ \sup_{y': \norm{y'} \geq \phi} \P_n^{y'} \left( \norm{r_n(\tau) - \pi} \geq \eta \right) \leq \exp\left(-\frac{\phi \eta h(2)}{8 K}\right) + 2K \exp \left( -\frac{\eta^2 \phi}{16 K^2} \right). \]
	
	Consider now $y' \in \N^K$ with $\norm{y'} \geq \phi$ and $\varrho(y') \leq \eta$: then
	\[
		\P_n^{y'} \left( T_r \leq u \right) \leq \P_n^{y'} \left( T^\uparrow(r_n-r_n', \delta/2) \leq u \right) + \P_n^{y'} \left( T^\uparrow(r_n'-\pi, \delta/2) \leq u \right).
	\]
	
	On the one hand, Lemma~\ref{lemma:coupling-closed-system} implies that
	\[ \P_n^{y'} \left( T^\uparrow(r_n-r_n', \delta/2) \leq u \right) \leq \P_n^{y'} \left( T^\uparrow(a_n + d_n, \delta \norm{y'}/(4K)) \leq u \right) \leq \P \left( \Pcal(\kappa u) \geq \eta \norm{y'}/(4K) \right) \]
	using that $\delta > \eta$ and that $a_n + d_n$ is an increasing process to get the last inequality. Since $\norm{y'} \geq \phi$ this gives
	\[ \P_n^{y'} \left( T^\uparrow(r_n-r_n', \delta/2) \leq u \right) \leq \P \left( \Pcal(\kappa u) \geq \delta \phi/(4K) \right) \leq \exp\left(-\frac{\phi \eta h(2)}{8 K}\right) \]
	plugging in the definition of~$u$ and using~\eqref{eq:deviation-P} for the last inequality. On the other hand, since $u > 1/\gamma$ and $\varrho(y') \leq \eta = \underline \pi (\delta/2)^2 / 8$, Proposition~\ref{prop:closed-system} implies that
	\begin{align*}
		\P_n^{y'} \left( T^\uparrow(r_n'-\pi, \delta/2) \leq u \right) & \leq \Ccal_{\delta/2} \exp \left( K \log u - \delta^2 \norm{y'} / 32 \right)\\
		& \leq \Ccal_{\delta/2} (\eta / (8 K\kappa))^K \exp \left( K \log \phi - \delta^2 \phi / 32 \right)
	 \end{align*}
	using $\norm{y'} \geq \phi$ and the definition of $u$ to obtain the last inequality. Gathering the previous upper bounds, we have proved at this point that
	\begin{multline*}
		\frac{u}{t} \P_n^y \left( T^\uparrow(r_n-\pi, \delta) \leq t \wedge T^\downarrow(x_n, \phi) \right) \leq 2K \exp \left( -\frac{\eta^2 \phi}{16 K^2} \right) + 2\exp\left(-\frac{\phi \eta h(2)}{8 K}\right)\\
		+ \Ccal_{\delta/2} (\eta / (8 K\kappa))^K \exp \left( K \log \phi - \delta^2 \phi / 32 \right).
	\end{multline*}
	The result then follows easily from this expression.
\end{proof}

Proposition~\ref{prop:control-homogenization} has the following consequence, which is interesting in its own right and will be used in the proof of Lemma~\ref{lemma:crude-upper-bound}.

\begin{prop} \label{prop:critical-case}
	Let $n \geq 1$: if $\rho_n = 1$ then $x_n$ is null-recurrent.
\end{prop}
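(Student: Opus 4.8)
We give a plan for proving Proposition~\ref{prop:critical-case}. Fix $n$ with $\rho_n = 1$, so that $\lambda_n = \mu_n > 0$, and note that $x_n$ is then an irreducible Markov process on $\N^K$ in which the empty state $0$ is reachable from every state. Null-recurrence means \emph{recurrent but not positive-recurrent}, and I would establish these two facts separately. For the recurrence part I would use the standard criterion: for an irreducible process it suffices to exhibit a \emph{finite} set $A \subset \N^K$ with $\P_n^y(\sigma_A < \infty) = 1$ for every $y$, where $\sigma_A$ denotes the first hitting time of~$A$; the strong Markov property and irreducibility then force $A$, hence every state, to be visited infinitely often.

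\emph{Non-positive-recurrence} is the easier part. Arguing by contradiction, suppose $x_n$ admits a stationary distribution $\nu_n$, and set $\psi(y) = \lambda_n - \sum_{k:\,y_k>0}\mu_{n,k} = \sum_{k:\,y_k=0}\mu_{n,k} \ge 0$, the last equality using $\lambda_n = \mu_n$. Testing stationarity against the bounded function $y \mapsto \norm{y}\wedge N$, and using that internal moves leave $\norm{\cdot}$ unchanged, one gets $\Omega_n(\norm{\cdot}\wedge N)(y) = \psi(y)\indicator{\norm{y}<N} - \big(\sum_{k:\,y_k>0}\mu_{n,k}\big)\indicator{\norm{y}=N}$, hence $\E_{\nu_n}[\psi(x_n)\indicator{\norm{x_n}<N}] \le \mu_n\,\nu_n(\norm{x_n}=N)$; letting $N\to\infty$ gives $\E_{\nu_n}[\psi(x_n)] = 0$ by monotone convergence, so $\psi = 0$ $\nu_n$-almost surely. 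This contradicts $\psi(0) = \mu_n > 0$ together with $\nu_n(0)>0$. (Equivalently, one may apply optional stopping to the bounded-jump martingale $\norm{x_n(t)} - \int_0^t \psi(x_n(u))\,du$ at the return time to~$0$, which is legitimate because that return time is integrable under the positive-recurrence hypothesis.)

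For \emph{recurrence} I would take $A = \{y : \norm{y}\le\phi\}$ with $\phi$ a large constant, fix $\delta \in (0,\underline\pi\wedge 1)$ small enough for Lemma~\ref{lemma:bound-open} and Proposition~\ref{prop:control-homogenization} to apply, and set $\eta = \underline\pi\delta^2/32$. The argument rests on three ingredients. First, by Lemma~\ref{lemma:coupling-MM1} together with~\eqref{eq:lower-bound-tilde}, as long as $\norm{r_n-\pi}<\delta$ one has $\norm{x_n} = \ell_n$, the queue length of a critical $M/M/1$ queue, which reaches level~$\phi$ in almost surely finite time; thus the only obstruction to reaching~$A$ is the system de-homogenizing beforehand. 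Second, by Lemma~\ref{lemma:bound-open}, starting from any $y$ with $\norm{y}\ge\phi$ one has $\varrho(x_n(\tau(\eta/(4K)))) \le \eta$ with probability at least $1-q(\phi)$, where $q(\phi)\to 0$ as $\phi\to\infty$ uniformly in $\norm{y}\ge\phi$ (and if $\norm{x_n}$ has dropped to $\phi$ in the meantime we are already done). Third, the key estimate: for $\phi$ large enough, $\sup\{\P_n^y(T^\uparrow(r_n-\pi,\delta) < T^\downarrow(x_n,\phi)) : \norm{y}>\phi,\ \varrho(y)\le\eta\} \le \tfrac12$. I would prove this last estimate by decomposing, on the event in question, the descent of $\ell_n = \norm{x_n}$ from $\norm{y}$ down to~$\phi$ according to the dyadic blocks $\{\ell_n \in [\phi 2^{i-1}, \phi 2^i)\}$, $i\ge 1$, and then into unit-length time windows, and bounding the de-homogenization probability inside block~$i$ by Proposition~\ref{prop:control-homogenization} with lower level $\phi 2^{i-1}$ (using Lemma~\ref{lemma:bound-open} to restore the stronger $\eta$-homogenization at the start of each window). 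Since that bound decays like $\exp(\log t - c_1(\delta^4\phi 2^{i-1} - \log(\phi 2^{i-1})))$ over a length-$t$ window, while the expected occupation time of block~$i$ by a critical $M/M/1$ queue before hitting~$\phi$ grows only polynomially in $\phi 2^i$ (and in $\norm{y}$), summing over the windows and over~$i$ gives a bound that tends to~$0$ as $\phi\to\infty$ \emph{uniformly} in the starting state.

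Granting these three ingredients, a renewal argument concludes. Define stopping times $0 = R_0 < R_1 < \cdots$: at each $R_j$, if $\norm{x_n(R_j)}\le\phi$ stop; otherwise wait the deterministic time $\tau(\eta/(4K))$ to (re)homogenize and then run until $\norm{x_n}$ hits~$\phi$ (success: stop) or the system de-homogenizes before that (failure: set $R_{j+1}$ to the de-homogenization time), and if the homogenization step fails simply set $R_{j+1} = R_j + \tau(\eta/(4K))$. By the first ingredient each round ends almost surely in finite time on the no-success event, and by the second and third ingredients and the strong Markov property each round fails with conditional probability at most $q(\phi) + \tfrac12 < 1$, regardless of the (arbitrarily large) current state; hence $\P_n^y(\sigma_A = \infty) = 0$ and $x_n$ is recurrent. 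The step I expect to be the main obstacle is the \emph{uniformity in the initial state} of the key estimate: Proposition~\ref{prop:control-homogenization} controls only a fixed time window, so from a very large state one must allow a long time for $\norm{x_n}$ to come down to~$\phi$, and the point of the dyadic decomposition is precisely that while $\norm{x_n}$ is of order $\phi 2^i$ the relevant lower level in Proposition~\ref{prop:control-homogenization} is also of that order, making the per-window bound strong enough to absorb both the polynomially growing occupation times and the number of windows. Making this bookkeeping genuinely uniform over all $y$ with $\norm{y}>\phi$, rather than merely for $\norm{y}$ large, is what legitimises the renewal step. Combining the two parts, $x_n$ is recurrent but not positive-recurrent, hence null-recurrent.
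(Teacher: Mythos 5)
Your proposal reaches the same conclusion by a genuinely different route. The paper does not use a renewal/regeneration scheme at all: it invokes Lamperti's recurrence criterion (Theorem~3.2 of~\cite{Lamperti60:0}) for the chain $\chi_i = \norm{x_n(\omega_i)}$ embedded along stopping times with increments $\norm{x_n(\omega_{i-1})}^{3/2}$. Via~\eqref{eq:sde} the drift of $\norm{x_n}$ over such a horizon is $\sum_k \mu_{n,k}\int \P^y(x_{n,k}(u)=0)\,du$, which Proposition~\ref{prop:control-homogenization} makes exponentially small in the starting level~$\xi$; comparing this with a variance lower bound supplied by the critical walk $\widetilde\ell_n$ yields Lamperti's drift/variance inequality. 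The entire point of that criterion is that a drift estimate over a \emph{single} window of length $\xi^{3/2}$, uniform in~$\xi$, already gives recurrence, so the paper never has to follow the process all the way from a large state down to a fixed finite set.

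Your plan replaces Lamperti by a direct excursion argument: show, uniformly in $\norm{y}>\phi$, that an $\eta$-homogenized state reaches $\{\norm{\cdot}\le\phi\}$ before de-homogenizing with conditional probability at least~$1/2$, and iterate. The conclusion of your key estimate is correct, and the Green's-function heuristic you allude to does support it: the expected occupation time of level~$k$ before hitting~$\phi$ is $O(k)$ uniformly in the starting height, while the de-homogenization rate at level~$k$ decays like $\exp(-c\delta^4 k)$, so the sum over levels is small uniformly. Be aware, though, that the dyadic bookkeeping is heavier than the sketch suggests: the process re-enters each block many times, Proposition~\ref{prop:control-homogenization} only applies over a \emph{deterministic} time window from an $\eta$-homogenized state, so on each re-entry you must both restore $\eta$-homogenization via Lemma~\ref{lemma:bound-open} and cap the remaining occupation time (e.g.\ by Markov's inequality against a Green's-function bound), and all the resulting small failure probabilities must be summed over blocks and re-entries. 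That bookkeeping is exactly what Lamperti's theorem automates, which is why the paper's route is shorter even though it looks more indirect. For non-positive-recurrence your generator/stationarity argument is correct but longer than needed: the paper simply notes that, by Lemma~\ref{lemma:coupling-MM1}, $\norm{x_n}\ge\ell_n$ pathwise, so the return time of $x_n$ to~$0$ dominates that of the critical $M/M/1$ queue~$\ell_n$, whose mean is infinite.
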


\begin{proof}
	Since $\ell_n$ is null-recurrent it is enough to show that $x_n$ is recurrent by Lemma~\ref{lemma:coupling-MM1}. Intuitively, the drift of~$x_n$ is always strictly positive due to the fact that there is always a positive probability that a potential departure finds an empty node, creating a slack between the arrival and departure rates. However, the drift should go to~0 as the size of the initial state becomes large. Theorem~3.2 in Lamperti~\cite{Lamperti60:0} asserts that if the drift vanishes sufficiently fast, then $x_n$ is recurrent.
	
	Since $n$ is fixed, in order to ease notation, we omit the subscript~$n$, so for instance $a$ and $d$ refer to the processes~$a_n$ and~$d_n$, respectively, and $x$ refers to~$x_n$. Let $b = 1/4$ and $\varphi(y) = \norm{y}^{1 + 2b} = \norm{y}^{3/2}$ for $y \in \N^K$. For $i \geq 1$ let $\omega_i = \omega_{i-1} + \varphi(x(\omega_{i-1}))$ with $\omega_0 = 0$ and $\chi_i = \norm{x(\omega_i)}$. If we can show that
	\begin{multline} \label{eq:lamperti}
		\text{ess } \sup \, \E^0 \left( \chi_{i+1} - \chi_i \, | \, \chi_i = \xi, \chi_j = \xi_j, j \leq i-1 \right)\\
		\leq \frac{1}{2\xi} \text{ess } \inf \ \E^0 \left( (\chi_{i+1} - \chi_i)^2 \, | \, \chi_i = \xi, \chi_j = \xi_j, j \leq i-1 \right) + C \xi^{-1-b}
	\end{multline}
	for some finite constant $C$ and all $\xi$ large enough, where the sup and the inf are taken over $i \geq 1$ and $(\xi_j, 0 \leq j \leq i-1)$, then Theorem~$3.2$ in Lamperti~\cite{Lamperti60:0} will imply that $x$ almost surely visits infinitely often some finite set; since it is irreducible this will prove recurrence. Let $\Fcal_i = \sigma(\chi_j, 0 \leq j \leq i)$ and $\Gcal_i = \sigma(x(\omega_j), 0 \leq j \leq i)$: we have
	\[
		\E^0 \left( \chi_{i+1} - \chi_i \, | \, \Fcal_i \right) = \E^0 \left[ \E^0 \left( \chi_{i+1} - \chi_i \, | \, \Gcal_i \right) \, | \, \Fcal_i \right] = \E^0 \left[ \E^{x(\omega_i)} \left( \norm{x(\varphi(x(0)))} - \norm{x(0)} \right) \, | \, \Fcal_i \right]
	\]
	and hence
	\[
		\E^0 \left( \chi_{i+1} - \chi_i \, | \, \Fcal_i \right) \leq \max_{y: \norm{y} = \chi_i} \E^y \left( \norm{x(\varphi_y)} - \norm{x(0)} \right),
	\]
	writing indifferently $\varphi_y$ or $\varphi(y)$. Similarly,
	\[
		\E^0 \left( (\chi_{i+1} - \chi_i)^2 \, | \, \Fcal_i \right) \geq \inf_{y: \norm{y} = \chi_i} \E^y \left( (\norm{x(\varphi_y)} - \norm{x(0)})^2 \right).
	\]
	
	Remember that $\widetilde \ell = \norm{x(0)}+a-d$: since $\norm{x} \geq \ell$ by Lemma~\ref{lemma:coupling-MM1} and $\ell = \underline {\widetilde \ell} \geq \widetilde \ell$ by definition, we get $(\norm{x(\varphi_y)} - \norm{x(0)})^2 \geq \varphi_y$ in the event $\{ \widetilde \ell(\varphi_y) - \norm{x(0)} \geq \varphi_y^{1/2} \}$ and so
	\[
		\E^y \left( (\norm{x(\varphi_y)} - \norm{x(0)})^2 \right) \geq \E^y \left( (\norm{x(\varphi_y)} - \norm{x(0)})^2 ; \widetilde \ell(\varphi_y) - \norm{x(0)} \geq \varphi_y^{1/2} \right) \geq c \varphi_y
	\]
	with $c = \inf_{t \geq 0} \, \P^0( \widetilde \ell(t) \geq \sqrt t )$ which is strictly positive since $\widetilde \ell(t) / \sqrt t$ under $\P^0$ converges in distribution as $t \to +\infty$ to a normal random variable. Thus to show~\eqref{eq:lamperti} it is enough to show that there exists some finite constant~$C$ such that
\begin{equation}
\label{eq:goal}
\max_{y: \norm{y} = \xi} \E^y \left( \norm{x(\varphi_\xi)} - \norm{x(0)} \right) \leq \frac{C \varphi_\xi}{\xi^{1+b}}
\end{equation}
	for all $\xi$ large enough. Let $\xi \geq 0$ and $y \in \N^K$ such that $\norm{y} = \xi$: integrating~\eqref{eq:sde} over~$\P^y$ and using $\lambda = \mu$ gives
	\[ \E^y \left( \norm{x(\varphi_\xi)} - \norm{x(0)} \right) = \sum_{k=1}^K \mu_k \int_0^{\varphi_\xi} \P^y \left( x_k(u) = 0 \right) du. \]
	
	Let $k \in \{1, \ldots, K\}$: then
	\[ \int_0^{\varphi_\xi} \P^y \left( x_k(u) = 0 \right) du \leq \varphi_\xi \P^y \left( T^\downarrow (x, \xi/2) \leq \varphi_\xi \right) + \int_0^{\varphi_\xi} \P^y \left( x_k(u) = 0, T^\downarrow(x, \xi/2) \geq \varphi_\xi \right) du. \]
	
	Since $\widetilde \ell \leq \norm{x}$ and $\widetilde \ell$ is a symmetric random walk, we have
	\[ \P^y \left( T^\downarrow (x, \xi/2) \leq \varphi_\xi \right) \leq \P^y \left( T^\downarrow (\widetilde \ell, \xi/2) \leq \varphi_\xi \right) = \P^0 \left( T^\uparrow (\widetilde \ell, \xi/2) \leq \varphi_\xi \right) = 2 \P^0 \left( \widetilde \ell(\varphi_\xi) \geq \xi/2 \right) \]
	using the reflection principle to get the last equality. Moreover, writing $\widetilde \ell(t)$ for $t \geq 0$ as $\widetilde \ell(t) = (a(t) - \lambda t) + (\lambda t - d(t))$, using the triangular inequality and the fact that $a$ and $d$ are i.i.d.\ Poisson processes with intensity $\lambda$, we obtain
	\[ \P^0 \left( \widetilde \ell(\varphi_\xi) \geq \xi/2 \right) \leq \P \left( |\Pcal(\lambda \varphi_\xi) - \lambda \varphi_\xi | \geq \xi/4 \right). \]
	
	Extending~\eqref{eq:deviation-P} to upper bound $\P(\Pcal(\lambda \varphi_\xi) \leq \varphi_\xi - \xi/4)$, it can be proved that there exists a finite constant $H > 0$ such that
	\[ \P \left( |\Pcal(\lambda \varphi_\xi) - \lambda \varphi_\xi | \geq \xi/4 \right) \leq \exp \left( -H\xi^2 / \varphi_\xi \right) = \exp \left( -H \sqrt \xi \right). \]
	
	Further, let $\delta > 0$ and $\eta = \underline \pi \delta^2 / 32$ be such that $\delta < \underline \pi$ and $\eta < 2 \varepsilon_0$, and write $\tau = \tau(\eta / (4K))$: then
	\begin{multline*}
		\int_0^{\varphi_\xi} \P^y \left( x_k(u) = 0, T^\downarrow(x, \xi/2) \geq \varphi_\xi \right) du \leq \tau + \int_\tau^{\varphi_\xi} \P^y \left( x_k(u) = 0, T^\downarrow(x, \xi/2) \geq \varphi_\xi \right) du
	\end{multline*}
	and for $u \geq \tau$, the Markov property at time $\tau$ gives
	\begin{multline*}
		\P^y \left( x_k(u) = 0, T^\downarrow(x, \xi/2) \geq \varphi_\xi \right) \leq \P^y \left( \norm{r(\tau) - \pi} \geq \eta \right)\\
		+ \sup_{y'} \, \P^{y'} \left( x_k(u-\tau) = 0, T^\downarrow(x, \xi/2) \geq \varphi_\xi - \tau \right)
	\end{multline*}
	where the supremum is taken over the set $\{y' \in \N^K: \norm{y'} \geq \xi / 2, \varrho(y') \leq \eta\}$. Since $\eta < 2 \varepsilon_0$ and $\tau = \tau(\eta / (4K))$, Lemma~\ref{lemma:bound-open} gives
	\begin{align*}
		\P^y \left( \norm{r(\tau) - \pi} \geq \eta \right) & \leq \P \left( \Pcal(\kappa \tau) \geq \delta \xi / (4K) \right) + 2K \exp \left( - \frac{\delta^2 \xi}{16 K^2} \right)\\
		& \leq \exp\left( - \frac{\kappa \delta \xi}{4 K \kappa \tau} \right) + 2K \exp \left( - \frac{\delta^2 \xi}{16 K^2} \right)
	\end{align*}
	using~\eqref{eq:deviation-P} for the last inequality, together with $h(v) \geq v$ for $v$ large enough (so this inequality holds for $\xi$ large enough). Since $x_k(u) = 0$ implies $\norm{r(u) - \pi} \geq \underline \pi \geq \delta$, we have for any $\tau \leq u \leq \varphi_\xi$ and any $y' \in \N^K$ with $\norm{y'} \geq \xi/2$ and $\varrho(y') \leq \eta$
	\begin{align*}
		\P^{y'} \left( x_k(u-\tau) = 0, T^\downarrow(x, \xi/2) \geq \varphi_\xi - \tau \right) & \leq \P^{y'} \left( T^\uparrow(r - \pi, \delta) \leq u - \tau, T^\downarrow(x, \xi/2) \geq \varphi_\xi - \tau \right)\\
		& \leq \P^{y'} \left( T^\uparrow(r - \pi, \delta) \leq \varphi_\xi \wedge T^\downarrow(x, \xi/2) \right)\\
		& \leq c_2(\delta) \exp\left( \log \varphi_\xi - c_1 (\delta^4 \xi/2 - \log(\xi/2)) \right)
	\end{align*}
	since all the assumptions of Proposition~\ref{prop:control-homogenization} are satisfied (for $\xi$ large enough). Gathering all the previous bounds, we obtain for $\xi$ large enough
	\begin{multline*}
		\E^y \left( \norm{x(\varphi_\xi)} - \norm{x(0)} \right) \leq 4 \mu \varphi_\xi \exp \left( -H \sqrt \xi \right) + \tau \mu + \mu \exp\left( - \frac{\kappa \delta \xi}{4 K \kappa \tau} \right)\\
		+ 2K \mu \exp \left( - \frac{\delta^2 \xi}{16 K^2} \right) + c_2(\delta) \mu \varphi_\xi \exp\left( \log \varphi_\xi - c_1 (\delta^4 \xi/2 - \log(\xi/2)) \right)
	\end{multline*}
	which proves~\eqref{eq:goal} since every term decays exponentially fast, except for the constant term $\tau \mu$ which has to be compared to $\varphi_\xi / \xi^{1+b}$ which goes to infinity.
\end{proof}

\section{Scaling limit}
\label{sec:process}

We assume in this section that the heavy traffic assumption stated in Section~\ref{sub:main-results} holds. In particular, $\rho_n \leq 1$ for every $n \geq 1$ and $\lambda_n \to \lambda > 0$ and $n(1-\rho_n) \to \alpha \geq 0$. 
% 
% Fix from now on two parameters $\lambda > 0$ and $\alpha \geq 0$. In the rest of the paper, we assume that the following assumption holds. \\
% 
% \noindent \textbf{Heavy-traffic assumption.} \emph{From now on, we assume that $\rho_n \leq 1$ for each $n \geq 1$ and that
% \[ \lim_{n \to +\infty} \lambda_n = \lambda \ \text{ and } \ \lim_{n \to +\infty} n (1-\rho_n) = \alpha. \]
% }
% 
% We believe that the techniques of the paper could be adapted to the case $\alpha \in \R$ (and hence remove the assumption $\rho_n \leq 1$). Under the heavy-traffic assumption, we have $\mu_n \to \lambda$. Note that we do not require each $\lambda_{n,k}$ or $\mu_{n,k}$ to converge, but only the corresponding sum. \\
% 
% In this section 
We are interested in the sequence $(X_n, n \geq 1)$ of renormalized processes where $X_n$ for each $n \geq 1$ is given by
\[ X_n(t) = \frac{x_n(n^2 t)}{n}, \ t \geq 0. \]

We define $R_n(t) = r_n(n^2 t)$ which satisfies
$R_n(t) = X_n(t) / \norm{X_n(t)}$ when $\norm{X_n(t)} > 0$, and also
$L_n(t) = n^{-1} \ell_n(n^2 t)$.
In the sequel, we denote by $B$ a Brownian motion with drift
$-\lambda \alpha$, variance $2 \lambda$; $\P^b$ for $b \in \R$ denotes its law started at~$b$. Note that it is known that $L_n \Rightarrow \underline B$ under the heavy-traffic assumption, where from now on $\Rightarrow$ denotes weak convergence. The goal of this section is to prove the following result. 

\begin{theorem}
\label{thm:cv-process}
The sequence of processes $(X_n, n \geq 1)$ under $\P_n^{0}$ converges
weakly as $n$ goes to infinity to $\underline B \pi$ under~$\P^0$.
\end{theorem}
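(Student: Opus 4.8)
The plan is to apply the general excursion-convergence principle of Lambert and Simatos \cite{Lambert12:0} to the regenerative process $X_n$ under $\P_n^0$. Informally, the result in \cite{Lambert12:0} says that a sequence of regenerative processes converges weakly to a limiting regenerative process provided (i) the excursions away from the regeneration point that reach a given height $\varepsilon$ converge, for a suitable dense set of $\varepsilon$, and (ii) some tightness/non-degeneracy conditions controlling the small excursions and the excursion measures hold. The limiting object is $\underline B \pi$, whose regeneration point is $0 \in \R^K$ and whose non-trivial excursions are exactly the excursions of $\underline B$ scaled by the direction vector $\pi$. So the proof reduces to two things: understanding the excursions of $X_n$ away from $0$, and showing they converge to $\pi$ times an excursion of $\underline B$.

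First I would set up the excursion framework: under $\P_n^0$ the process $x_n$ (hence $X_n$) is a positive-recurrent or null-recurrent Markov chain started from its regeneration state $0$, so $X_n$ is regenerative with respect to returns to $0$. I would fix $\varepsilon > 0$ and consider the first excursion of $\norm{X_n}$ that reaches level $\varepsilon$, i.e. the process $e_\varepsilon^\uparrow(X_n)$ in the notation of Section~\ref{sec:notation}, together with its left endpoint $g_\varepsilon(X_n)$. The key claim is that $e_\varepsilon^\uparrow(X_n)$, started from the point $x_n$ hits when $\norm{x_n}$ first reaches $n\varepsilon$, converges weakly to $\pi$ times the corresponding excursion of $\underline B$. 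This is where the homogenization machinery enters: by the coupling of Lemma~\ref{lemma:coupling-MM1}, $\norm{X_n}$ agrees with $L_n$ (which converges to $\underline B$) as long as no node is empty, i.e. up to $T^\uparrow(R_n - \pi, \delta)$ via \eqref{eq:lower-bound-tilde}; and by Proposition~\ref{prop:control-homogenization}, once $\norm{x_n}$ is large and the state is approximately homogenized ($\varrho \le \eta$), the time for $R_n$ to leave a $\delta$-neighborhood of $\pi$ before $\norm{x_n}$ drops below a low threshold $\phi$ is exponentially unlikely on the relevant time scale. Combined with Lemma~\ref{lemma:bound-open}, which gives homogenization after a constant (unscaled) time, this shows that along a height-$\varepsilon$ excursion, after an asymptotically negligible initial transient $R_n$ stays close to $\pi$ for the whole excursion with probability tending to one; hence $X_n = \norm{X_n} R_n \approx \norm{X_n}\,\pi$, and $\norm{X_n}$ along the excursion converges to the $\underline B$-excursion by the $M/M/1$ coupling.

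The steps, in order, would be: (1) verify that $X_n$ under $\P_n^0$ is regenerative and identify $0$ as the regeneration point, and recall that $\underline B\pi$ under $\P^0$ is the candidate limit with its excursion structure; (2) prove convergence of the first height-$\varepsilon$ excursion: show the entrance point $x_n(T^\uparrow(x_n,n\varepsilon))$ has norm $\approx n\varepsilon$ and (using Lemma~\ref{lemma:bound-open} applied just after entering, plus Proposition~\ref{prop:control-homogenization}) that from a slightly later time the process is homogenized throughout the remainder of the excursion w.h.p., so that $e_\varepsilon^\uparrow(X_n) \Rightarrow \pi\, e_\varepsilon^\uparrow(\underline B)$; (3) check the remaining hypotheses of \cite{Lambert12:0}: control of the excursions below level $\varepsilon$ (they are uniformly small because $\norm{X_n}$ cannot move far in the $O(1)$ real time it takes to get homogenized, and below a fixed height the excursion durations on the $n^2$-time-scale are negligible), convergence of $g_\varepsilon$, and the non-degeneracy of the limiting excursion measure near $0$; (4) conclude via the main theorem of \cite{Lambert12:0} that $X_n \Rightarrow \underline B\pi$.

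The main obstacle, I expect, is Step~(2), and within it the uniform control over the \emph{entire} excursion rather than at a fixed time. The delicate point is that a height-$\varepsilon$ excursion of $\norm{X_n}$ lasts a time of order one on the $t$-scale, i.e.\ order $n^2$ on the $x_n$-scale, during which the fast internal dynamics must be shown to keep the configuration pinned near $\pi$ uniformly; this requires iterating the constant-time homogenization estimate over an order-$n^2$ time window while staying above the low threshold $\phi$, and Proposition~\ref{prop:control-homogenization} is precisely designed to give this with an error that is exponentially small in $\phi$ (chosen as a slowly growing function of $n$, e.g.\ $\phi = \phi_n \to \infty$ with $\phi_n/n \to 0$) against only a $\log t$ loss. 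One must choose $\phi_n$, $\delta$ carefully so that the excursion both gets homogenized before $\norm{x_n}$ can fall back below $\phi_n$ and the small-excursion contribution (excursions of height below $\phi_n/n \to 0$) is asymptotically negligible; matching these requirements, and transferring everything cleanly into the abstract hypotheses of \cite{Lambert12:0}, is the technical heart of the argument.
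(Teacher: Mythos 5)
Your plan follows the paper's strategy essentially exactly: apply the Lambert--Simatos excursion-convergence principle, prove convergence of the first height-$\varepsilon$ excursion via the coupling with $\ell_n$ together with the homogenization estimates (Lemma~\ref{lemma:bound-open} at the entrance time, then Proposition~\ref{prop:control-homogenization} iterated over the excursion while $\norm{x_n}$ exceeds a slowly growing threshold $\phi_n$), and check the remaining hypotheses including the convergence of $g_\varepsilon$. This matches the paper's Propositions~\ref{prop:conv-e-t} and~\ref{prop:conv-g}.

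Two places where your sketch glosses over material ingredients. First, Proposition~\ref{prop:control-homogenization} only controls $R_n$ while $\norm{x_n}>\phi_n$, so every height-$\varepsilon$ excursion necessarily exits this regime near its right endpoint; you need a separate estimate showing that the remaining time from $\phi_n$ users down to zero is negligible on the $n^2$ time scale. The paper supplies this as Lemma~\ref{lemma:crude-upper-bound} (with $\phi_n=\lfloor(\log n)^2\rfloor$ and a bound of order $\sqrt n$), and it is also what allows the ``convergence-together'' comparison between $\sigma(X_n)$ and $\sigma(L_n\pi)$. Second, your justification for the control of the pre-big-excursion time (``below a fixed height the excursion durations on the $n^2$-time-scale are negligible'') is not correct as stated: excursions of height bounded between $\phi_n/n$ and $\varepsilon$ do contribute an order-one amount of rescaled time, and $g_\varepsilon(X_n)$ is a nondegenerate order-one random variable. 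Proving $g_\varepsilon(X_n)\Rightarrow g_\varepsilon(\underline B)$ requires an actual argument; the paper gets the stochastic upper bound from $g_\varepsilon(X_n)\leq g_\varepsilon(L_n)$ (via the $M/M/1$ coupling) and the matching lower bound by decomposing $g_\varepsilon(X_n)$ as a geometric sum of i.i.d.\ lifetimes of sub-$\varepsilon$ excursions that reach an intermediate level $\delta$, taking Laplace transforms and letting $\delta\to0$.
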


The above theorem is reminiscent of heavy-traffic diffusion limits for
the joint queue length process in various queueing networks,
involving reflected Brownian motion and state space collapse,
see for instance Bramson~\cite{Bramson98:0}, Reiman~\cite{Reiman84:0},
Stolyar~\cite{Stolyar04:0} and Williams~\cite{Williams98:0}.
However, to the best of our knowledge, this is the first result which
shows that mobility of users, rather than scheduling, routing or
load balancing, can act as a mechanism producing state space collapse.

It is straightforward to adapt the proof of Theorem~\ref{thm:cv-process} to handle a more general initial condition.  Let $b > 0$ and assume that $\norm{X_n(0)} \to b$, then it can be proved that:
\begin{itemize}
	\item for any sequence $(\varepsilon_n)$ such that $\varepsilon_n > 0$, $\varepsilon_n \to 0$ and $n^2 \varepsilon_n \to +\infty$, the sequence of shifted processes $(\theta_{\varepsilon_n} X_n)$ converges weakly towards $\underline B \pi$ under~$\P^{b}$;
	\item if $R_n(0) \to \pi$ then $(X_n)$ converges weakly to $\underline B \pi$ under~$\P^{b}$.
\end{itemize}

We see that if $R_n(0) \to \pi' \not = \pi$ then $(X_n)$ converges in the sense of finite-dimensional distributions to a process which is discontinuous at~0 and so cannot converge weakly (at least in the space of c\`adl\`ag functions).

\subsection{Overview of the proof}

Remember from Section~\ref{sub:func-operators} that $e_\varepsilon^\uparrow(X_n) = (\sigma \circ \theta_{T^\uparrow(X_n, \varepsilon)}) (X_n)$ and that $g_\varepsilon(X_n)$ is the left endpoint of the first excursion of~$X_n$ with height larger than $\varepsilon$, equivalently the left endpoint of the excursion of~$X_n$ straddling $T^\uparrow(X_n, \varepsilon)$. To prove Theorem~\ref{thm:cv-process}, we use Theorem~$4$ in Lambert and Simatos~\cite{Lambert12:0}: in particular, Theorem~\ref{thm:cv-process} will be proved if we can show that $g_\varepsilon(X_n) \Rightarrow g_\varepsilon(\underline B \pi)$, $e_\varepsilon^\uparrow(X_n) \Rightarrow e^\uparrow_\varepsilon(\underline B \pi)$ and $(T_0 \circ e_\varepsilon^\uparrow)(X_n) \Rightarrow (T_0 \circ e^\uparrow_\varepsilon) (\underline B \pi)$ for any $\varepsilon > 0$, where $X_n$ is considered under~$\P_n^0$ and $B$ under~$\P^0$. Note that $g_\varepsilon(\underline B \pi) = g_\varepsilon(\underline B)$ and similarly that $(T_0 \circ e^\uparrow_\varepsilon) (\underline B \pi) = (T_0 \circ e^\uparrow_\varepsilon) (\underline B)$.

The convergence of the two sequences $(e_\varepsilon^\uparrow(X_n))$
and $((T \circ e_\varepsilon^\uparrow)(X_n))$ is studied in
Proposition~\ref{prop:conv-e-t}, its proof relies on the following ideas.
First, let $\varpi_n = x_n(T^\uparrow(x_n, \varepsilon n))$:
the Markov property shows that $e_\varepsilon^\uparrow(X_n)$ under~$\P_n^0$
is equal in distribution to $\sigma(X_n)$ under $\P_n^{\varpi_n}$.
In Lemma~\ref{lemma:initial-homogenization} we show that
$\varpi_n \approx \varepsilon \pi$, i.e., the system is with high
probability homogenized at time $T^\uparrow(x_n, \varepsilon n)$.
Then, we want to show that this property lasts on times of order of~$n^2$
that we are interested in.  To do so we exploit the bound of
Proposition~\ref{prop:control-homogenization}, which shows that we will
be able to control the process on times of order of~$n^2$ as long as
there are at least of the order of $\log n$ users in the network.
Thus, this reduces the problem to control $\sigma (X_n)$ started with
$\log n$ users: this is studied in Lemma~\ref{lemma:crude-upper-bound}
where it is shown that $T_0(X_n)$ is at most of order of $\sqrt n$.
Since arrivals and departures are Poisson, on this time scale the total
number of users at most varies by~$\sqrt n$ which is negligible
compared to the space scale~$n$ that we are interested in.

On the other hand, the convergence
$g_\varepsilon(X_n) \Rightarrow g_\varepsilon(\underline B \pi)$,
proved in Proposition~\ref{prop:conv-g}, follows from two arguments:
one that provides a lower bound in terms of a sum of i.i.d.\ terms
related to $(T_0 \circ e_\varepsilon^\uparrow)(X_n)$, whose asymptotic
behavior we will control thanks to the convergences of
$(e_\varepsilon^\uparrow(X_n))$
and $((T_0 \circ e_\varepsilon^\uparrow)(X_n))$; and one that provides
a corresponding upper bound via the coupling with the $M/M/1$ queue.

\subsection{Convergence of $(e_\varepsilon^\uparrow(X_n), n \geq 1)$
and $((T_0 \circ e_\varepsilon^\uparrow)(X_n), n \geq 1)$}
\label{sub:conv}

The convergence of these two sequences is proved in
Proposition~\ref{prop:conv-e-t}. In the following proofs we make repeated use of the constant $\varepsilon_0$ given by Lemma~\ref{lemma:bound-closed} and of the various constants, functions and operators defined in Sections~\ref{sub:mobility} and~\ref{sub:func-operators}. 

\begin{lemma}
\label{lemma:initial-homogenization}
For any $\varepsilon, \delta > 0$,
\[ \lim_{n \to +\infty} \P_n^0 \left( \norm{R_n(T^\uparrow(X_n, \varepsilon)) - \pi} \geq \delta \right) = 0. \]
\end{lemma}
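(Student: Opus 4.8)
The plan is to reduce the statement to the two estimates already available for the open system: Lemma~\ref{lemma:bound-open}, which says that a system with many users becomes homogenized after a \emph{constant} time, and Proposition~\ref{prop:control-homogenization}, which says that a homogenized system stays homogenized for a long time \emph{as long as the number of users does not drop below a low threshold}. Write $T_n = T^\uparrow(x_n,\varepsilon n)$, so that $n^2 T^\uparrow(X_n,\varepsilon) = T_n$ and $\norm{R_n(T^\uparrow(X_n,\varepsilon)) - \pi} = \varrho(x_n(T_n))$; since $x_n$ is recurrent, $T_n < \infty$ $\P_n^0$-almost surely. As $\{\varrho(x_n(T_n)) \ge \delta\}$ decreases in $\delta$, we may take $\delta$ as small as we wish; fix $\delta < 1$ small enough that $\eta := \underline\pi\,\delta^2/32 < 2\varepsilon_0$, set $\tau = \tau(\eta/(4K))$, and fix a sequence $\phi_n \to +\infty$ with $\phi_n/n \to 0$ and $\phi_n/\log n \to +\infty$ (for instance $\phi_n = \lfloor (\log n)^2 \rfloor$).

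The key difficulty is that between the moment $\norm{x_n}$ first becomes large and the time $T_n$ at which it reaches $\varepsilon n$, the total number of users may dip close to $0$, so that homogenization once attained can be lost; hence one cannot apply Lemma~\ref{lemma:bound-open} and Proposition~\ref{prop:control-homogenization} just once. The remedy is to decompose according to the excursion of $\norm{x_n}$ above level $\phi_n$ (i.e.\ the maximal interval on which $\norm{x_n} > \phi_n$) that straddles $T_n$, since \emph{inside} such an excursion $\norm{x_n} > \phi_n$, which is exactly what Proposition~\ref{prop:control-homogenization} needs. Let $\sigma_1 < \sigma_2 < \cdots$ be the successive times at which $\norm{x_n}$ up-crosses level $\phi_n$ (these are stopping times) and $[\sigma_j,\rho_j)$ the corresponding excursion intervals; since $\norm{x_n(0)} = 0$ and $\norm{x_n(T_n)} = \lceil \varepsilon n\rceil > \phi_n$ for $n$ large, $T_n$ lies in exactly one interval $[\sigma_N,\rho_N)$. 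Before exploiting this I would dispose of large values of $T_n$: by Lemma~\ref{lemma:coupling-MM1} we have $\norm{x_n} \ge \ell_n$, hence $T_n \le T^\uparrow(\ell_n,\varepsilon n)$; since $L_n$ converges weakly to $\underline B$ and $\underline B$ reaches level $\varepsilon$ in an almost surely finite time under $\P^0$, the sequence $n^{-2}T^\uparrow(\ell_n,\varepsilon n)$ is tight, so $\P_n^0(T_n > n^3) \to 0$.

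It thus remains to control $\P_n^0(\varrho(x_n(T_n)) \ge \delta,\ T_n \le n^3)$, which is bounded by $\sum_{j \ge 1}\P_n^0(\sigma_j \le T_n < \rho_j,\ \varrho(x_n(T_n)) \ge \delta,\ T_n \le n^3)$. On the $j$-th event I distinguish three cases. (i) If $T_n - \sigma_j < \tau$, then at least $\lceil\varepsilon n\rceil - \phi_n \ge \varepsilon n/2$ arrivals occurred during $[\sigma_j,\sigma_j+\tau]$; by the strong Markov property of the arrival process at $\sigma_j$, this contributes at most $\P_n^0(\sigma_j \le n^3)\,\P(\Pcal(\kappa\tau) \ge \varepsilon n/2)$. (ii) If $T_n \ge \sigma_j + \tau$ but $\varrho(x_n(\sigma_j+\tau)) > \eta$, then the strong Markov property at $\sigma_j$ and Lemma~\ref{lemma:bound-open} bound the contribution by $\P_n^0(\sigma_j \le n^3)\,p_n$ with $p_n := \P(\Pcal(\kappa\tau) \ge \eta\phi_n/(4K)) + 2K\exp(-\eta^2\phi_n/(16K^2))$. (iii) If $T_n \ge \sigma_j + \tau$ and $\varrho(x_n(\sigma_j+\tau)) \le \eta$, then $T_n < \rho_j$ keeps $\norm{x_n} > \phi_n$ on $[\sigma_j+\tau,T_n]$, while $\varrho(x_n(T_n)) \ge \delta$ and $T_n \le n^3$ force, for the process shifted at $\sigma_j+\tau$, $T^\uparrow(r_n-\pi,\delta) \le n^3 \wedge T^\downarrow(x_n,\phi_n)$; the strong Markov property at $\sigma_j+\tau$ and Proposition~\ref{prop:control-homogenization} (whose hypotheses hold for $n$ large) then bound the contribution by $\P_n^0(\sigma_j \le n^3)\,q_n$ with $q_n := c_2(\delta)\exp(3\log n - c_1(\delta^4\phi_n - \log\phi_n))$. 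None of the per-excursion bounds depends on $j$, and $\sum_{j}\P_n^0(\sigma_j \le n^3) \le \E_n^0[a_n(n^3)] \le \kappa n^3$, since every up-crossing of $\phi_n$ is an arrival; so the whole sum is at most $\kappa n^3\bigl(\P(\Pcal(\kappa\tau) \ge \varepsilon n/2) + p_n + q_n\bigr)$, which tends to $0$ by the choice of $\phi_n$ (for $\phi_n = \lfloor(\log n)^2\rfloor$ each of the three terms is $e^{3\log n}$ times a quantity decaying faster than any power of $n$).

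The step I expect to be the main obstacle is precisely the one described above: controlling the deep down-crossings of $\norm{x_n}$ between levels $\phi_n$ and $\varepsilon n$. It forces the excursion decomposition, and hence the two competing requirements that one must balance — truncating $T_n$ at a polynomial scale via the $M/M/1$ comparison, so that only polynomially many excursions are relevant and the $\log t$ term in Proposition~\ref{prop:control-homogenization} stays of order $\log n$; and letting $\phi_n$ grow faster than $\log n$, so that the per-excursion estimates beat this polynomial factor. Everything else amounts to routine applications of the strong Markov property and of the two quoted estimates.
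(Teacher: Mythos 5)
Your proof is correct, but it takes a genuinely different route from the paper's. The paper does \emph{not} decompose over all excursions; instead it localizes to the single short stretch just before $T_n$. Concretely, it sets $u_n = n^{1/5}$, lets $T_n' = T^\uparrow(x_n, n\varepsilon - u_n)$, and notes that at time $T_n' + \tau$ the system has roughly $n\varepsilon$ users (since only $O(1)$ arrivals/departures occur in the constant time $\tau$, which is $\ll u_n$) and is homogenized by Lemma~\ref{lemma:bound-open}. It then only has to control the process on a window of length $v_n = \sqrt{n}$ (the final climb of height $u_n$ takes time of order $u_n^2 = n^{2/5} \ll v_n$, handled by a random-walk estimate), applying Proposition~\ref{prop:control-homogenization} a single time with $\phi = u_n$ and $t = v_n$. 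By contrast, you truncate $T_n$ at the crude polynomial scale $n^3$ via the $M/M/1$ coupling, enumerate all up-crossings of level $\phi_n = \lfloor(\log n)^2\rfloor$ in $[0, n^3]$ (at most $a_n(n^3)$, hence $\E \leq \kappa n^3$ of them), run the two-step argument (homogenize after $\tau$, then stay homogenized) within each excursion, and absorb the polynomial overcounting by the super-polynomial decay in $\phi_n$. Both proofs are sound. What yours buys: it dispenses with the intermediate level $n\varepsilon - u_n$, the careful scale hierarchy $u_n^2 \ll v_n \ll n^2$, and the attendant random-walk hitting-time estimates; the case split on each excursion is mechanical. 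What the paper's approach buys: by zooming into the last short window it never pays the $n^3$ factor, the truncation $T_n \leq n^3$ is unnecessary, and only one application of each estimate is needed rather than a union bound over excursions. One small point worth being explicit about in your case (iii): you need the event $\{\sigma_j < T_n\} \cap \{\sigma_j \leq n^3\} \cap \{\norm{x_n} > \phi_n \text{ on } [\sigma_j, \sigma_j+\tau]\} \cap \{\varrho(x_n(\sigma_j+\tau)) \leq \eta\}$ to be $\Fcal_{\sigma_j+\tau}$-measurable before applying the strong Markov property at $\sigma_j + \tau$, and then observe that $\{T_n < \rho_j,\ T_n \geq \sigma_j+\tau\}$ implies exactly this event plus the post-$(\sigma_j+\tau)$ event $\{T^\uparrow(r_n-\pi,\delta) \leq n^3 \wedge T^\downarrow(x_n,\phi_n)\}$ for the shifted process — this is implicit in your write-up and does go through.
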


\begin{proof}
	Fix $\varepsilon > 0$, by monotonicity, the result only needs to be proved for $\delta$ small enough; in the sequel we will consider $\delta > 0$ such that $\eta < 2\varepsilon_0$ with $\eta = \underline \pi \delta^2 / 32$. Let $\tau = \tau(\eta / (4K))$, $u_n = n^{1/5}$, $v_n = n^{1/2}$, $T_n = T^\uparrow(x_n, n \varepsilon)$, $T_n' = T^\uparrow(x_n, n \varepsilon - u_n)$ and $E_n$ be the event
	\[ E_n = \left\{ T'_n + \tau \leq T_n \right\} \cap \left\{ \norm{x_n(T'_n + \tau)} \geq n \varepsilon - 2u_n \right\} \cap \left\{ \norm{r_n(T'_n + \tau) - \pi} \leq \eta \right\}. \]
	Using that $T'_n \leq T_n$, the strong Markov property applied at~$T_n'$ gives
	\[
		\P_n^0 \left( E_n^c \right) = \P_n^{\varpi_n'} \left( \left\{ T_n \leq \tau \right\} \cup \left\{ \norm{x_n(\tau)} \leq n\varepsilon - 2u_n \right\} \cup \left\{ \norm{r_n(\tau) - \pi} \geq \eta \right\} \right)
	\]
	where $\varpi_n'$ is equal in distribution to $x_n(T_n')$ under~$\P_n^0$. On the other hand, the strong Markov property applied at the stopping time $T_n' + \tau$ gives
	\[
		\P_n^0 \left( \{ \norm{r_n(T_n)-\pi} \geq \delta \} \cap E_n \right) \leq \max_{y \in \Tcal_n} \, \P_n^y \left( \norm{r_n(T_n) - \pi} \geq \delta \right).
	\]
	with $\Tcal_n = \{ y \in \N^K: n \varepsilon - 2u_n \leq \norm{y} \leq n \varepsilon \text{ and } \varrho(y) \leq \eta \}$. Let $y_n \in \Tcal_n$ that realizes the maximum, so that $\P_n^0( \{ \norm{r_n(T_n)-\pi} \geq \delta \} \cap E_n) \leq \P_n^{y_n}( \norm{r_n(T_n) - \pi} \geq \delta)$. Thus we get the bound
	\begin{multline*}
		\P_n^0 \left( \norm{r_n(T_n)-\pi} \geq \delta \right) \leq \P_n^{\varpi_n'} \left( T_n \leq \tau \text{ or } \norm{x_n(\tau)} \leq n\varepsilon - 2u_n \right) + \P_n^{\varpi_n'} \left( \norm{r_n(\tau) - \pi} \geq \eta \right)\\
		+ \P_n^{y_n} \left( \norm{r_n(T_n) - \pi} \geq \delta \right).
	\end{multline*}
	
	Since $\norm{\varpi_n'} = \lceil n \varepsilon - u_n \rceil$, if under $\P_n^{\varpi_n'}$ we have $T_n \leq \tau$ or $\norm{x_n(\tau)} \leq n\varepsilon - 2u_n$ then we must have at least $u_n$~arrivals or $u_n$~departures in $[0,\tau]$. Since the arrival and departure rates are bounded (by $\kappa$) while $u_n \to +\infty$ we see that the probability of this event vanishes, i.e., $\P_n^{\varpi_n'}(T_n \leq \tau \text{ or } \norm{x_n(\tau)} \leq n \varepsilon - 2u_n) \to 0$. 

	On the other hand, since $\norm{\varpi_n'} = \lceil n \varepsilon - u_n \rceil$, we have
	\[ \P_n^{\varpi_n'} \left( \norm{r_n(\tau) - \pi} \geq \eta \right) \leq \P_n^{y_n'} \left( \norm{r_n(\tau) - \pi} \geq \eta \right) \]
	for some $y_n' \in \N^K$ with $\norm{y_n'} = \lceil n\varepsilon - u_n \rceil$. Since $\eta < 2 \varepsilon_0$ and $\tau = \tau(\eta / (4K))$, Lemma~\ref{lemma:bound-open} shows that $\P_n^{y_n'}(\norm{r_n(\tau) - \pi} \geq \eta) \to 0$. Hence we have
	\[
		\limsup_{n \to +\infty} \ \P_n^0 \left( \norm{r_n(T_n)-\pi} \geq \delta \right) \leq \limsup_{n \to +\infty} \ \P_n^{y_n} \left( \norm{r_n(T_n) - \pi} \geq \delta \right)
	\]
	and we now show that this last upper bound is equal to~0. 
	
	We have
	\[ \P_n^{y_n} \left( \norm{r_n(T_n) - \pi} \geq \delta \right) \leq \P_n^{y_n} \left( T_n \geq v_n \right) + \P_n^{y_n} \left( T^\uparrow(r_n - \pi, \delta) \leq v_n \right) \]
	and by Lemma~\ref{lemma:coupling-MM1},
	\[ \P_n^{y_n} \left( T_n \geq v_n \right) \leq \P_n^{y_n} \left( T^\uparrow(\ell_n, n \varepsilon) \geq v_n \right) \leq \P_n^{y_n''} \left( T^\uparrow(\ell_n, n \varepsilon) \geq v_n \right) \]
	where $y_n'' \in \N^K$ is such that $\norm{y_n''} = \lfloor n \varepsilon - u_n \rfloor$, using stochastic monotonicity of $T^\uparrow(\ell_n, n \varepsilon)$ in the size of the initial condition. Since $\ell_n \geq \widetilde \ell_n$, we get
	\[ \P_n^{y_n} \left( T_n \geq v_n \right) \leq \P_n^{y_n''} \left( T^\uparrow(\widetilde \ell_n, n \varepsilon) \geq v_n \right) = \P_n^{0} \left( T^\uparrow(\widetilde \ell_n, u_n) \geq v_n \right). \]
	
	On the other hand we have
	\[
		\P_n^{y_n} \left( T^\uparrow(r_n - \pi, \delta) \leq v_n \right) \leq \P_n^{y_n} \left( T^\downarrow(\widetilde \ell_n, u_n) \leq v_n \right) + \P_n^{y_n} \left( T^\uparrow(r_n - \pi, \delta) \leq v_n \wedge T^\downarrow(x_n, u_n) \right)
	\]
	where we have used the inequality $T^\downarrow(x_n, u_n) \leq T^\downarrow(\widetilde \ell_n, u_n)$ that stems from Lemma~\ref{lemma:coupling-MM1}. Since $\eta < 2 \varepsilon_0$, $\varrho(y_n) \leq \eta$ and $\norm{y_n} \geq u_n$, all the assumptions of Proposition~\ref{prop:control-homogenization} are satisfied, at least for $n$ large enough. Thus the second term of the right-hand side of the previous display vanishes, and gathering all the previous bounds we see that we are left with
	\begin{multline*}
		\limsup_{n \to +\infty} \ \P_n^0 \left( \norm{r_n(T_n)-\pi} \geq \delta \right) \leq \limsup_{n \to +\infty} \ \P_n^{0} \left( T^\uparrow(\widetilde \ell_n, u_n) \geq v_n \right)\\
		+ \limsup_{n \to +\infty} \P_n^{y_n} \left( T^\downarrow(\widetilde \ell_n, u_n) \leq v_n \right).
	\end{multline*}
	
	Since the sequence of rescaled processes $(\widetilde L_n)$ with $\widetilde L_n(t) = \widetilde \ell_n(n^2 t) / n$ converges in distribution to a Brownian motion, it is not hard to show that the two sequences of random variables $(u_n^{-2} T^\uparrow(\widetilde \ell_n, u_n))$ under~$\P_n^0$ and $(n^{-2} T^\uparrow(\widetilde \ell_n, u_n))$ under $\P_n^{y_n}$ converge weakly to a non-degenerate random variable (actually, hitting times are continuous functionals when the limiting process is the almost sure realization of a Brownian motion, see for instance Proposition VI.$2$.$11$ in Jacod and Shiryaev~\cite{Jacod03:0}). Since $(u_n)^2 \ll v_n \ll n^2$ this finally proves that the right-hand side of the previous display is equal to~0, hence the result.
\end{proof}

\begin{lemma}
\label{lemma:crude-upper-bound}
	Let $\phi_n = \lfloor (\log n)^{2} \rfloor$: then
	\[ \lim_{n \to +\infty} \left( \max_{y: \norm{y} = \phi_n} \P_n^y \left( T_0(x_n) \geq \sqrt n \right) \right) = 0. \]
\end{lemma}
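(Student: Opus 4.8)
\emph{Overview.} The plan is to drive $\norm{x_n}$ down to~$0$ in stages, using the $M/M/1$ lower bound of Lemma~\ref{lemma:coupling-MM1} and the homogenization control of Proposition~\ref{prop:control-homogenization}; the underlying point is that the budget~$\sqrt n$ is extremely generous, any polylogarithmic amount of time being negligible against it. Fix $\delta\in(0,\underline\pi)$ small enough that $\eta:=\underline\pi\delta^2/32<2\varepsilon_0$ and write $\tau:=\tau(\eta/(4K))$. Starting from an arbitrary $y$ with $\norm y=\phi_n$, I would first reduce to a homogenized state via the strong Markov property at the fixed time~$\tau$: Lemma~\ref{lemma:bound-open} gives $\P_n^y(\varrho(x_n(\tau))>\eta)\to0$, since its right-hand side vanishes as $\norm y=\phi_n\to+\infty$, and~\eqref{eq:deviation-P} gives $\P_n^y(\norm{x_n(\tau)}\notin[\phi_n/2,2\phi_n])\to0$, because emptying or doubling the population in the fixed time~$\tau$ requires at least~$\phi_n$ arrivals or departures. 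Hence it is enough to bound $\P_n^{z}(T_0(x_n)\ge\sqrt n-\tau)$ uniformly over~$z$ with $\norm z\in[\phi_n/2,2\phi_n]$ and $\varrho(z)\le\eta$.

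\emph{Descent via the $M/M/1$ coupling.} From such a homogenized~$z$, apply Proposition~\ref{prop:control-homogenization} with time budget~$\sqrt n$ and threshold $\phi=\lceil c\log n\rceil$ for a constant~$c$ large enough that $c_2(\delta)\exp(\log\sqrt n-c_1(\delta^4\phi-\log\phi))\to0$; the remaining hypotheses hold for~$n$ large. On the resulting event, of probability tending to~$1$, $\norm{r_n(\cdot)-\pi}$ stays below $\delta<\underline\pi$ — hence no node is ever empty — for as long as $\norm{x_n}>\phi$, so by Lemma~\ref{lemma:coupling-MM1} the process $\norm{x_n}$ coincides, up to the first time it drops below~$\phi$, with the $M/M/1$ queue~$\ell_n$ restarted at~$z$. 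Since $\ell_n$ is the reflection of the random walk $\widetilde\ell_n=\norm z+a_n-d_n$, whose drift $\lambda_n-\mu_n\le0$ is negligible on this time scale and whose martingale part has variance rate at most~$2\kappa$, a Doob-type maximal inequality shows $\widetilde\ell_n$ descends from level $\le2\phi_n$ below~$\phi$ within time~$\sqrt n/4$ with probability tending to~$1$. So, with probability tending to~$1$, $\norm{x_n}$ reaches a level of order $\log n$ within time~$\sqrt n/4$. Iterating this argument along a decreasing sequence of thresholds and of (polylogarithmic) time budgets — possible because Proposition~\ref{prop:control-homogenization} only requires $c_1\delta^4\phi$ to dominate $\log t$ — one brings $\norm{x_n}$ down from $\phi_n$ to a level~$\psi_n$ that still tends to~$+\infty$, but as slowly as one wishes, using total time $\ll\sqrt n$ and incurring total failure probability $o(1)$.

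\emph{Endgame, and the main obstacle.} It then remains to empty the network starting from $\norm w\le\psi_n$ with $\psi_n\to+\infty$ arbitrarily slowly. Here Proposition~\ref{prop:control-homogenization} provides nothing useful, and — since $x_n$ is only null-recurrent when $\rho_n=1$ (Proposition~\ref{prop:critical-case}), so that $T_0(x_n)$ has a heavy tail — a separate, soft argument is needed. The ingredients I would combine are: (i) a uniform lower bound $\P_n^w(T_0(x_n)\le t_0)\ge p_n$ for $\norm w\le\psi_n$, valid for a fixed~$t_0$ and with $p_n\to0$ only as slowly as $\psi_n$ grows, because with probability bounded away from~$0$ there is no arrival in $[0,t_0]$ and, since $\sum_k\pi_k\mu_{n,k}\ge\underline\pi\,\mu_n$ is bounded away from~$0$, the $\le\psi_n$ users receive (after moving according to~$Q$) enough aggregate service to all depart; and (ii) the fact that whenever the population climbs back up, the lower bound $\norm{x_n}\ge\widetilde\ell_n$ together with the descent of the previous step brings it back to~$\le\psi_n$ within time $\ll\sqrt n$. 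Combining (i) and (ii) should produce a number of essentially independent emptying attempts tending to~$+\infty$ within $[0,\sqrt n]$, so that the probability that they all fail tends to~$0$. Calibrating the scales so that this closes — in particular so that the cumulative failure probability of the repeated re-descents in~(ii) stays $o(1)$ against the number of attempts needed in~(i) — is the delicate part of the argument, and I expect it to be the main obstacle.
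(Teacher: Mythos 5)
Your descent strategy — homogenize, then drive $\norm{x_n}$ down through a decreasing sequence of thresholds using the $M/M/1$ lower bound of Lemma~\ref{lemma:coupling-MM1} together with the homogenization control of Proposition~\ref{prop:control-homogenization} — is in spirit the same as the paper's, which implements it via a dyadic halving scheme: with $2^{M_n-1}<\phi_n\le 2^{M_n}$, it iterates the halving time $S(f)=T^\downarrow(f,\norm{f(0)}/2)$ from level $2^{M_n}$ down to level $1$, allotting to the $m$th step a budget $\varphi_{n,m}=\exp(M_n+2^{(M_n-m)/4})$ chosen so that $\sum_m\varphi_{n,m}<\sqrt n$, and reducing the lemma to the summability of the series $U_m=\sup_n\max_{y\in\Tcal(m)}\P_n^y(S\ge\exp(2^{m/4}))$. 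The part of your argument that uses Proposition~\ref{prop:control-homogenization} maps onto the paper's proof that $U_m$ is summable.

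Where the two arguments genuinely diverge is in the endgame, and this is where your proposal has a real gap — one you are honest enough to flag yourself. You reduce to emptying from a slowly-diverging level $\psi_n$ and propose repeated ``emptying attempts.'' Two concrete problems. First, your estimate (i) is quantitatively off: with $\psi_n$ users and a \emph{fixed} window $t_0$, the number of potential departures is $\Pcal(\mu_n t_0)$, a tight random variable, so the probability of emptying is at least as small as $\P(\Pcal(\mu_n t_0)\ge\psi_n)$, which decays \emph{super-exponentially} in $\psi_n$ (roughly $c^{\psi_n}/\psi_n!$) — not ``only as slowly as $\psi_n$ grows.'' This makes the calibration between the number of attempts $N_n$, the decay of $p_n$, the cost of re-descent and re-homogenization between attempts, and the failure probability of the repeated re-descents, much more delicate than your sketch acknowledges; it may well be feasible but you have not carried out the bookkeeping, and the paper does not either. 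Second, the attempts are not independent, and after a failed attempt the process need not be homogenized, so each re-descent requires fresh control; your sketch does not account for the accumulation of these errors.

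The paper sidesteps all of this with a cleaner idea that your proposal is missing. The dyadic budget $\exp(M_n+2^{m/4})$ diverges with $n$ for every \emph{fixed} $m$, because $M_n\to+\infty$. Since for fixed $m$ the set $\Tcal(m)$ is finite, and since the parameters $(\lambda_{n,k},\mu_{n,k})$ live in a compact set, one can extract a subsequence along which $x_n$ converges weakly to a Markov process $x_\infty$ with $\norm{\lambda_\infty}=\norm{\mu_\infty}$; Proposition~\ref{prop:critical-case} then gives that $x_\infty$ is recurrent, hence $S(x_\infty)<\infty$ a.s., and so $\P_n^y(S\ge \exp(M_n+2^{m/4}))\to 0$ for each fixed $m$. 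That weak-convergence/compactness step, invoking the null-recurrence of the critical limit, is precisely what replaces your ``independent emptying attempts'' and closes the argument without any calibration. Your proposal should be completed by incorporating that step (or by actually carrying through the calibration you defer).
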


\begin{proof}
	Let $M_n \in \N$ be such that $2^{M_n-1} < \phi_n \leq 2^{M_n}$. By monotonicity of $T_0$ in the size of the initial state, we have
	\[ \max_{y: \norm{y} = \phi_n} \ \P_n^y \left( T_0 \geq \sqrt n \right) \leq \max_{y \in \Tcal(M_n)} \ \P_n^y \left( T_0 \geq \sqrt n \right) \]
	where from now on $\Tcal(m) = \{ y \in \N^K: \norm{y} = 2^{m} \}$ and we omit the dependency of the functional operators when they are applied at $x_n$, so that $T_0 = T_0(x_n)$. Define $S(f) = T^\downarrow(f, \norm{f(0)}/2)$ and $\varphi_{n,m} = \exp(M_n + 2^{(M_n-m)/4})$, and note that since $\rho_n \leq 1$ by assumption, $S$ is almost surely finite since $x_n$ is recurrent by Proposition~\ref{prop:critical-case}. The relation $T_0 = S + T_0 \circ \theta_S$ gives for any $n \geq 1$ and $y \in \N^K$
	\begin{align*}
		\P_n^y \left( T_0 \geq \sqrt n \right) & = \P_n^y \left( T_0 \geq \sqrt n, S \geq \varphi_{n,0} \right) + \P_n^y \left( T_0 \geq \sqrt n, S \leq \varphi_{n,0} \right)\\
		& \leq \P_n^y \left( S \geq \varphi_{n,0} \right) + \P_n^y \left( T_0 \circ \theta_S \geq \sqrt n - \varphi_{n,0} \right).
	\end{align*}
	
	Since moreover $\P_n^y(x_n(S) \in \Tcal(M_n-1)) = 1$ for $y \in \Tcal(M_n)$, the strong Markov property at $S$ gives
	\[ \max_{y \in \Tcal(M_n)} \P_n^y \left( T_0 \geq \sqrt n \right) \leq \max_{y \in \Tcal(M_n)} \P_n^y \left( S \geq \varphi_{n,0} \right) + \max_{y \in \Tcal(M_{n}-1)} \P_n^y \left( T_0 \geq \sqrt n - \varphi_{n,0} \right).
	\]
	
	Iterating $M_n+1$ times, we obtain
	\begin{multline*}
		\max_{y \in \Tcal(M_n)} \P_n^y \left( T_0 \geq \sqrt n \right) \leq \sum_{m=0}^{M_n} \ \max_{y \in \Tcal(M_{n}-m)} \P_n^y \left( S \geq \varphi_{n,m} \right)\\
		+ \max_{y \in \Tcal(0)} \ \P_n^y \left( T_0 \geq \sqrt n - \varphi_{n,0} - \cdots - \varphi_{n,M_n} \right).
	\end{multline*}
	Now $\Tcal(0) = \{0\}$ and $T_0$ under~$\P_n^0$ is equal to~0. Since in addition $2^{M_n} \leq 2 \phi_n \leq 2 (\log n)^{2}$, we obtain $2^{M_n/4} \leq 2^{1/4} (\log n)^{1/2}$ and so
	\[ \varphi_{n,0} + \cdots + \varphi_{n,M_n} \leq (M_n+1) \exp\left( M_n + 2^{M_n/4} \right) < \exp \left( (1/2) \log n \right) =  \sqrt n \]
	for $n$ large enough. For those $n$, we get
	\[ \max_{y \in \Tcal(0)} \ \P_n^y \left( T_0 \geq \sqrt n - \varphi_{n,0} - \cdots - \varphi_{n,M_n} \right) = 0 \]
	and finally, we obtain after a change of variables
	\begin{equation} \label{eq:back}
		\max_{y \in \Tcal(M_n)} \P_n^y \left( T_0 \geq \sqrt n \right) \leq \sum_{m=0}^{M_n} \ \max_{y \in \Tcal(m)} \ \P_n^y \left( S \geq \exp(M_n + 2^{m/4}) \right).
	\end{equation}
	
	Let us justify that for fixed $m \geq 0$ and $y \in \N^K$ we have
	\begin{equation} \label{eq:inter}
		\lim_{n \to +\infty} \P_n^y \left( S \geq \exp(M_n + 2^{m/4}) \right) = 0.
	\end{equation}
	
	First, let $(u(n))$ be a subsequence such that
	\[ \lim_{n \to +\infty} \P_{u(n)}^y \left( S \geq \exp(M_{u(n)} + 2^{m/4}) \right) = \limsup_{n \to +\infty} \P_n^{y} \left( S \geq \exp(M_n + 2^{m/4}) \right). \]

	Since the $[0,\infty)^{2K}$-valued sequence $((\lambda_{u(n),k}, \mu_{u(n),k}, 1 \leq k \leq K), n \geq 1)$ lives in a compact set as a consequence of the heavy-traffic assumption, we can find a subsequence $(v(n))$ of $(u(n))$ and $\lambda_{\infty}, \mu_{\infty} \in [0,\infty)^{K}$ such that $\lambda_{v(n),k} \to \lambda_{\infty, k}$ and $ \mu_{v(n),k} \to \mu_{\infty, k}$ for each $k = 1, \ldots, K$. Because of the heavy-traffic assumption, we have $\norm{\lambda_\infty} = \norm{\mu_\infty}$.
	
It is then not hard to see that the sequence $(x_{v(n)})$ under~$\P_n^{y}$
converges weakly to~$x_\infty$, where $x_\infty$ is the Markov process
with $x_\infty(0) = y$ and generator~$\Omega_\infty$ defined similarly
as $\Omega_n$ but with $\lambda_{n,k}$ and $\mu_{n,k}$ replaced by
$\lambda_{\infty, k}$ and $\mu_{\infty, k}$, respectively.
Since $x_\infty$ lives in $\N^K$ and is piecewise constant, it is not
hard to prove that $S$ is a continuous functional at $x_\infty$
and so the continuous-mapping theorem implies the weak convergence of
the sequence $(S(x_{v(n)}))$ towards $S(x_\infty)$.
In particular,
	\[ \lim_{n \to +\infty} \P_{u(n)}^y \left( S \geq \exp(M_{u(n)} + 2^{m/4}) \right) = \P \left( S(x_\infty) = +\infty \right). \]
	
	Since $\norm{\lambda_\infty} = \norm{\mu_\infty}$, $x_\infty$ is recurrent by Proposition~\ref{prop:critical-case} and so $S(x_\infty)$ is finite almost surely which proves~\eqref{eq:inter}. Since for each fixed $m \geq 1$ the set $\Tcal(m)$ is finite, combining~\eqref{eq:back} and~\eqref{eq:inter} we obtain for any $M \geq 0$
	\[
		\limsup_{n \to +\infty} \left( \max_{y \in \Tcal(M_n)} \P_n^y \left( T_0 \geq \sqrt n \right) \right) \leq \limsup_{n \to +\infty} \left( \sum_{m=M}^{M_n} \max_{y \in \Tcal(m)} \ \P_n^y\left(S \geq \exp(2^{m/4}) \right) \right) \leq \sum_{m \geq M} U_m
	\]
	where
	\[ U_{m} = \sup_{n \geq 1} \left( \max_{y \in \Tcal(m)} \ \P_n^y\left(S \geq \exp(2^{m/4})\right) \right). \]
	
	Thus if we can prove that the series $(U_m)$ is summable, letting $M \to +\infty$ in the previous inequality will show the result.

	For the rest of the proof, fix any $\delta > 0$ such that $\delta \leq \underline \pi$ and $\eta < 2 \varepsilon_0$ where $\eta = \underline \pi \delta^2 / 32$. Let in addition $\tau = \tau(\eta / (4K))$. Then for any $y \in \N^K$, we have
	\begin{multline*}
		\P_n^y \left( S \geq \exp(2^{m/4}) \right) \leq \P_n^y \left( S \geq \exp(2^{m/4}), \norm{r_n(\tau) - \pi} \leq \eta, \left| \norm{x_n(\tau)} - \norm{x_n(0)} \right| \leq m \right)\\
		+ \P_n^y \left( \norm{r_n(\tau) - \pi} \geq \eta \right) + \P_n^y \left( \left| \norm{x_n(\tau)} - \norm{x_n(0)} \right| \geq m \right).
	\end{multline*}
	
	Invoking the Markov property at time~$\tau$ for the first term and~\eqref{eq:coupling-inequalities} together with Lemma~\ref{lemma:coupling-closed-system} for the two last ones, we get
	\begin{multline*}
		\max_{y \in \Tcal(m)} \ \P_n^y \left( S \geq \exp(2^{m/4}) \right) \leq \max_{y \in \Tcal'(m, \eta)} \P_n^y \left( S \geq \exp(2^{m/4}) - \tau \right)\\
		+ \max_{y \in \Tcal(m)} \P_n^y \left( \norm{r_n(\tau) - \pi} \geq \eta \right) + \P \left( \Pcal(\kappa \tau) \geq m \right)
	\end{multline*}
	where $\Tcal'(m,\delta) = \{ y \in \N^K: |\norm{y} - 2^m| \leq m \text{ and } \varrho(y) \leq \eta \}$. The last term of the above upper bound defines a summable series since a Poisson random variable has a finite mean; the second term also by Lemma~\ref{lemma:bound-open} since $\eta < 2 \varepsilon_0$. It remains to control the first term. 

	So consider $y \in \Tcal'(m, \delta)$ and let $v_m = \exp(2^{m/4}) - \tau$ and $\psi_m = 2^{m-1}-m/2$. Since $\norm{y} \geq 2^m-m$ we have $S = T^\downarrow(x_n, \norm{y}/2) \leq T^\downarrow(x_n, \psi_m)$ under~$\P_n^y$ and we obtain
	\begin{multline*}
		\P_n^y \left( S \geq v_m \right) \leq \P_n^y \left( T^\downarrow(x_n, \psi_m) \geq v_m, T^\uparrow(r_n-\pi, \delta) \geq v_m \right)\\
		+ \P_n^y \left( T^\downarrow(x_n, \psi_m) \geq v_m, T^\uparrow(r_n-\pi, \delta) \leq v_m \right).
	\end{multline*}
	
	Since $\delta \leq \underline \pi$, we have $\norm{x_n(t)} = \ell_n(t)$ for all $t \leq T^\uparrow(r_n - \pi, \delta)$ by~\eqref{eq:lower-bound-tilde} and so
	\[
		\P_n^y \left( T^\downarrow(x_n, \psi_m) \geq v_m, T^\uparrow(r_n-\pi, \delta) \geq v_m \right) = \P_n^y \left( T^\downarrow(\ell_n, \psi_m) \geq v_m, T^\uparrow(r_n-\pi, \delta) \geq v_m \right)
	\]
	which implies
	\begin{multline*}
		\max_{y \in \Tcal'(m, \eta)} \P_n^y \left( S \geq v_m \right) \leq \max_{y \in \Tcal'(m, \eta)} \P_n^y \left( T^\downarrow(\ell_n, \psi_m) \geq v_m \right)\\
		+ \max_{y \in \Tcal'(m, \eta)} \P_n^y \left( T^\uparrow(r_n-\pi, \delta) \leq v_m \wedge T^\downarrow(x_n, \psi_m) \right).
	\end{multline*}
	
	As for the second term, it is easily checked that all the assumptions of Proposition~\ref{prop:control-homogenization} are satisfied, at least for $m$ large enough, since $\delta$ and $\eta < 2\varepsilon_0$ are fixed and both $v_m$ and $2^{m-1}$ grow without bounds with~$m$. Proposition~\ref{prop:control-homogenization} provides a bound uniform in $y \in \Tcal'(m,\delta)$ and $n \geq 1$ which defines a summable series in~$m$. Hence to complete the proof it remains to show that
\begin{equation}
\label{eq:sum}
\sum_{m \geq M} \sup_{n \geq 1} \left\{ \P_n^{y_m} \left( T^\downarrow(\ell_n, \psi_m) \geq v_m \right) \right\} < +\infty
\end{equation}
with $y_m = (2^m+m) e_1$, using monotonicity of $T^\downarrow(\ell_n, \psi_m)$ in the size of the initial state and the fact that $\ell_n$ only depends on the initial state through its size. Note that
	\[
		\P_n^{y_m} \left( T^\downarrow(\ell_n, \psi_m) \geq v_m \right) = \P_n^{y_m} \left( T^\downarrow(\widetilde \ell_n, \psi_m) \geq v_m \right) = \P_n^{0} \left( T^\downarrow(\widetilde \ell_n, \psi'_m) \geq v_m \right)
	\]
	with $\psi_m' = \psi_m - 2^m - m = 2^{m-1} - 2^m - 3m/2$.
It is well-known that $T^\downarrow(\widetilde \ell_n, \psi'_m)$
under~$\P_n^0$ scales like $(\psi'_m)^2 \approx 2^{2m}$ for large~$m$
and~$n$, which is negligible compared to $v_m \approx \exp(2^{m/4})$.
In the case $\rho_n = 1$, we can use the reflection principle
and establish an exponential upper bound as we have done in the proof
of Proposition~\ref{prop:critical-case}.
When $\rho_n < 1$ we can couple $\widetilde \ell_n$ with a critical
random walk~$\ell'_n$ such that $\ell'_n \geq \widetilde \ell_n$.
In particular we have $T^\downarrow(\widetilde \ell_n, \psi_m) \leq
T^\downarrow(\ell'_n, \psi_m)$
and so $\P_n^{y_m}( T^\downarrow(\widetilde \ell_n, \psi_m) \geq v_m ) \leq
\P_n^{y_m}( T^\downarrow(\ell'_n, \psi_m) \geq v_m)$,
where this last term obeys to an exponential upper bound.
This proves~\eqref{eq:sum} and completes the proof.
\end{proof}

\begin{prop}
\label{prop:conv-e-t}
	For any $\varepsilon > 0$, the sequences $(e_\varepsilon^\uparrow(X_n), n \geq 1)$ and $((T_0 \circ e_\varepsilon^\uparrow)(X_n), n \geq 1)$ under~$\P_n^0$ converge weakly to $e_\varepsilon^\uparrow(\underline B \pi)$ and $(T_0 \circ e_\varepsilon^\uparrow)(\underline B)$ under~$\P^0$, respectively.
\end{prop}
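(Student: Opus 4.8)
The plan is to establish the following reduced statement: for every deterministic sequence $(y_n)$ with $\norm{y_n} = \lceil \varepsilon n \rceil$ and $\varrho(y_n) \to 0$, the pair $(\sigma(X_n), T_0(X_n))$ under $\P_n^{y_n}$ converges weakly to $(\sigma(\underline B \pi), T_0(\underline B))$ under $\P^\varepsilon$. The proposition then follows: setting $\varpi_n = x_n(T^\uparrow(x_n, \varepsilon n))$, the strong Markov property at $T^\uparrow(x_n, \varepsilon n)$ identifies the law of $(e_\varepsilon^\uparrow(X_n), (T_0 \circ e_\varepsilon^\uparrow)(X_n))$ under $\P_n^0$ with that of $(\sigma(X_n), T_0(X_n))$ under $\P_n^{\varpi_n}$, where $\norm{\varpi_n} = \lceil \varepsilon n \rceil$ because $\norm{x_n}$ has unit jumps, and $\varrho(\varpi_n) \to 0$ in probability by Lemma~\ref{lemma:initial-homogenization}; a routine derandomization (condition on $\varpi_n$, use a near-maximizing deterministic sequence together with the concentration of $\varpi_n$) passes from deterministic $y_n$ to $\varpi_n$. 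Finally, $(\sigma(\underline B \pi), T_0(\underline B))$ under $\P^\varepsilon$ has the same law as $(e_\varepsilon^\uparrow(\underline B\pi), (T_0 \circ e_\varepsilon^\uparrow)(\underline B))$ under $\P^0$, by the strong Markov property of reflected Brownian motion and the fact that the direction of $\underline B\pi$ is constantly $\pi$.

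\emph{Step 1: staying homogenized down to $(\log n)^2$ users, and then vanishing in negligible time and space.} Fix $\delta \in (0, \underline\pi)$ with $\eta = \underline\pi \delta^2/32 < 2\varepsilon_0$, and put $\phi_n = \lfloor (\log n)^2 \rfloor$. For $n$ large, all hypotheses of Proposition~\ref{prop:control-homogenization} hold with $t$ replaced by $n^2 t$ and $\phi = \phi_n$, and the resulting bound is of order $\exp\bigl(2\log n - c_1\delta^4 (\log n)^2 + o((\log n)^2)\bigr)$, so that for every $t > 0$
\[ \lim_{n \to +\infty} \P_n^{y_n}\bigl( T^\uparrow(r_n - \pi, \delta) \le n^2 t \wedge T^\downarrow(x_n, \phi_n) \bigr) = 0 . \]
On the complementary event, $\norm{r_n - \pi} < \delta \le \underline\pi$ on $[0, n^2 t \wedge T^\downarrow(x_n,\phi_n)]$, hence no node is empty there by~\eqref{eq:lower-bound-tilde}, so $\norm{x_n} = \ell_n$; setting $T_n = T^\downarrow(X_n, \phi_n/n) = n^{-2} T^\downarrow(x_n,\phi_n)$ this reads $\norm{X_n(s)} = L_n(s) = \widetilde L_n(s)$ and $\norm{X_n(s) - \pi L_n(s)} < \delta L_n(s)$ for $s \le t \wedge T_n$ (using $\ell_n = \widetilde\ell_n$ up to $T^\downarrow(\widetilde\ell_n,0) \ge T^\downarrow(\ell_n,\phi_n)$). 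For the final phase, the strong Markov property at $T^\downarrow(x_n,\phi_n)$ (the state having norm exactly $\phi_n$), Lemma~\ref{lemma:crude-upper-bound} and~\eqref{eq:deviation-P} give, with probability tending to $1$, both $T_0(x_n) \le T^\downarrow(x_n,\phi_n) + \sqrt n$ and $\sup_{s \in [T^\downarrow(x_n,\phi_n),\, T_0(x_n)]}\norm{x_n(s)} \le \phi_n + 2\kappa\sqrt n$; equivalently $0 \le T_0(X_n) - T_n \le n^{-3/2}$ and $\sup_{s \in [T_n, T_0(X_n)]}\norm{X_n(s)} = O(n^{-1/2})$. Since moreover $\norm{x_n} \ge \ell_n$ (Lemma~\ref{lemma:coupling-MM1}) and $\widetilde\ell_n$ reaches $0$ inside this final phase, one also gets $0 \le T_0(X_n) - T_0(L_n) \le n^{-3/2}$ with $T_0(L_n) = T^\downarrow(\widetilde L_n, 0)$.

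\emph{Step 2: passage to the limit.} Let $G_n$ be the intersection of the above events, so $\P_n^{y_n}(G_n) \to 1$, and compare $\sigma(X_n)$ with $\pi\, \sigma(\widetilde L_n)$ (where $\sigma(\widetilde L_n)$ is $\widetilde L_n$ stopped at its first zero $T^\downarrow(\widetilde L_n, 0)$). On $G_n$: for $s \le T_n$ the two agree up to $\delta \sup_{u \le t}\widetilde L_n(u)$ (there $X_n = R_n L_n = R_n \widetilde L_n$ with $\norm{R_n - \pi} < \delta$); for $s \in [T_n, T_0(X_n)]$ both have norm $o(1)$ (the former by the bound above, the latter since $\widetilde L_n(T_n) = \phi_n/n \to 0$ and $\widetilde L_n$ varies by $o(1)$ over an interval of length $\le n^{-3/2}$); and for $s \ge T_0(X_n)$ both are frozen within $o(1)$ of $0$. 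Hence $\sup_{s \le t}\norm{\sigma(X_n)(s) - \pi\,\sigma(\widetilde L_n)(s)} \le \delta \sup_{u \le t}\widetilde L_n(u) + o_{\P}(1)$ and $|T_0(X_n) - T^\downarrow(\widetilde L_n, 0)| = o_{\P}(1)$. Now $\widetilde L_n \Rightarrow B$ under $\P^\varepsilon$ (the classical functional limit theorem, as invoked in Lemma~\ref{lemma:initial-homogenization}), and since $f \mapsto T^\downarrow(f, 0)$ and $f \mapsto \sigma(f)$ are almost surely continuous at the paths of $B$ issued from a positive level (Proposition~VI.2.11 in~\cite{Jacod03:0}), the continuous-mapping theorem yields $\bigl(\pi\,\sigma(\widetilde L_n), T^\downarrow(\widetilde L_n, 0)\bigr) \Rightarrow \bigl(\pi\,\sigma(B), T^\downarrow(B, 0)\bigr) = \bigl(\sigma(\underline B \pi), T_0(\underline B)\bigr)$ under $\P^\varepsilon$ (using $\underline B = B$ on $[0, T^\downarrow(B,0)]$). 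Letting $n \to \infty$ and then $\delta \to 0$ (the quantity $\delta \sup_{u\le t}\widetilde L_n(u)$ being tight) and applying Slutsky's lemma gives $(\sigma(X_n), T_0(X_n)) \Rightarrow (\sigma(\underline B\pi), T_0(\underline B))$ under $\P^\varepsilon$, as required.

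\emph{Main obstacle.} The delicate part is Steps~1--2: one has to splice the three time segments $[0, T_n]$, $[T_n, T_0(X_n)]$ and $[T_0(X_n), \infty)$ into a single uniform estimate on an event of probability tending to $1$, while the natural comparison process $\pi\,\sigma(\widetilde L_n)$ is stopped at $T^\downarrow(\widetilde L_n, 0)$, a time that only asymptotically coincides with $T_0(X_n)$. Reconciling these two stopping times is exactly what forces the joint use of the coupling $\norm{x_n} = \ell_n$, valid up to the exit time of the homogenized region (controlled via Proposition~\ref{prop:control-homogenization}), and of the fact that $\norm{X_n}$ stays of order $n^{-1/2}$ throughout the final phase (controlled via Lemma~\ref{lemma:crude-upper-bound} and~\eqref{eq:deviation-P}); the almost-sure continuity of $f \mapsto (T^\downarrow(f,0), \sigma(f))$ at Brownian paths from a positive level is then what converts the weak limit of $\widetilde L_n$ into the weak limit of the stopped process. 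A minor additional point is the uniformity over homogenized initial conditions needed in the derandomization of $\varpi_n$, together with the interchange of the limits $n \to \infty$ and $\delta \to 0$.
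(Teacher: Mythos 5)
Your proof is correct and follows essentially the same route as the paper's: reduce to a (near-)deterministic homogenized initial state of size $\approx \varepsilon n$ via Lemma~\ref{lemma:initial-homogenization}, compare $(\sigma(X_n), T_0(X_n))$ with the stopped $M/M/1$ process through the coupling of Lemma~\ref{lemma:coupling-MM1} and \eqref{eq:lower-bound-tilde}, control the coupling down to level $\phi_n = \lfloor(\log n)^2\rfloor$ via Proposition~\ref{prop:control-homogenization}, show the residual phase below $\phi_n$ is negligible in both time and space via Lemma~\ref{lemma:crude-upper-bound} together with a Poisson deviation bound, and conclude by a convergence-together argument after letting $\delta \to 0$. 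The only deviations are organizational (you derandomize the initial state up front whereas the paper does so inline, bundle all estimates into a single event $G_n$, and compare against $\widetilde L_n$ rather than $L_n$, which coincide on the stopped path); the key lemmas and the structure of the estimate are identical to the paper's.
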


\begin{proof}
In the rest of the proof fix any $\varepsilon > 0$ and let $\varpi_n$
be the law of $x_n(T^\uparrow(x_n, \varepsilon))$ under~$\P_n^0$.
In view of the strong Markov property, the convergence properties
claimed are equivalent to the convergence of the two sequences
$(\sigma(X_n), n \geq 1)$ and $(T_0(X_n)$, $n \geq 1)$
under~$\P_n^{\varpi_n}$ towards $\sigma(\underline B \pi)$
and $T_0(\underline B)$ under~$\P^\varepsilon$, respectively.
	
	We know that $(\sigma(L_n \pi))$ and $(T_0(L_n))$ under $\P_n^{\varpi_n}$ converge weakly to $\sigma(\underline B \pi)$ and $T_0(\underline B)$ under~$\P^\varepsilon$, respectively, and we want to transfer this result to~$X_n$ using Theorem~$3.1$ in Billingsley~\cite{Billingsley99:0}, sometimes referred to as a ``convergence-together'' result. Thus we only have to prove that for any $\beta > 0$,
	\[ \lim_{n \to +\infty} \P_n^{\varpi_n} \left( \sup_{t \geq 0} \norm{\sigma(X_n)(t) - \sigma(L_n \pi)(t)} \geq \beta \right) = \lim_{n \to +\infty} \P_n^{\varpi_n} \left( |T_0(X_n) - T_0(L_n)| \geq \beta \right) = 0. \]
	
	Define $\phi_n = \lfloor (\log n)^2 \rfloor$ as well as $T_{n,X} = T^\downarrow(X_n, n^{-1} \phi_n)$ and $T_{n,L} = T^\downarrow(L_n, n^{-1} \phi_n)$: then
	\[ T_0(X_n) = T_{n,X} + (T_0 \circ \theta_{T_{n,X}}) (X_n) \ \text{ and } \ T_0(L_n) = T_{n,L} + (T_0 \circ \theta_{T_{n,L}}) (L_n) \]
	so that
	\begin{multline*}
		\P_n^{\varpi_n} \left( |T_0(X_n) - T_0(L_n)| \geq \beta \right) \leq \P_n^{\varpi_n} \left( |T_{n,X} - T_{n,L}| > 0 \right)\\
		+ \P_n^{\varpi_n} \left( (T_0 \circ \theta_{T_{n,X}})(X_n) \geq \beta / 3 \right) + \P_n^{\varpi_n} \left( (T_0 \circ \theta_{T_{n,L}})(L_n) \geq \beta / 3 \right).
	\end{multline*}
	Note that Lemma~\ref{lemma:coupling-MM1} implies that $\P_n^{\varpi_n}(|T_{n,X} - T_{n,L}| > 0) = \P_n^{\varpi_n}(T_{n,X} > T_{n,L})$. Together with the strong Markov property at $T_{n,X}$ and $T_{n,L}$, this gives
	\begin{multline*}
		\P_n^{\varpi_n} \left( |T_0(X_n) - T_0(L_n)| \geq \beta \right) \leq \P_n^{\varpi_n} \left( T_{n,X} > T_{n,L} \right) + \max_{y: \norm{y} = \phi_n} \P_n^y \left( T_0(x_n) \geq n^2 \beta / 3 \right)\\
		+ \max_{y: \norm{y} = \phi_n} \P_n^y \left( T_0(\ell_n) \geq  n^2 \beta / 3 \right).
	\end{multline*}
	
Since the sequence $(\phi_n^{-2} T_0(\ell_n))$ under $\P_n^{\phi_n e_1}$ converges in distribution to a non-degenerate random variable, the last term goes to~$0$ since $n^2 \gg \phi_n^{2}$. The second
term goes to~$0$ by Lemma~\ref{lemma:crude-upper-bound}, and so it remains to control the first term.  In the rest of the proof, let
$\delta > 0$ such that $\delta \leq \underline \pi$ and $\eta = \underline \pi \delta^2 / 32 < 2 \varepsilon_0$: we have
	\[
		\P_n^{\varpi_n} \left( T_{n,X} > T_{n,L} \right) \leq \P_n^{\varpi_n} \left( \norm{R_n(0) - \pi} \geq \eta \right) + \max_{y \in \Tcal_n} \ \P_n^y \left( T_{n,X} > T_{n,L} \right)
	\]
	with $\Tcal_n = \{ y \in \N^K: \norm{y} = \lfloor n \varepsilon \rfloor \text{ and } \varrho(y) \leq \eta \}$. The first term goes to~0 by Lemma~\ref{lemma:initial-homogenization} and so we have at that point
	\[
		\limsup_{n \to +\infty} \ \P_n^{\varpi_n} \left( |T_0(X_n) - T_0(L_n)| \geq \beta \right) \leq \limsup_{n \to +\infty} \left( \max_{y \in \Tcal_n} \ \P_n^y \left( T_{n,X} > T_{n,L} \right) \right).
	\]
	
	Thanks to Lemma~\ref{lemma:coupling-MM1}, one sees that $T_{n,X} > T_{n,L}$ implies that there was a time $t < T_{n,L}$ such that $x_{n,k}(t) = 0$ for some~$k$. At that time we have $\norm{r_n(t) - \pi} \geq \underline \pi \geq \delta$ and so
	\[ \P_n^y \left( T_{n,X} > T_{n,L} \right) \leq \P_n^y \left( T^\uparrow(r_n - \pi, \delta) \leq T_{n,L} \right). \]
	
	Further we have
	\[ \P_n^y \left( T^\uparrow(r_n - \pi, \delta) \leq T_{n,L} \right) \leq \P_n^y \left( n \leq T_{n,L} \right) + \P_n^y \left( T^\uparrow(r_n - \pi, \delta) \leq n \wedge T_{n,L} \right). \]
	
	The first term goes to~0 uniformly in~$y$ with $\norm{y} = \lfloor \varepsilon n \rfloor$ since the sequence $(T_{n,L})$ under~$\P_n^y$ with $y \in \Tcal_n$ converges in distribution, while the second term can be rewritten as
	\[ \P_n^y \left( T^\uparrow(r_n - \pi, \underline \pi) \leq n \wedge T_{n,L} \right) = \P_n^y \left( T^\uparrow(r_n - \pi, \delta) \leq n^{3} \wedge T^\downarrow(\ell_n, \phi_n) \right) \]
	which goes to~0 uniformly in $y \in \Tcal_n$ by Proposition~\ref{prop:control-homogenization} (using $T^\downarrow(\ell_n, \phi_n) \leq T^\downarrow(x_n, \phi_n)$). This proves the result on $T_0(X_n)$, the result on $\sigma(X_n)$ follows along the same lines but at the expense of more technical details. Lemma~\ref{lemma:coupling-MM1} and~\eqref{eq:lower-bound-tilde} imply that $T_{n,X} = T_{n,L}$ and $L_n(T_{n,L}) = \norm{X_n(T_{n,X})} = n^{-1} \phi_n$ in the event $\{ T^\uparrow(R_n - \pi, \delta) \geq T_{n,L} \}$, so that the strong Markov property gives
	\begin{multline*}
		\P_n^{\varpi_n} \left( \sup_{t \geq 0} \norm{\sigma(X_n)(t) - \sigma(L_n \pi)(t)} \geq \beta \right) \leq \P_n^{\varpi_n} \left( T^\uparrow(R_n - \pi, \delta) \geq T_{n,L} \right)\\
		+ \P_n^{\varpi_n} \left( \sup_{0 \leq t \leq T_{n,L}} \norm{X_n(t) - L_n(t) \pi} \geq \beta/3 \right) + \max_{y \in \Phi_n} \ \P_n^y \left( \sup_{0 \leq t \leq T_0(X_n)} \norm{X_n(t)} \geq \beta/3 \right)\\
		+ \max_{y \in \Phi_n} \ \P_n^y \left( \sup_{0 \leq t \leq T_0(L_n)} L_n(t) \geq \beta/3 \right)
	\end{multline*}
	where $\Phi_n = \{ y \in \N^K: \norm{y} = \phi_n \}$. We have already proved earlier in the proof that the first term vanishes. The last term can be seen to vanish invoking the convergence of~$(L_n)$ towards a Brownian motion. The third term vanishes because for any $y \in \Phi_n$,
	\begin{align*}
		\P_n^y \left( \sup_{0 \leq t \leq T_0(X_n)} \norm{X_n(t)} \geq \beta/3 \right) & = \P_n^y \left( \sup_{0 \leq t \leq T_0(x_n)} \norm{x_n(t)} \geq n\beta/3 \right)\\
		& \leq \P_n^y \left( T_0(x_n) \geq \sqrt n \right) + \P_n^y \left( a_n(\sqrt n) \geq n \beta / 3 - \phi_n \right)
	\end{align*}
	and both terms go to~0 uniformly in $y \in \Phi_n$, the first one using Lemma~\ref{lemma:crude-upper-bound} and the second one using Markov inequality. It remains to show that the second term also vanishes.

	Let $S_n = \sup_{[0, T_{n,L}]} L_n$: then for any $0 \leq t \leq T_{n,L}$, one has
	\begin{align*}
		\norm{X_n(t) - L_n(t) \pi} & \stackrel{(1)}{\leq} \norm{X_n(t)} \norm{R_n(t) - \pi} + \norm{\pi} \left| \norm{X_n(t)} - L_n(t) \right|\\
		& \stackrel{(2)}{\leq} L_n(t) \norm{R_n(t) - \pi} + 2 \left| \norm{X_n(t)} - L_n(t) \right|\\
		& \stackrel{(3)}{\leq} S_n \norm{R_n(t) - \pi} + 2 \left| \norm{X_n(t)} - L_n(t) \right|
	\end{align*}
	where $(1)$ follows by adding and subtracting $\pi \norm{X_n(t)}$ and using the triangular inequality, $(2)$ follows from $\norm{X_n(t)} \leq L_n(t) + |\norm{X_n(t)} - L_n(t)|$ together with $\norm{R_n(t) - \pi} \leq 1$ and $\norm \pi = 1$ and $(3)$ is by definition of $S_n$, since $t \leq T_{n,L}$. Using this upper bound together with standard manipulations, we get the following upper bound, valid for any $s > 0$:
	\begin{multline*}
		\P_n^{\varpi_n} \left( \sup_{0 \leq t \leq T_{n,L}} \norm{X_n(t) - \pi L_n(t)} \geq \beta/3 \right) \leq \P_n^{\varpi_n} \left( \sup_{0 \leq t \leq T_{n,L}} \norm{R_n(t) - \pi} \geq \beta / (6 s) \right)\\
		+ \P_n^{\varpi_n}(S_n \geq s) + \P_n^{\varpi_n} \left( \sup_{0 \leq t \leq T_{n,L}} \left| \norm{X_n(t)} - L_n(t) \right| > 0 \right).
	\end{multline*}
	
	Similarly as before the first and last terms go to~0, so we are left with
	\[
		\limsup_{n \to +\infty} \P_n^{\varpi_n} \left( \sup_{0 \leq t \leq T_{n,L}} \norm{X_n(t) - \pi L_n(t)} \geq \beta \right) \leq \limsup_{n \to +\infty} \P_n^{\varpi_n}(S_n > s).
	\]
	
	Letting $s \to +\infty$ completes the proof, since the sequence $(S_n)$ under $\P_n^{\varpi_n}$ converges weakly to $\sup_{[0, T_0(B)]} B$~under $\P^\varepsilon$.
\end{proof}

\subsection{Convergence of $(g_\varepsilon(X_n), n \geq 1)$}
\label{sub:conv-g}

To complete the proof of Theorem~\ref{thm:cv-process} based on Theorem~$4$
in Lambert and Simatos~\cite{Lambert12:0} it remains to be shown that
$g_\varepsilon(X_n) \Rightarrow g_\varepsilon(\underline B)$.

\begin{prop}
\label{prop:conv-g}
	For any $\varepsilon > 0$, the sequence $(g_\varepsilon(X_n))$ under~$\P_n^0$ converges weakly to $g_\varepsilon(\underline B)$ under~$\P^0$.
\end{prop}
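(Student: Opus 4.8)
The plan is to trap $g_\varepsilon(X_n)$ between two random variables having the common limit $g_\varepsilon(\underline B)$. For the upper bound, Lemma~\ref{lemma:coupling-MM1} gives $0 \leq \ell_n \leq \norm{x_n}$, so every zero of $\norm{x_n}$ is a zero of $\ell_n$ and $T^\uparrow(x_n, \varepsilon n) \leq T^\uparrow(\ell_n, \varepsilon n)$; hence $g_{\varepsilon n}(\norm{x_n}) \leq g_{\varepsilon n}(\ell_n)$ and therefore $g_\varepsilon(X_n) \leq g_\varepsilon(L_n)$ pathwise. Since $L_n \Rightarrow \underline B$ and $g_\varepsilon$ is almost surely continuous at $\underline B$ (the first excursion of $\underline B$ reaching level $\varepsilon$ being regular --- this is one of the conditions under which the framework of~\cite{Lambert12:0} applies to the $M/M/1$ queue itself), the continuous-mapping theorem gives $g_\varepsilon(L_n) \Rightarrow g_\varepsilon(\underline B)$, whence $\limsup_n \P_n^0(g_\varepsilon(X_n) > t) \leq \P^0(g_\varepsilon(\underline B) > t)$ at every continuity point~$t$.

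For the lower bound, recall that $x_n$ is recurrent (by Proposition~\ref{prop:critical-case} when $\rho_n = 1$, by positive recurrence otherwise), so under $\P_n^0$ the process $\norm{x_n}$ decomposes into an i.i.d.\ sequence of excursions away from~$0$. Fix $\varepsilon' \in (0, \varepsilon)$ and call an excursion \emph{medium} if it reaches level $\varepsilon' n$ but not $\varepsilon n$. Let $M_n$ be the number of medium excursions occurring before the first excursion that reaches level $\varepsilon n$; since every $\varepsilon n$-reaching excursion is also $\varepsilon' n$-reaching, $M_n$ is geometric with parameter $r_n$, the probability that an $\varepsilon' n$-reaching excursion also reaches $\varepsilon n$. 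For $1 \leq j \leq M_n$ let $\gamma_{n,j}$ be the time spent, in the $j$-th medium excursion, between its first visit to level $\varepsilon' n$ and its subsequent return to~$0$. All these medium excursions occur before $g_{\varepsilon n}(\norm{x_n})$, and each contributes at least $\gamma_{n,j}$ to it, so that
\[ g_\varepsilon(X_n) \;\geq\; n^{-2} \sum_{j=1}^{M_n} \gamma_{n,j} \;=:\; Z_n. \]
Using the strong Markov property at the successive return times to~$0$, one checks that, conditionally on $M_n$, the $\gamma_{n,j}$ are i.i.d., with $n^{-2}\gamma_{n,1}$ distributed as $(T_0 \circ e_{\varepsilon'}^\uparrow)(X_n)$ under $\P_n^0$ conditioned on the event $A_n := \{ \sup_{t \geq 0} \norm{e_{\varepsilon'}^\uparrow(X_n)(t)} < \varepsilon \}$, while $r_n = \P_n^0(A_n^c)$.

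Now we pass to the limit in $Z_n$. By Proposition~\ref{prop:conv-e-t} applied with level $\varepsilon'$, the pair $(e_{\varepsilon'}^\uparrow(X_n), (T_0 \circ e_{\varepsilon'}^\uparrow)(X_n))$ converges weakly under $\P_n^0$ to $(e_{\varepsilon'}^\uparrow(\underline B \pi), (T_0 \circ e_{\varepsilon'}^\uparrow)(\underline B))$ under $\P^0$ (the joint convergence following from the convergence-together argument used to prove that proposition, the corresponding quantities for $L_n$ being functionals of $L_n$). Applying the continuous functional $f \mapsto \sup_{t \geq 0}\norm{f(t)}$ and noting that $\P^0(\sup_{t\geq0}\norm{e_{\varepsilon'}^\uparrow(\underline B\pi)(t)} = \varepsilon) = 0$, we obtain $r_n \to r_{\varepsilon'} := \P^0(\sup_{t\geq0}\norm{e_{\varepsilon'}^\uparrow(\underline B\pi)(t)} \geq \varepsilon) \in (0,1)$ together with the weak convergence of the law of $n^{-2}\gamma_{n,1}$ to the law $\Gamma^{(\varepsilon')}$ of $(T_0 \circ e_{\varepsilon'}^\uparrow)(\underline B)$ given $\{\sup_{t\geq0}\norm{e_{\varepsilon'}^\uparrow(\underline B\pi)(t)} < \varepsilon\}$. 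As a geometric sum, $Z_n$ has Laplace transform $r_n / (1 - (1-r_n)\E_n^0[e^{-q n^{-2}\gamma_{n,1}}])$, which therefore converges to $r_{\varepsilon'} / (1 - (1-r_{\varepsilon'})\E[e^{-q \Gamma^{(\varepsilon')}}])$, the Laplace transform of $Z_{\varepsilon'} := \sum_{j=1}^{G_{\varepsilon'}} \Gamma_j^{(\varepsilon')}$, where $G_{\varepsilon'}$ is geometric with parameter $r_{\varepsilon'}$ and independent of the i.i.d.\ copies $\Gamma_j^{(\varepsilon')}$ of $\Gamma^{(\varepsilon')}$. Hence $Z_n \Rightarrow Z_{\varepsilon'}$ and $\liminf_n \P_n^0(g_\varepsilon(X_n) > t) \geq \P(Z_{\varepsilon'} > t)$ for every $t \geq 0$ and every $\varepsilon' \in (0, \varepsilon)$.

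Finally, realizing $Z_{\varepsilon'}$ on the probability space of $\underline B$ as the sum, over the excursions of $\underline B$ that reach level $\varepsilon'$ but not $\varepsilon$ and occur before the first $\varepsilon$-excursion, of the time each spends between its first visit to $\varepsilon'$ and its return to~$0$, one sees that $Z_{\varepsilon'} \uparrow g_\varepsilon(\underline B)$ almost surely as $\varepsilon' \downarrow 0$: every excursion of $\underline B$ preceding the first $\varepsilon$-excursion is eventually counted (it reaches some positive height) and its counted duration increases to its full duration, neither the first $\varepsilon$-excursion nor $g_\varepsilon(\underline B)$ depending on $\varepsilon'$, while $\underline B$ spends Lebesgue-null time at~$0$, so the limiting sum equals $g_\varepsilon(\underline B) < \infty$ a.s. Thus $\P(Z_{\varepsilon'} > t) \uparrow \P^0(g_\varepsilon(\underline B) > t)$, and combining with the upper bound yields $\P_n^0(g_\varepsilon(X_n) > t) \to \P^0(g_\varepsilon(\underline B) > t)$ at every continuity point, i.e.\ $g_\varepsilon(X_n) \Rightarrow g_\varepsilon(\underline B)$ under $\P_n^0$. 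The crux is the lower bound: it is here that one must know that a macroscopic excursion of $\norm{x_n}$ resembles an excursion of $\underline B\pi$, which is the content of Proposition~\ref{prop:conv-e-t} and, through it, of the homogenization estimates of Sections~\ref{sec:closed}--\ref{sec:open}; the geometric-sum argument only becomes legitimate once those are in place. Secondary points are the joint convergence in Proposition~\ref{prop:conv-e-t}, the legitimacy of conditioning on $A_n$ (its limiting boundary being null), and the soft excursion-theoretic facts about $\underline B$ used in the $\varepsilon' \downarrow 0$ step.
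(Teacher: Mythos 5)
Your proof is correct and follows essentially the same route as the paper's: upper bound via the $M/M/1$ coupling, lower bound via a geometric sum of durations of sub-$\varepsilon$ excursions reaching an intermediate level (your $\varepsilon'$, the paper's $\delta$), Laplace-transform passage to the limit using Proposition~\ref{prop:conv-e-t}, and an excursion-theoretic limit as the intermediate level tends to zero. The only difference is cosmetic — you spell out the monotone-convergence step $Z_{\varepsilon'} \uparrow g_\varepsilon(\underline B)$ in somewhat more detail, where the paper simply appeals to ``standard arguments from excursion theory.''
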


\begin{proof}
	Since $\norm{X_n} \geq L_n$ by Lemma~\ref{lemma:coupling-MM1}, it is clear that $T^\uparrow(X_n, \varepsilon) \leq T^\uparrow(L_n, \varepsilon)$, and hence going back in time and using again $\norm{X_n} \geq L_n$ we see that $g_\varepsilon(X_n) \leq g_\varepsilon(L_n)$. On the other hand, since $L_n \Rightarrow \underline B$ it is not difficult to show that $g_\varepsilon(L_n) \Rightarrow g_\varepsilon(\underline B)$, see for instance Lambert and Simatos~\cite{Lambert11:0} where similar computations are carried out. This proves that the sequence $(g_\varepsilon(X_n))$ is tight and that any accumulation point is stochastically dominated by $g_\varepsilon(\underline B)$. We now derive a corresponding lower bound which will conclude the proof. \\
	
	Let $\delta > 0$ and denote by $E^\uparrow_{n,k,\delta}$ the $k$th excursion of~$X_n$ that reaches level $\delta$ shifted at the first time it reaches this value: formally, we have
	\[ E^\uparrow_{n,1,\delta} = e_\delta^\uparrow(X_n) \ \text{ and } \ E^\uparrow_{n,k+1,\delta} = \left( e_\delta^\uparrow \circ \theta_{d_{n,k,\delta}} \right) (X_n) \]
	with $d_{n,k,\delta}$ the right endpoint of the excursion of $X_n$ corresponding to $E_{n,k,\delta}$. Let $N_{n,\delta,\varepsilon}$ be the number of excursions of~$X_n$ that reach level $\delta$ and not level $\varepsilon$ before the first excursion of~$X_n$ to reach level~$\varepsilon$: it satisfies
	\[ N_{n,\delta,\varepsilon} + 1 = \inf \left \{ k \geq 1: \sup_{t \geq 0} \norm{E^\uparrow_{n,k,\delta}(t)} \geq \varepsilon \right\}. \]
	
	Then $N_{n,\delta,\varepsilon}$ is a geometric random variable with parameter $p_{n,\delta,\varepsilon}$ given by $p_{n,\delta,\varepsilon} = \P_n^0(\sup \norm{e_\delta^\uparrow(X_n)} < \varepsilon)$, the $(E^\uparrow_{n,k,\delta}, 1 \leq k \leq N_{n, \delta, \varepsilon})$ are i.i.d., independent of $N_{n,\delta,\varepsilon}$ and with common distribution $e_\delta^\uparrow(X_n)$ conditioned on $\{ \sup \norm{e_\delta^\uparrow(X_n)} < \varepsilon \}$, and we have
	\[ g_\varepsilon(X_n) \geq \sum_{k=1}^{N_{n,\delta,\varepsilon}} T_0\left( E^\uparrow_{n,k,\delta} \right) \]
	with the convention $\sum_1^0 = 0$. This last inequality implies for any $s > 0$
	\[
		\E_n^0 \left( e^{-s g_\varepsilon(X_n)} \right) \leq \E \left[ \left\{ \E_n^0 \left( e^{-s (T_0 \circ e_\delta^\uparrow)(X_n)} \, | \, \sup \norm{e_\delta^\uparrow(X_n)} < \varepsilon \right) \right\}^{N_{n,\delta,\varepsilon}} \right]
	\]
	and it can be computed that this last upper bound is equal to
	\[
		\frac{1-p_{n,\delta,\varepsilon}}{1-p_{n,\delta,\varepsilon}\E_n^0 \left( e^{-s (T_0 \circ e_\delta^\uparrow)(X_n)} \, | \, \sup \norm{e_\delta^\uparrow(X_n)} < \varepsilon \right)}
		= \frac{\P_n^0(\sup \norm{e_\delta^\uparrow(X_n)} \geq \varepsilon)}{1-\E_n^0 \left( e^{-s (T_0 \circ e_\delta^\uparrow)(X_n)} ; \sup \norm{e_\delta^\uparrow(X_n)} < \varepsilon \right)}.
	\]
	
	Proposition~\ref{prop:conv-e-t} and the continuous-mapping theorem give
	\[ \lim_{n \to +\infty} \P_n^0\left(\sup \norm{e_\delta^\uparrow(X_n)} \geq \varepsilon\right) = \P^0\left(\sup e_\delta^\uparrow(\underline B) \geq \varepsilon\right). \]
	
	On the other hand, the two convergences of Proposition~\ref{prop:conv-e-t} can be shown to hold jointly, see for instance Lambert and Simatos~\cite{Lambert12:0}, and so the continuous-mapping theorem gives
	\[ \lim_{n \to +\infty} \E_n^0 \left( e^{-s (T_0 \circ e_\delta^\uparrow)(X_n)} ; \sup \norm{e_\delta^\uparrow(X_n)} < \varepsilon \right) = \E^0 \left( e^{-s (T_0 \circ e_\delta^\uparrow)(\underline B)} ; \sup e_\delta^\uparrow(\underline B) < \varepsilon \right). \]
	
	Thus for any $\delta > 0$ we have
	\[ \limsup_{n \to +\infty} \E_n^0 \left( e^{-s g_\varepsilon(X_n)} \right) \leq \frac{\P^0(\sup \norm{e_\delta^\uparrow(\underline B)} \geq \varepsilon)}{1-\E^0 \left( e^{-s (T_0 \circ e_\delta^\uparrow)(\underline B)} ; \sup \norm{e_\delta^\uparrow(\underline B)} < \varepsilon \right)}. \]
	
	Using standard arguments from excursion theory, the above upper bound is seen to converge towards $\E^0(e^{-s g_\varepsilon(\underline B)})$ as $\delta \to 0$, and so we finally get
	\[ \limsup_{n \to +\infty} \E_n^0 \left( e^{-s g_\varepsilon(X_n)} \right) \leq \E^0 \left( e^{-s g_\varepsilon(\underline B)} \right). \]
	
	This implies that any accumulation point of the tight sequence $(g_\varepsilon(X_n))$ is stochastically lower bounded by $g_\varepsilon(\underline B)$, and since a corresponding stochastic upper bound holds this gives the result.
\end{proof}

\section{Convergence of the stationary distributions}
\label{sec:conv-distr}

Throughout this section the heavy-traffic assumption of Section~\ref{sub:main-results} continues to be
in force, and we moreover assume that $\alpha > 0$, so that $\rho_n < 1$ for $n$ large enough.
Recall that in this case $x_n$ is positive-recurrent,
see~\cite{Ganesh10:0, Simatos10:0}, and that $\nu_n$ denotes its
stationary distribution.
This section is devoted to proving the following result, where we write
similarly as in the previous section $X_n(0) = n^{-1} x_n(0)$.

\begin{theorem}
\label{thm:ht-stationary-distribution}
The sequence $(X_n(0), n \geq 1)$ under $\P_n^{\nu_n}$ converges weakly
as $n$ goes to infinity to $E \pi$ where $E$ is an exponential random
variable with parameter~$\alpha$, and all higher moments converge as well,
i.e., $\E_n^{\nu_n}(\norm{X_n(0)}^r) \to r!/\alpha^r$ for all integer $r \geq 0$.
\end{theorem}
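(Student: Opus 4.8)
The plan is to obtain the weak convergence by a subsequential-limit argument built on Theorem~\ref{thm:cv-process}, and then to upgrade it to convergence of all moments through a uniform exponential-moment bound on the rescaled stationary distributions.

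\emph{Weak convergence.} By Lemma~\ref{lemma:coupling-MM1}, under $\P_n^{\nu_n}$ the variable $\norm{x_n(0)}$ stochastically dominates a geometric random variable with parameter $\rho_n$; since $n(1-\rho_n)\to\alpha$, dividing by $n$ this lower bound converges weakly to an exponential random variable with parameter $\alpha$, so in particular $(\norm{X_n(0)})$ is bounded away from $0$ and no subsequential weak limit has an atom at the origin. Next I would check homogenization at stationarity: for $\delta>0$ small enough that $\eta:=\underline\pi\delta^2/32<2\varepsilon_0$, writing $\tau=\tau(\delta/(4K))$, stationarity gives $\P_n^{\nu_n}(\norm{R_n(0)-\pi}\ge\delta)=\P_n^{\nu_n}(\norm{r_n(\tau)-\pi}\ge\delta)$; splitting according to whether $\norm{x_n(0)}\ge\epsilon n$ or not, and using Lemma~\ref{lemma:bound-open} on the first event and the geometric lower bound on the second, shows this tends to $0$ as $n\to\infty$ and then $\epsilon\to0$. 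Tightness of $(\norm{X_n(0)})$ follows from the moment bound below. Now fix a subsequence along which $(X_n(0))$ under $\P_n^{\nu_n}$ converges weakly, say to a law $\nu$. Since $\norm{X_n(0)}$ converges in distribution and $R_n(0)\to\pi$ in probability, the general-initial-condition version of Theorem~\ref{thm:cv-process} recorded just after its statement --- together with a conditioning (or Skorokhod-coupling) argument and the weak continuity of $b\mapsto\mathrm{Law}(\underline B\pi\text{ under }\P^b)$, the null set $\{b=0\}$ being harmless by the previous remark --- yields that $(X_n)$ under $\P_n^{\nu_n}$ converges weakly to $\underline B\pi$ started from $\nu$. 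Since $X_n(t)$ has the rescaled law $\nu_n$ for every $t$, in the limit $(\underline B\pi)(t)$ has law $\nu$ for every $t$, so $\nu$ is invariant for $\underline B\pi$, equivalently its radial part is invariant for $\underline B$. As $B$ has negative drift $-\lambda\alpha$ and variance $2\lambda$, the reflected process $\underline B$ is positive recurrent with unique stationary law the exponential distribution of parameter $2\lambda\alpha/(2\lambda)=\alpha$, so $\nu$ is the law of $E\pi$. Uniqueness of the subsequential limit plus tightness then give $(X_n(0))\Rightarrow E\pi$ under $\P_n^{\nu_n}$, and since $\norm{E\pi}=E$ also $\norm{X_n(0)}\Rightarrow E$.

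\emph{Moments.} It suffices to prove that there are $\theta_0>0$ and $C<\infty$ with $\E_n^{\nu_n}(e^{(\theta/n)\norm{x_n(0)}})\le C$ for all $0<\theta<\theta_0$ and all large $n$: this gives tightness, uniform integrability of every power of $\norm{X_n(0)}$, and hence $\E_n^{\nu_n}(\norm{X_n(0)}^r)\to\E(E^r)=r!/\alpha^r$. I would apply the stationarity identity $\int\Omega_nV_\theta\,d\nu_n=0$ to $V_\theta(y)=e^{(\theta/n)\norm y}$. Internal movements leave $\norm y$ unchanged, and $e^{\pm\theta/n}=1\pm\theta/n+O(n^{-2})$ together with $\lambda_n-\mu_n=-(1-\rho_n)\mu_n\sim-\lambda\alpha/n$ and $\lambda_n+\mu_n\to2\lambda$ show that on the homogenized region $\{\varrho(y)<\underline\pi\}$, where for $\norm y\ge1$ every node is non-empty so the total departure rate equals $\mu_n$, one has $\Omega_nV_\theta(y)\le-c_\theta n^{-2}V_\theta(y)$ with $c_\theta>0$ whenever $\theta<\alpha$ and $n$ is large (the single state $0$ being an exceptional compact set), while on $\{\varrho(y)\ge\underline\pi\}$ only the crude bound $\Omega_nV_\theta(y)\le C\theta n^{-1}V_\theta(y)$ is available. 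Integrating against $\nu_n$ reduces everything to showing $\int_{\{\varrho(y)\ge\underline\pi\}}V_\theta\,d\nu_n\le\varepsilon_n\int V_\theta\,d\nu_n$ with $n\varepsilon_n\to0$, i.e.\ that $\nu_n$, weighted by $V_\theta$, charges the non-homogenized set negligibly. This last point I would establish with one further use of stationarity (sampling at time $\tau$), Lemma~\ref{lemma:bound-open}, and a Cauchy--Schwarz step bounding $\E_n^y(V_\theta(x_n(\tau))^2)$ by a constant times $V_{2\theta}(y)$: the probability of failing to homogenize from a state with $\norm y=m$ decays faster than any power of $m$, while the geometric lower bound confines $\nu_n$ to states with $\norm y\gg\log n$ with overwhelming probability.

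\emph{Main obstacle.} The delicate step is this last one: the pointwise drift of $V_\theta$ is not negative on the non-compact set of large but non-homogenized states, so a plain Foster--Lyapunov argument does not close, and one must instead combine the generator estimate with the homogenization bounds --- effectively averaging the fast $M/M/\infty$-like dynamics over a time window of order one --- to show that the stationary measure does not accumulate there. Everything else is a routine assembly of results already proved.
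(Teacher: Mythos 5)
Your route is genuinely different from the paper's. The paper proves the moment convergence \emph{first}, by summing the balance equations of $x_n$ over $\{\norm{y}\le m-1\}$ to get an exact recursion (their eq.~(6.1)) between $\P_n^{\nu_n}(\norm{x_n(0)}=m-1)$ and $\P_n^{\nu_n}(\norm{x_n(0)}=m)$ with an error term $\sum_k\mu_{n,k}\P_n^{\nu_n}(\norm{x_n(0)}=m,x_{n,k}(0)=0)$, multiplies by $m^r$, sums, and controls the error via a dedicated estimate (Lemma~\ref{lemma:constant}, $\P_n^{\nu_n}(\norm{x_n(0)}\ge q, x_{n,k}(0)=0)\le ce^{-c'q}$). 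Weak convergence is then cheap: the second moment bound gives uniform integrability and tightness, the geometric lower bound pins down the law of $\norm{X}$ by a mean-matching argument, and homogenization at stationarity forces $X=\norm{X}\pi$. You instead propose a subsequential-limit/invariance argument for weak convergence and a Foster--Lyapunov drift inequality for the moments. The paper's route is more elementary and self-contained; yours is conceptually appealing but leans on the unproved general-initial-condition extension of Theorem~\ref{thm:cv-process} (stated only informally in the text, for deterministic $b>0$), plus a conditioning/continuity step to promote it to a random limit law.

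There is, however, a concrete error in your moment argument. You reduce the exponential moment bound to showing $\int_{\{\varrho\ge\underline\pi\}}V_\theta\,d\nu_n\le\varepsilon_n\int V_\theta\,d\nu_n$ with $n\varepsilon_n\to0$. That target is unachievable and in fact contradicts your own drift estimates: plugging $\Omega_nV_\theta\le -c_\theta n^{-2}V_\theta$ on the homogenized set and $\Omega_nV_\theta\le C\theta n^{-1}V_\theta$ on the rest into $\int\Omega_nV_\theta\,d\nu_n=0$ gives $\int V_\theta\,d\nu_n\le(1+C\theta n/c_\theta)\int_{\{\varrho\ge\underline\pi\}}V_\theta\,d\nu_n$, so the ratio $\varepsilon_n$ is bounded \emph{below} by a constant times $1/n$, and $n\varepsilon_n$ stays away from $0$. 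What your tools actually deliver --- and what does close the argument --- is the \emph{absolute} bound $\int_{\{\varrho\ge\underline\pi\}}V_\theta\,d\nu_n=O(1/n)$: by stationarity, Markov at $\tau$, Cauchy--Schwarz, Lemma~\ref{lemma:bound-open} and the Poisson control on $a_n+d_n$, one gets $\int_{\{\varrho\ge\underline\pi\}}V_\theta\,d\nu_n\le C'\E_n^{\nu_n}[e^{-a\norm{x_n(0)}}]$ for some fixed $a>0$, and the geometric lower bound of Lemma~\ref{lemma:coupling-MM1} makes this last expectation $\le(1-\rho_n)/(1-\rho_ne^{-a})=O(1/n)$. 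With that absolute $O(1/n)$ estimate, the drift inequality gives $\int V_\theta\,d\nu_n=O(1)$. Note also that $\int\Omega_nV_\theta\,d\nu_n=0$ must be justified (e.g.\ by truncating $V_\theta$ and passing to the limit), since $V_\theta$ is unbounded and the internal-movement rates grow linearly in $\norm{y}$; the paper sidesteps all of these integrability and drift-sign issues by working with the explicit recursion, which is why its argument is shorter.
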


Since the exponential random variable~$E$ in the above theorem has the
stationary distribution of the reflected Brownian motion~$\underline B$ introduced
in Theorem~\ref{thm:cv-process}, we may observe that the heavy-traffic
characteristics are preserved under an interchange of limits.
While such an interchange of limits tends to apply in most specific
cases, there do not appear to be any general guarantees for that. The proof of Theorem~\ref{thm:ht-stationary-distribution} relies on
the following estimate, and we will in particular make use of the constants $\underline \pi$ and $\varepsilon_0$ defined in Section~\ref{sub:mobility} and Lemma~\ref{lemma:bound-closed}, respectively.

\begin{lemma}
\label{lemma:constant}
There exist two constants $c, c' \in (0,\infty)$ such that for every $n \geq 1$, every $q \geq 0$ and every $1 \leq k \leq K$,
\[ \P_n^{\nu_n} \left( \norm{x_n(0)} \geq q, x_{n,k}(0) = 0 \right) \leq c e^{-c' q}. \]
\end{lemma}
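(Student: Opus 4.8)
The plan is to bound $\P_n^{\nu_n}(\norm{x_n(0)} \geq q,\ x_{n,k}(0) = 0)$ by combining stationarity of $\nu_n$ with the homogenization estimates of Section~\ref{sec:open} and the coupling with the $M/M/1$ queue. The key point is that if $x_{n,k}(0) = 0$ then $\varrho(x_n(0)) \geq \underline{\pi}$, i.e.\ the state is \emph{not} homogenized, whereas Proposition~\ref{prop:control-homogenization} tells us that, starting from a large homogenized state, the process stays homogenized for a very long time with overwhelming probability. So a non-homogenized state with many users is rare under the stationary measure. The subtlety is that we want a bound for \emph{every} $n$ and $q$, not just asymptotically, so we cannot simply invoke weak convergence of $\nu_n$; we need the quantitative estimates.

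First I would fix $\delta \leq \underline{\pi}$ and set $\eta = \underline{\pi}\delta^2/32$ with $\eta < 2\varepsilon_0$, and choose $\phi$ a large fixed integer so that the hypotheses of Proposition~\ref{prop:control-homogenization} are met. The strategy is to look backwards in time: by stationarity, for any $t > 0$,
\[
\P_n^{\nu_n}\big(\norm{x_n(t)} \geq q,\ x_{n,k}(t) = 0\big) = \P_n^{\nu_n}\big(\norm{x_n(0)} \geq q,\ x_{n,k}(0) = 0\big),
\]
so it suffices to show the event on the left is unlikely for some well-chosen $t$. Run the stationary chain from time $0$: with the coupling of Lemma~\ref{lemma:coupling-MM1}, as long as no node is empty $\norm{x_n}$ agrees with the $M/M/1$-queue process $\ell_n$, which in stationarity is stochastically bounded by a geometric variable; more importantly, $x_{n,k}(t)=0$ forces $\norm{r_n(t) - \pi} \geq \underline{\pi} \geq \delta$, i.e.\ $T^\uparrow(r_n - \pi, \delta) \leq t$. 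So I would split according to whether the state was homogenized with at least $\phi$ users at some earlier reference time, or not. Concretely, decompose on the event $\{\norm{x_n(0)} \geq q'\}$ for an appropriate $q' \leq q$ and $\{\varrho(x_n(0)) \leq \eta\}$: on the complement, use Lemma~\ref{lemma:bound-open} and the geometric tail from Lemma~\ref{lemma:coupling-MM1} to get an exponential bound in $q$; on the event itself, apply the Markov property and Proposition~\ref{prop:control-homogenization} with initial state $x_n(0)$ (which has $\varrho \leq \eta$ and $\norm{\cdot} \geq q' > \phi$) to bound the probability that $T^\uparrow(r_n - \pi, \delta) \leq t \wedge T^\downarrow(x_n, \phi)$ by something like $c_2(\delta)\exp(\log t - c_1(\delta^4 q' - \log q'))$, while the event $\{T^\downarrow(x_n,\phi) \leq t\}$ before homogenization breaks is controlled via the $M/M/1$ coupling (it takes a time of order $(q')^2 \gg t$ for the $M/M/1$ queue to drop from $q'$ to $\phi$, and on this time scale homogenization is preserved by Proposition~\ref{prop:control-homogenization}). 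Choosing $t$ of polynomial size in $q$ (say $t = q^3$) makes the $\log t$ term negligible against $\delta^4 q'$, yielding an exponential bound $c\,e^{-c'q}$ uniform in $n$.

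The main obstacle will be handling the regime where the stationary state is homogenized but only has a \emph{moderate} number of users — between $\phi$ (a fixed constant) and $q$ — since there the probability of reaching a non-homogenized configuration with $\geq q$ users is not directly small: the process would first have to climb from $\sim\phi$ users up past $q$, which happens only with probability geometric in $q/\phi$ by the $M/M/1$ lower bound, but one must simultaneously control that it reaches an empty node \emph{at} that height. I would address this by conditioning on the excursion of $\norm{x_n}$ that exceeds level $q$: on such an excursion the process starts (just after up-crossing $\phi$, say) essentially homogenized by Lemma~\ref{lemma:initial-homogenization}-type reasoning, and Proposition~\ref{prop:control-homogenization} then keeps it homogenized for the whole excursion with high probability, so an empty node at height $\geq q$ is doubly unlikely — both the excursion reaching $q$ and the homogenization failing. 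Assembling these pieces and optimizing the free parameters $\delta$, $\phi$, $t = t(q)$ gives the claimed bound; I expect this bookkeeping, rather than any single estimate, to be the technically delicate part.
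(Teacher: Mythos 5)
Your high-level intuition — use stationarity to shift in time and then invoke homogenization to argue that an empty node with many users is exponentially rare — is correct, and it is the same intuition the paper uses. But the route you propose goes through Proposition~\ref{prop:control-homogenization} (long-time persistence of homogenization) and an excursion decomposition, whereas the paper's argument is a one-step shift by a \emph{constant} time $\tau$ and only uses Lemma~\ref{lemma:bound-closed} (instantaneous homogenization of the closed system from an arbitrary initial state) together with a Poisson deviation bound. That simplification is not cosmetic: it removes exactly the ``moderate regime'' obstacle you flag, and it removes a circularity your decomposition would run into. Specifically, the paper writes, for an \emph{arbitrary} $y$,
\[
\P_n^y\bigl(\norm{x_n(\tau)}\geq q,\ x_{n,k}(\tau)=0\bigr)
\leq
\P_n^y\bigl(a_n(\tau)\vee d_n(\tau)\geq\varepsilon q\bigr)
+
\P_n^y\bigl(\norm{x_n(0)}\geq (1-\varepsilon)q,\ r'_{n,k}(\tau)\leq\eta\pi_k\bigr),
\]
bounds the first term by $e^{-\varepsilon q}$ for $q$ large via \eqref{eq:deviation-P}, and bounds the second via Lemma~\ref{lemma:bound-closed} applied to the closed process $r'_n$, which is homogenized after a constant time $\tau$ with probability $1-O(e^{-cq})$ no matter what $y$ is (one only needs $\norm{y}$ to be of order $q$, which is forced by the event). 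Integrating the uniform-in-$y$ bound against $\nu_n$ and using stationarity gives the lemma.

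By contrast, your decomposition of $\nu_n$ according to whether $\varrho(x_n(0))\leq\eta$ runs into a bootstrapping problem: the complementary event $\{\varrho(x_n(0))>\eta,\ \norm{x_n(0)}\text{ large}\}$ under $\P_n^{\nu_n}$ is essentially the quantity the lemma is trying to bound, so you cannot dispose of it by appealing to Lemma~\ref{lemma:bound-open} alone without first shifting in time. Moreover, your fallback via ``Lemma~\ref{lemma:initial-homogenization}-type reasoning'' to certify homogenization at the start of an excursion cannot give what is needed here: Lemma~\ref{lemma:initial-homogenization} is an asymptotic statement as $n\to+\infty$, whereas the present lemma must hold for \emph{every} $n\geq 1$ with constants uniform in $n$. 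Finally, choosing $t$ polynomial in $q$ and controlling the evolution over that window via Proposition~\ref{prop:control-homogenization} is not wrong in spirit, but it adds all the bookkeeping you yourself flag as delicate, none of which is necessary: once you realize that Lemma~\ref{lemma:bound-closed} homogenizes the closed system starting from \emph{any} initial state in a single constant time step, the long-time and excursion machinery becomes superfluous for this particular estimate.
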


\begin{proof}
	Fix $n \geq 1$, $q \geq 0$ and $1 \leq k \leq K$. Recall the constant $\varepsilon_0$ of Lemma~\ref{lemma:bound-closed} and let $0 < \varepsilon < 1$ be any number such that
	\[ 1 - \frac{\varepsilon_0}{\underline \pi} < \frac{2\varepsilon}{\underline \pi (1-\varepsilon)} < 1. \]
	
	Let $\eta = 2\varepsilon / (\underline \pi(1-\varepsilon))$, so that $0 < \eta < 1$ and $(1-\eta) \underline \pi < \varepsilon_0$, and $\tau = \tau((1-\eta) \underline \pi / (2K))$. Then for any $y \in \N^K$, one has
	\begin{multline*}
		\P_n^y \left( \norm{x_n(\tau)} \geq q, x_{n,k}(\tau) = 0 \right) \leq \P_n^y \left( \norm{x_n(\tau)} \geq q, r_{n,k}(\tau) \leq \varepsilon \pi_k \right)\\
		\leq \P_n^y \left( \norm{x_n(\tau)} \geq q, x_{n,k}(\tau) \leq \varepsilon \pi_k \norm{x_n(\tau)}, a_n(\tau) \vee d_n(\tau) \leq \varepsilon q \right)\\
		+ \P_n^y \left( a_n(\tau) \vee d_n(\tau) \geq \varepsilon q \right).
	\end{multline*}
	
	The last term $\P_n^y(a_n(\tau) \vee d_n(\tau) \geq \varepsilon q)$ is upper bounded by $\P(\Pcal(\kappa \tau) \geq \varepsilon q)$ which we control using~\eqref{eq:deviation-P}. One can check that $h(x) \geq x$ for $x \geq e$, hence for $q$ such that $e\kappa \tau \leq \varepsilon q$ we obtain
	\[
		\P_n^y \left( a_n(\tau) \vee d_n(\tau) \geq \varepsilon q \right) \leq e^{-\varepsilon q}, \ q \geq e \kappa \tau / \varepsilon.
	\]
	
	In particular there exists a finite constant $\overline c$ such that $\P_n^y(a_n(\tau) \vee d_n(\tau) \geq \varepsilon q) \leq \overline c e^{-\varepsilon q}$ for all $q \geq 0$. On the other hand, $\{ \norm{x_n(\tau)} \geq q, a_n(\tau) \leq \varepsilon q \} \subset \{ \norm{x_n(0)} \geq (1-\varepsilon)q \}$ and according to~\eqref{eq:coupling-inequalities}, we also have the inclusion
	\[ \left\{ a_n(\tau) \vee d_n(\tau) \leq \varepsilon q \right\} \subset \left\{ x_{n,k}(\tau) \geq x'_{n,k}(\tau) - \varepsilon q, \norm{x_n(\tau)} \leq \norm{x_n(0)} + \varepsilon q \right\}, \]
	hence
	\begin{multline*}
		\P_n^y \left( \norm{x_n(\tau)} \geq q, x_{n,k}(\tau) \leq \varepsilon \pi_k \norm{x_n(\tau)}, a_n(\tau) \vee d_n(\tau) \leq \varepsilon q \right)\\
		\leq \P_n^y \left( \norm{x_n(0)} \geq (1-\varepsilon) q, x'_{n,k}(\tau) \leq \varepsilon \pi_k(\norm{x_n(0)} + \varepsilon q) + \varepsilon q \right)\\
		\leq \P_n^y \left( \norm{x_n(0)} \geq (1-\varepsilon) q, r'_{n,k}(\tau) \leq \varepsilon \pi_k (1 + \varepsilon/(1-\varepsilon)) + \varepsilon / (1-\varepsilon) \right).
	\end{multline*}

	Recalling that $\eta = 2\varepsilon / (\underline \pi(1-\varepsilon))$, one sees that the last term of the previous equation is upper bounded by $\indicator{\norm y \geq (1-\varepsilon) q} \P_n^y(r'_{n,k}(\tau) \leq \eta \pi_k)$ and we have further
	\[ \indicator{\norm y \geq (1-\varepsilon) q} \P_n^y \left( r'_{n,k}(\tau) \leq \eta \pi_k \right) \leq \indicator{\norm y \geq (1-\varepsilon) q} \P_n^y \left( \norm{r'_n(\tau) - \pi} \geq (1-\eta) \underline \pi \right). \]
	Since $0 < (1-\eta) \underline \pi < \varepsilon_0$,
Lemma~\ref{lemma:bound-closed} implies
	\[ \indicator{\norm y \geq (1-\varepsilon) q} \P_n^y \left( \norm{r'_n(\tau) - \pi} \geq (1-\eta) \underline \pi \right) \leq 2K \exp \left( -\frac{(1-\eta)^2 \underline \pi^2 (1-\varepsilon) q}{4K^2} \right). \]
	This proves the result, with for instance $c' = (2K)^{-2} \min(4K^2\varepsilon, (1-\eta)^2 \underline \pi^2 (1-\varepsilon))$ and $c = \overline c + 2K$.
\end{proof}

\begin{proof}
[Proof of Theorem~\ref{thm:ht-stationary-distribution}]

We first prove the convergence of the moments by induction on $r \geq 0$. The result is immediate for $r = 0$ so consider $r \geq 1$ and assume by induction hypothesis that $\E_n^{\nu_n}(\norm{X_n(0)}^s) \to s! / \alpha^s$ for every $0 \leq s \leq r-1$. Let $G_n$ be a geometrically distributed random variable with
parameter~$\rho_n$: Lemma~\ref{lemma:coupling-MM1} implies that
\[
\E_n^{\nu_n} \left( \norm{x_n(0)}^r \right) \geq \E \left( G_n^r \right),
\]
and using $n (1-\rho_n) \to \alpha$, it can be proved that
$\E((G_n/n)^r) \to r! / \alpha^r$. This provides a lower bound and so we only have to show that
$\limsup_{n \to +\infty} \E_n^{\nu_n}(\norm{X_n(0)}^r) \leq r!/\alpha^r$.
Let $m \geq 1$: summing the balance equations over the set
$\{y \in \N^K: \norm{y} \leq m - 1\}$ yields
	\begin{multline} \label{eq:balance}
		\lambda_n \P_n^{\nu_n}\left(\norm{x_n(0)} = m - 1\right) = \mu_n \P_n^{\nu_n}\left(\norm{x_n(0)} = m\right)\\
		- \sum_{k=1}^K \mu_{n, k} \P_n^{\nu_n} \left(\norm{x_n(0)} = m, x_{n, k}(0) = 0\right).
	\end{multline}
	
For any $M \geq 1$, writing $m^r = \sum_{s=0}^r \frac{r!}{s! (r-s)!} (m-1)^s$ gives
\[
\sum_{m = 1}^{M} m^r \P_n^{\nu_n}\left(\norm{x_n(0)} = m - 1\right) =
\sum_{s = 0}^{r} \frac{r!}{s! (r-s)!}
\E_n^{\nu_n} \left( \norm{x_n(0)}^s ; \norm{x_n(0)} \leq M-1 \right)
\]
	and so multiplying~\eqref{eq:balance} with $m^r / \mu_n$ on both sides and summing over $1 \leq m \leq M$, we obtain
\begin{multline*}
\sum_{s = 0}^{r} \frac{r!}{s! (r-s)!}
\rho_n \E_n^{\nu_n} \left( \norm{x_n(0)}^s ; \norm{x_n(0)} \leq M-1 \right)
= \E_n^{\nu_n} \left( \norm{x_n(0)}^r ; \norm{x_n(0)} \leq M \right)\\ -
\frac{1}{\mu_n} \sum_{k=1}^K \mu_{n, k}
\E_n^{\nu_n} \left( \norm{x_n(0)}^r ; \norm{x_n(0)} \leq M, x_{n,k}(0) = 0 \right).
\end{multline*}

Using $\E_n^{\nu_n} \left( \norm{x_n(0)}^r ; \norm{x_n(0)} \leq M \right) \geq \E_n^{\nu_n} \left( \norm{x_n(0)}^r ; \norm{x_n(0)} \leq M - 1 \right)$, other simple inequalities and isolating the terms corresponding to $s = r$ and $s=r-1$ in the previous sum, we end up with
\begin{multline*}
(1-\rho_n)
\E_n^{\nu_n} \left( \norm{x_n(0)}^r ; \norm{x_n(0)} \leq M - 1 \right)
\leq r \E_n^{\nu_n} \left( \norm{x_n(0)}^{r-1} \right) + r! \sum_{s = 0}^{r - 2} \E_n^{\nu_n} \left( \norm{x_n(0)}^s \right)\\
+ \sum_{k=1}^K \E_n^{\nu_n} \left( \norm{x_n(0)}^r ; x_{n,k}(0) = 0 \right).
\end{multline*}

Letting $M \to +\infty$ and dividing by $n^r (1-\rho_n)$ gives
\begin{multline*}
\E_n^{\nu_n} \left( \norm{X_n(0)}^r \right) \leq \frac{r}{n (1-\rho_n)} \E_n^{\nu_n} \left( \norm{X_n(0)}^{r - 1} \right) +
\frac{r!}{n (1-\rho_n)} \sum_{s = 0}^{r - 2} n^{s-(r-1)} \E_n^{\nu_n} \left( \norm{X_n(0)}^s \right)\\
+ \frac{1}{n^r (1-\rho_n)} \sum_{k=1}^K
\E_n^{\nu_n} \left( \norm{x_n(0)}^r ; x_{n,k}(0) = 0 \right).
\end{multline*}

Since $n (1-\rho_n) \to \alpha > 0$, by induction hypothesis the first term of the above upper bound converges to $r! / \alpha^r$ while the second term vanishes, thanks to the terms $n^{s-(r-1)}$ that go to $0$ for $s \leq r-2$. We now show that the last term also vanishes, thus concluding the proof. To this end we prove that $\E_n^{\nu_n}\left(\norm{x_n(0)}^r ; x_{n, k}(0) = 0\right) \to 0$
for all $1 \leq k \leq K$. Let $k \in \{1, \ldots, K\}$ and $M \geq 0$: we distinguish the two events $\{\norm{x_n(0)} \leq M\}$ and $\{\norm{x_n(0)} \geq M+1\}$. On the one hand we have
\[ \E_n^{\nu_n}\left(\norm{x_n(0)}^r ; \norm{x_n(0)} \leq M, x_{n, k}(0) = 0\right) \leq
M^r \P_n^{\nu_n}\left( \norm{x_n(0)} \leq M \right) \leq M^r \left(1-(\rho_n)^M \right) \]
invoking Lemma~\ref{lemma:coupling-MM1} to get the last bound. On the other hand, Lemma~\ref{lemma:constant} gives
\begin{align*}
\E_n^{\nu_n}\left(\norm{x_n(0)}^r ; \norm{x_n(0)} \geq M + 1, x_{n, k}(0)
= 0\right) & = \sum_{m \geq M+1} \P_n^{\nu_n} \left( \norm{x_n(0)}^r \geq m, x_{n, k}(0) = 0 \right) \\
&\leq \sum_{m \geq M} ce^{-c' m^{1/r}}.
\end{align*}

Since $\rho_n \to 1$, letting $n \to +\infty$ for a given fixed $M$ gives
\[ \limsup_{n \to +\infty} \E_n^{\nu_n}\left(\norm{x_n(0)}^r ; x_{n, k}(0) = 0\right) \leq \sum_{m \geq M} ce^{-c' m^{1/r}}. \]

Letting $M \to +\infty$ achieves to prove that integer moments converge.
\\

We now prove the weak convergence result. Since $\E_n^{\nu_n}(\norm{X_n(0)}^2) \to 2/\alpha^2$, the sequence
$(\norm{X_n(0)}, n \geq 1)$ is uniformly integrable and tight. Let $X$ be any accumulation point and assume without loss of generality
that $X_n(0) \Rightarrow X$: we will prove that $X = E \pi$, thus proving the desired result.
	
Since $n (1-\rho_n) \to \alpha$, $\norm{X}$ is stochastically lower
bounded by an exponential random variable with parameter~$\alpha$
thanks to Lemma~\ref{lemma:coupling-MM1}.
On the other hand, since $X_n(0) \Rightarrow X$,
$\E_n^{\nu_n}(\norm{X_n(0)}) \to 1/\alpha$ and $(\norm{X_n(0)}, n \geq 1)$
is uniformly integrable, we get convergence of the means and so
$\E(\norm{X}) = 1/\alpha$.
In summary, if $E$ is an exponential random variable with
parameter~$\alpha$, then $\norm{X}$ is stochastically lower bounded by~$E$
and $\E(\norm{X}) = \E(E)$: hence $\norm{X}$ and $E$ must be equal
in distribution.
	
	In particular, $\P(\norm{X} = 0) = 0$ and so the continuous-mapping theorem implies that $\norm{R_n(0) - \pi} \Rightarrow \norm{X / \norm{X} - \pi}$. Let $0 < \varepsilon < 2 \varepsilon_0$ and $\tau = \tau(\varepsilon / (4K))$: since
	\[ \P_n^{\nu_n} \left( \norm{R_n(0) - \pi} \geq \varepsilon \right) = \P_n^{\nu_n} \left( \norm{r_n(\tau) - \pi} \geq \varepsilon \right) = 0 \]
	Lemma~\ref{lemma:bound-open} implies that $\P_n^{\nu_n} \left( \norm{R_n(0) - \pi} \geq \varepsilon \right) \to 0$ and so $\P(\norm{X / \norm{X} - \pi} \geq \varepsilon) = 0$. Since $\varepsilon < 2 \varepsilon_0$ is arbitrary, letting $\varepsilon \to 0$ shows that $X = \norm{X} \pi$ and since $\norm{X}$ is an exponential random variable with parameter~$\alpha$ this proves the result.
\end{proof}

\section{Convergence of the sojourn times}
\label{sec:sojourn-time}

Throughout this section the heavy-traffic assumption of Section~\ref{sub:main-results} continues to be
in force, and we moreover assume that $\alpha > 0$ so that $\rho_n < 1$ for $n$ large enough.
We fix a sequence $(y_n)$ in $\N^K$ with $\varrho(y_n) \to 0$
and $\norm{y_n} / n \to b \in (0,\infty)$, and we return to the network
description of~$x_n$, see Section~\ref{sub:networks}.
In the sequel we implicitly consider~$x_n$ under $\P_n^{y_n}$ and write
$\Rightarrow$ for weak convergence under $\P_n^{y_n}$.

For $n \geq 1$, we pick an initial user of the $n$th system uniformly
at random, which we refer to as the \emph{tagged user}, and denote
by~$E_n$ its initial service requirement (which is exponentially
distributed with unit mean), by~$\xi_n$ its trajectory and by~$\chi_n$
its sojourn time, so the following relations hold:
\[ \chi_n = \inf \left\{ t \geq 0: s_n(t) = E_n \right\} \ \text{ with } \ s_n(t) = \int_0^{t} \frac{\mu_{n, \xi_n(u)}}{1 \vee x_{n,\xi_n(u)}(u)} du. \]

Note that $s_n(t)$ for $t \leq \chi_n$ is the service received by the
tagged user up to time~$t$.  We want to show that
$n^{-1} \chi_n \Rightarrow b E / \lambda$ where in this section $E$ is
an exponential random variable with unit mean.
Introducing the rescaled service process $S_n(t) = s_n(nt)$,
we have $n^{-1} \chi_n = \inf \left\{ t \geq 0: S_n(t) = E_n \right\}$.
Let $s'_n$ be defined as follows:
\[ s_n'(t) = \frac{1}{\norm{y_n}} \int_0^{t} \frac{\mu_{n, \xi_n(u)}}{\pi_{\xi_n(u)}} du \]
and define $S'_n(t) = s'_n(nt)$. Let finally $S = (b^{-1} \lambda t, t \geq 0)$.

\begin{lemma}
\label{lemma:s'}
	The sequence of processes $(S'_n)$ under $\P_n^{y_n}$ converges weakly to~$S$.
\end{lemma}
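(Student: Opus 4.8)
The plan is to show that $S'_n$ converges weakly to the deterministic linear process $S(t) = b^{-1}\lambda t$. Recall that
\[ S'_n(t) = \frac{1}{\norm{y_n}} \int_0^{nt} \frac{\mu_{n,\xi_n(u)}}{\pi_{\xi_n(u)}} du, \]
where $\xi_n$ is the trajectory of the tagged user under $\P_n^{y_n}$, which is itself a Markov process with generator matrix~$Q$ (independent of everything else in the network). The key observation is that the integrand depends only on the tagged user's location, so the randomness in $S'_n$ comes entirely from $\xi_n$ — which moves on the \emph{fast} time scale — while the multiplicative factor $1/\norm{y_n}$ together with the time change $u \mapsto nu$ produces exactly the averaging we need.

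\textbf{Main steps.} First I would perform the change of variables $u = nv$ to write
\[ S'_n(t) = \frac{n}{\norm{y_n}} \int_0^{t} \frac{\mu_{n,\xi_n(nv)}}{\pi_{\xi_n(nv)}} dv. \]
Since $\norm{y_n}/n \to b$, the prefactor $n/\norm{y_n}$ converges to $1/b$, so it suffices to prove that $\int_0^t \mu_{n,\xi_n(nv)}/\pi_{\xi_n(nv)}\, dv \Rightarrow \lambda t$. Next, because $\xi_n$ is an irreducible Markov chain on $\{1,\dots,K\}$ with stationary distribution~$\pi$, the sped-up process $v \mapsto \xi_n(nv)$ runs on a time scale on which the ergodic theorem applies: for any fixed bounded function $g$ on $\{1,\dots,K\}$,
\[ \int_0^t g(\xi_n(nv))\, dv \longrightarrow t \sum_{k=1}^K \pi_k g(k) \]
in probability (and in fact uniformly on compacts for the running integral, since the limit is continuous), by a standard ergodic/law-of-large-numbers argument for occupation measures of a fixed finite Markov chain under time-acceleration. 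The only subtlety is that here $g = g_n$ depends on $n$ through $g_n(k) = \mu_{n,k}/\pi_k$; however the $\mu_{n,k}$ are uniformly bounded (by $\kappa$), and by the heavy-traffic assumption $\mu_n = \sum_k \mu_{n,k} \to \lambda$. I would therefore split
\[ \int_0^t \frac{\mu_{n,\xi_n(nv)}}{\pi_{\xi_n(nv)}}\, dv = \sum_{k=1}^K \frac{\mu_{n,k}}{\pi_k} \int_0^t \indicator{\xi_n(nv) = k}\, dv, \]
apply the occupation-time convergence $\int_0^t \indicator{\xi_n(nv)=k}\, dv \Rightarrow \pi_k t$ to each of the finitely many terms (jointly, which is automatic since they sum to $t$), and conclude that the sum converges in probability to $\sum_k (\mu_{n,k}/\pi_k)\pi_k t = \mu_n t \to \lambda t$. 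Combining this with $n/\norm{y_n} \to 1/b$ and invoking Slutsky's theorem (or the continuous-mapping theorem together with the fact that a constant limit makes joint convergence free) yields $S'_n \Rightarrow S$ in the Skorokhod topology, the convergence being to a continuous limit so that it upgrades to uniform convergence on compacts.

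\textbf{Main obstacle.} The routine part is the ergodic theorem for the occupation measure of the accelerated chain; the one point needing a little care is handling the $n$-dependence of the coefficients $\mu_{n,k}$, since we are \emph{not} assuming that each individual $\mu_{n,k}$ converges — only that $\mu_n = \sum_k \mu_{n,k}$ does. This is why it is essential to reorganize the integral so that the $\pi_k$ in the denominator cancels against the $\pi_k$ coming out of the occupation-time limit: after cancellation only the \emph{sum} $\sum_k \mu_{n,k} = \mu_n$ appears, which does converge. One should also note that $\xi_n$ under $\P_n^{y_n}$ has a (possibly $n$-dependent) initial distribution, namely the empirical distribution of the $\norm{y_n}$ initial users; but since $\varrho(y_n) \to 0$ this empirical distribution converges to~$\pi$, and in any case the occupation-time limit for a finite irreducible Markov chain under time-acceleration does not depend on the initial distribution. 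Hence no extra work is needed there.
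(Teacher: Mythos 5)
Your proof is correct and takes essentially the same route as the paper: split the integral by node, apply the ergodic theorem for occupation times of the accelerated chain $\xi_n(n\cdot)$, and observe that the $\pi_k$'s cancel so that only the convergent sum $\mu_n = \sum_k \mu_{n,k}$ survives, then combine with $\norm{y_n}/n \to b$. The only cosmetic differences are that the paper reduces process-level convergence to finite-dimensional distributions via the fact that $S'_n$ and $S$ are strictly increasing and continuous (Jacod--Shiryaev VI.3.37) and couples the $\xi_n$'s onto a common probability space, whereas you argue uniform convergence on compacts directly and are somewhat more careful than the paper in noting that the initial distribution of $\xi_n$ may depend on $n$ but is irrelevant for the ergodic limit.
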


\begin{proof}
	Since $S'_n$ and $S$ are strictly increasing and continuous, we only have to show that finite-dimensional distributions converge, see for instance Jacod and Shiryaev~\cite[Theorem~VI.$3.37$]{Jacod03:0}. Since the law of $\xi_n$ does not depend on~$n$, we can couple all the processes $(S'_n, n \geq 1)$ on a common probability space by taking $\xi_n = \xi_1$. Then we have
	\[ S'_n(t) = \frac{1}{\norm{y_n}} \int_0^{nt} \frac{\mu_{n, \xi_1(u)}}{\pi_{\xi_1(u)}} du = \frac{1}{\norm{y_n}} \sum_{k=1}^K \frac{\mu_{n,k}}{\pi_{k}} \int_0^{nt} \indicator{\xi_1(u) = k} \, du \]
	and so
	\[ \left|S'_n(t) - \frac{\lambda t}{b}\right| \leq \frac{n}{\norm{y_n}} \frac{1}{n} \left| \sum_{k=1}^K \frac{\mu_{n,k}}{\pi_{k}} \int_0^{nt} \indicator{\xi_1(u) = k} \, du - n \mu_n t \right| + \left|\frac{n \mu_n}{\norm{y_n}} - \frac{\lambda}{b}\right| t. \]
	The second term goes to~0 by assumption while the first term is easily seen to vanish using the ergodic theorem. This shows that $S_n'(t) \to S(t)$ almost surely, for every $t \geq 0$, which readily implies convergence of the finite-dimensional distributions. This proves the result.
\end{proof}

\begin{lemma}
The sequence of processes $(S_n)$ under $\P_n^{y_n}$ converges weakly to~$S$.
\end{lemma}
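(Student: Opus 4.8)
The plan is to compare $S_n$ with the process $S'_n$ introduced in Lemma~\ref{lemma:s'}, which already converges weakly to the deterministic and continuous limit $S$. It therefore suffices to show that for every $T > 0$,
\[ \sup_{0 \le t \le T} \left\lvert S_n(t) - S'_n(t) \right\rvert \longrightarrow 0 \quad \text{in probability under } \P_n^{y_n}, \]
and then to conclude by a convergence-together argument (Theorem~3.1 in Billingsley~\cite{Billingsley99:0}), exactly as in the proof of Proposition~\ref{prop:conv-e-t}. Writing out the difference,
\[ S_n(t) - S'_n(t) = \int_0^{nt} \mu_{n, \xi_n(u)} \left( \frac{1}{1 \vee x_{n, \xi_n(u)}(u)} - \frac{1}{\norm{y_n}\, \pi_{\xi_n(u)}} \right) du, \]
so the crux is to show that $x_{n,k}(u) \approx \norm{y_n}\pi_k$ for every node $k$, uniformly over the times $u \le nT$ that matter.

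This will be done on a ``good event'' $E_n$ on which: (i) the system stays homogenized up to time $nT$, i.e. $T^\uparrow(r_n - \pi, \delta) > nT$ for a small fixed $\delta \in (0, \underline\pi)$ with $\underline\pi \delta^2/32 < 2\varepsilon_0$; and (ii) $\sup_{u \le nT} \lvert a_n(u) - d_n(u) \rvert \le n^{3/4}$. The step $\P_n^{y_n}(E_n^c) \to 0$ is where the real work lies. For (ii): $a_n - d_n$ recentered by its deterministic drift is a martingale with quadratic variation at most $\kappa\, t$, so Doob's inequality bounds its running supremum over $[0,nT]$ by $O(\sqrt{nT})$, while the drift itself contributes only $O(1)$ over $[0,nT]$ because $\lvert \lambda_n - \mu_n\rvert = \mu_n(1-\rho_n) = O(1/n)$ under the heavy-traffic assumption. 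For (i): we split $\{T^\uparrow(r_n - \pi, \delta) \le nT\}$ according to whether $\norm{x_n}$ has dropped below level $\phi := \lceil (b/4) n \rceil$ before time $nT$ or not. On the complement, Proposition~\ref{prop:control-homogenization} applied with $t = nT$ and this $\phi$ gives a bound tending to~$0$ (its hypotheses hold for $n$ large since $\varrho(y_n) \to 0$ and $\norm{y_n}/n \to b$); and the event that $\norm{x_n}$ drops below $\phi$ before $nT$ is contained, via $\norm{x_n} \ge \ell_n \ge \widetilde\ell_n$ from Lemma~\ref{lemma:coupling-MM1}, in the event $\{\sup_{u \le nT}\lvert a_n(u) - d_n(u)\rvert > n^{3/4}\}$, already handled in (ii).

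On $E_n$, for $n$ large, one has $\widetilde\ell_n(u) \ge bn/4$ and $\lvert \widetilde\ell_n(u) - \norm{y_n}\rvert \le n^{3/4}$ for all $u \le nT$; in particular $\widetilde\ell_n$ never vanishes, so $\ell_n = \widetilde\ell_n$ on $[0,nT]$, while homogenization forces $x_{n,k}(u) \ge (\underline\pi - \delta)\norm{x_n(u)} > 0$ for every $k$ and every $u \le nT$, so no node ever empties and hence $\norm{x_n(u)} = \ell_n(u) = \widetilde\ell_n(u) = \norm{y_n} + a_n(u) - d_n(u)$ for $u \le nT$ by Lemma~\ref{lemma:coupling-MM1} (or directly by~\eqref{eq:sde}). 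Substituting $x_{n,k}(u) = r_{n,k}(u)\widetilde\ell_n(u)$ and using
\[ \left\lvert \frac{1}{r_{n,k}(u)\widetilde\ell_n(u)} - \frac{1}{\pi_k \norm{y_n}} \right\rvert \le \frac{\lvert r_{n,k}(u) - \pi_k \rvert}{r_{n,k}(u)\,\pi_k\,\widetilde\ell_n(u)} + \frac{\lvert \widetilde\ell_n(u) - \norm{y_n}\rvert}{\pi_k\, \widetilde\ell_n(u)\, \norm{y_n}}, \]
the first term is $O(\delta/n)$ (since $\norm{r_n(u)-\pi} < \delta$ and $\widetilde\ell_n(u) \ge bn/4$) and the second is $O(n^{3/4}/n^2)$; multiplying by $\mu_{n,\xi_n(u)} \le \kappa$ and integrating over $u \in [0, nT]$ bounds $\sup_{t \le T}\lvert S_n(t) - S'_n(t)\rvert$ by $C_1(T)\,\delta + C_2(T)\, n^{-1/4}$ on $E_n$. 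Letting first $n \to \infty$ and then $\delta \to 0$ yields the claim.

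The main obstacle is precisely establishing assertion (i): showing that the homogenization property, which \emph{a priori} holds only at fixed deterministic times, persists throughout the entire order-$n$ time window while the total queue length stays of order $n$. This is exactly what Proposition~\ref{prop:control-homogenization} buys us, in combination with the $M/M/1$ lower bound of Lemma~\ref{lemma:coupling-MM1}; by contrast, the fluctuation estimate (ii) is routine.
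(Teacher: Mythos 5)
Your proof is correct and follows the same overall architecture as the paper's: reduce via the convergence-together theorem (Billingsley, Thm.~3.1) to showing $\sup_{[0,T]}\lvert S_n - S'_n\rvert \to 0$ in probability; work on a good event where (i) homogenization persists over $[0,nT]$ and (ii) the total queue length stays close to $\norm{y_n}$; use $\delta \leq \underline\pi$ together with~\eqref{eq:lower-bound-tilde} and Lemma~\ref{lemma:coupling-MM1} to identify $\norm{x_n}$ with $\widetilde\ell_n$; then estimate the integrand pointwise and integrate. Condition~(i) is handled in both proofs by Proposition~\ref{prop:control-homogenization}, after splitting on whether $\norm{x_n}$ has dipped below a level of order~$n$. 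The genuine point of divergence is condition~(ii). The paper's good event $F_n$ caps $T^\uparrow(\norm{x_n}-\norm{y_n}, n\delta)$ and controls $\P_n^{y_n}(F_n^c)$ by appealing to the process-level convergence $X_n \Rightarrow \underline B\pi$ of Theorem~\ref{thm:cv-process}, so that $T^\uparrow(\norm{X_n}-\norm{X_n(0)},\delta)$ converges to a strictly positive limit while $t_0/n \to 0$. You instead bound $\sup_{[0,nT]}\lvert a_n - d_n\rvert$ directly: Doob's $L^2$-maximal inequality on the compensated Poisson martingale gives a running-supremum bound of order $\sqrt{nT}$, and the drift contributes $O(1)$ because $\lvert\lambda_n-\mu_n\rvert = O(1/n)$ under the heavy-traffic assumption; this yields $\P(\text{not (ii)}) = O(n^{-1/2})$ with the threshold $n^{3/4}$. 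Your route is more elementary and self-contained, not invoking Theorem~\ref{thm:cv-process}, at the cost of a slightly longer computation; the paper's is shorter given that theorem is already in hand. A cosmetic difference is that the paper tunes $\delta$ up front so the pathwise bound on $F_n$ is exactly $\leq\varepsilon/2$, whereas you obtain $C_1(T)\delta + C_2(T)n^{-1/4}$ and send $n\to\infty$ first, then $\delta\to 0$; the conclusion is the same.
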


\begin{proof}
	Since $S'_n \Rightarrow S$ by Lemma~\ref{lemma:s'}, Theorem~3.1
in Billingsley~\cite{Billingsley99:0} shows that it suffices to prove
that for any $\varepsilon > 0$ and any $t_0 \geq 0$,
	\[ \lim_{n \to +\infty} \P_n^{y_n} \left( \sup_{0 \leq t \leq nt_0} \left|s_n(t) - s'_n(t) \right| \geq \varepsilon \right) = 0. \]
	
	By definition we have
	\begin{align*}
		\left|s_n(t) - s'_n(t) \right| & = \left| \int_0^{t} \frac{\mu_{n, \xi_n(u)}}{x_{n,\xi_n(u)} \vee 1} du - \frac{1}{\norm{y_n}} \int_0^{t} \frac{\mu_{n, \xi_n(u)}}{\pi_{\xi_n(u)}} du \right|\\
		& \leq \frac{1}{\norm{y_n}} \int_0^{t} \frac{\mu_{n, \xi_n(u)}}{\pi_{\xi_n(u)} (x_{n,\xi_n(u)} \vee 1)} \left|x_{n,\xi_n(u)} \vee 1 - \pi_{\xi_n(u)} \norm{y_n} \right| du.
	\end{align*}
	
	Fix in the sequel $c = 2 + \sup_{n \geq 1} (\norm{y_n}/n)$ and $\delta > 0$ such that
	\[ \delta \leq \underline \pi, \ \ \delta < b \underline \pi / c, \ \ \eta < 2 \varepsilon_0 \ \text{ and } \ \sup_{n \geq 1} 
\left( \frac{\mu_n \delta c n^2 t_0}{\norm{y_n} \underline \pi (\underline \pi \norm{y_n} - \delta c n)} \right) \leq \varepsilon / 2, \]
	where $\eta = \underline \pi \delta^2 / 32$. Let $F_n$ be the event
	\[ F_n = \left\{ T^\uparrow(r_n - \pi, \delta) \geq nt_0 \ \right\} \cap \left\{ T^\uparrow(\norm{x_n} - \norm{y_n}, n \delta) \geq nt_0 \ \right\} \]
	where $\norm{x_n} - \norm{y_n}$ refers to the process $(\norm{x_n(t)} - \norm{y_n}, t \geq 0)$. Since $\delta \leq \underline \pi$ we have $T^\uparrow(r_n - \pi, \delta) \leq \widetilde T_0(x_n)$ by~\eqref{eq:lower-bound-tilde} and so $x_{n,k}(t) > 0$ for all $t \leq n t_0$ and $1 \leq k \leq K$ in~$F_n$, in particular $x_{n,k}(t) \vee 1 = x_{n,k}(t)$. Thus, we also have in~$F_n$
	\begin{align*}
		\left| x_{n,k}(t) \vee 1 - \pi_k \norm{y_n} \right| = \left| x_{n,k}(t) - \pi_k \norm{y_n} \right| & \leq \left| x_{n,k}(t) - \pi_k \norm{x_n(t)} \right| + \pi_k \left| \norm{x_n(t)} - \norm{y_n} \right|\\
		& \leq \left( \norm{y_n} + \delta n \right) \norm{r_n(t) - \pi} + \delta n \leq \delta c n
	\end{align*}
	for all $t \leq n t_0$ and $1 \leq k \leq K$. In particular, $x_{n,k}(t) \geq \underline \pi \norm{y_n} - \delta c n$ which is positive for $n$ large enough since $\delta c < b \underline \pi$. Plugging in these different bounds, we obtain the following inequality, valid in~$F_n$ for any $t \leq n t_0$:
	\begin{align*}
		\left|s_n(t) - s'_n(t) \right| & \leq \frac{1}{\norm{y_n}} \int_0^{t} \frac{\mu_n}{\underline \pi (\underline \pi \norm{y_n} - \delta c n)} \delta c n du \leq \frac{\mu_n \delta c n^2 t_0}{\norm{y_n} \underline \pi (\underline \pi \norm{y_n} - \delta c n)} \leq \varepsilon / 2.
	\end{align*}
	
	Thus we obtain $|s_n(t) - s'_n(t)| < \varepsilon$ in $F_n$ and therefore
	\begin{multline*}
		\P_n^{y_n} \left( \sup_{0 \leq t \leq nt_0} \left|s_n(t) - s'_n(t) \right| \geq \varepsilon \right) = \P_n^{y_n} (F_n^c) \leq \P_n^{y_n} \left( T^\uparrow(\norm{x_n} - \norm{y_n}, n \delta) \leq nt_0 \ \right)\\
		+ \P_n^{y_n} \left( T^\uparrow(r_n - \pi, \delta) \leq nt_0 \wedge T^\uparrow(\norm{x_n} - \norm{y_n}, n \delta) \ \right).
	\end{multline*}
	Since $T^\uparrow(\norm{x_n} - \norm{y_n}, n \delta) \leq T^\downarrow(x_n, \norm{y_n} - n \delta)$, the second term can be shown to go to~0 by Proposition~\ref{prop:control-homogenization} (using that $\varrho(y_n) \leq \eta$ for $n$ large enough since $\varrho(y_n) \to 0$). The first can also be shown to go to~0, since
	\[ \P_n^{y_n} \left( T^\uparrow(\norm{x_n} - \norm{y_n}, n \delta) \leq nt_0 \ \right) = \P_n^{y_n} \left( T^\uparrow(\norm{X_n} - \norm{X_n(0)}, \delta) \leq t_0/n \ \right) \]
	and the convergence $X_n \Rightarrow \underline B \pi$ of Theorem~\ref{thm:cv-process} implies that $T^\uparrow(\norm{X_n} - \norm{X_n(0)}, \delta) \Rightarrow T^\uparrow(B - b, \delta)$ which is strictly positive. The result is proved.
\end{proof}

\begin{corollary}
\label{cor:sojourn}
	The sequence of random variables $(n^{-1} \chi_n)$ under $\P_n^{y_n}$ converges weakly to $b E / \lambda$.
\end{corollary}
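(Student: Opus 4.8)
The plan is to combine the convergence $S_n \Rightarrow S$ established in the preceding lemma with the independence of the tagged user's service requirement $E_n$ from the rest of the system, and then invoke a standard first-passage continuous-mapping argument. Recall that $n^{-1}\chi_n = \inf\{t \geq 0 : S_n(t) = E_n\}$, that $S$ is the deterministic linear path $t \mapsto b^{-1}\lambda t$, and that $E_n$ is exponentially distributed with unit mean, independent of $\xi_n$ and of $(a_{n,k}, d_{n,k})$; hence $E_n$ is independent of the process $S_n$.

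First I would note that, since $S_n$ is (strictly) increasing with $S_n(0)=0$ and $S_n(\infty)=+\infty$ almost surely (the tagged user's trajectory spends a positive fraction of time at each node and the corresponding service rates $\mu_{n,k}/(1\vee x_{n,k})$ are bounded below along any finite horizon once the total population is controlled — but more simply, $S_n$ is unbounded because $s_n$ is), the first-passage functional $f \mapsto \inf\{t : f(t) = e\}$ is continuous at the pair $(S, e)$ for every fixed $e > 0$, because $S$ is strictly increasing and continuous. Therefore, by the joint convergence $(S_n, E_n) \Rightarrow (S, E)$ — which follows from $S_n \Rightarrow S$, the fixed law of $E_n$, and independence, using that the limit $S$ is deterministic so that marginal convergence plus independence yields joint convergence — the continuous-mapping theorem gives $n^{-1}\chi_n = \inf\{t : S_n(t) = E_n\} \Rightarrow \inf\{t : S(t) = E\} = \inf\{t : b^{-1}\lambda t = E\} = bE/\lambda$.

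More precisely, I would make the continuity statement quantitative to avoid measure-theoretic subtleties at the (null) set where $E=0$: for fixed $e>0$ and $\beta>0$, on the event $\{\sup_{0\leq t\leq (e+1)b/\lambda}|S_n(t)-S(t)| < \beta\}$ one has, by strict monotonicity of $S$ with slope $b^{-1}\lambda$, that $|\inf\{t: S_n(t)=e\} - be/\lambda| \leq \beta\lambda^{-1}b + o(1)$-type control; conditioning on $E_n = e$ and then integrating over the law of $E_n$ (dominated convergence, using $\P(E_n=0)=0$) transfers the convergence in probability of $\sup|S_n - S|$ on compacts to the convergence of $n^{-1}\chi_n$.

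The main obstacle is purely bookkeeping: one must ensure that the convergence $S_n \Rightarrow S$, which a priori holds in the Skorokhod topology on $D_1$ restricted to compact time intervals, is strong enough to control the first-passage time, which could in principle escape to infinity. This is handled by observing that the limit $S$ is continuous and strictly increasing, so first-passage times are continuous functionals (no issue of the path being flat at the target level, which is the usual source of discontinuity), together with the elementary a priori bound that $n^{-1}\chi_n$ is tight — indeed $S_n(t) \geq s_n'(nt) - \sup_{[0,nt_0]}|s_n - s_n'|$, the latter being negligible on the good event $F_n$ from the previous lemma's proof, and $s_n'(nt)$ grows linearly, so $n^{-1}\chi_n$ does not escape to infinity. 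Combining tightness with the continuous-mapping argument on compacts completes the proof.
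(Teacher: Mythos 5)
Your proof is correct and follows essentially the same route as the paper: both rely on $S_n \Rightarrow S$, upgrade to joint convergence with $E_n$ via the determinism of the limit $S$, and conclude by the continuous-mapping theorem applied to the first-passage functional (the paper phrases it as $S_n^{-1}\Rightarrow S^{-1}$ and then evaluates at $E_n$, but this is the same argument). Your appeal to independence for the joint convergence is superfluous — determinism of the limit already yields it — but harmless, and your quantitative continuity remarks merely flesh out what the paper cites Whitt for.
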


\begin{proof}
	Since $S_n \Rightarrow S$ and $S$ is strictly increasing and continuous with $S(0) = 0$, the continuous-mapping theorem implies that $S_n^{-1} \Rightarrow S^{-1}$, see for instance Whitt~\cite{Whitt02:0}. Moreover, the $E_n$'s are identically distributed and so $E_n \Rightarrow E$. Since $S^{-1}$ is deterministic we get the joint convergence $(S_n^{-1}, E_n) \Rightarrow (S^{-1}, E)$ and the continuous-mapping theorem then implies that $S_n^{-1}(E_n) \Rightarrow S^{-1}(E)$, since $S^{-1}$ is a continuous function. Since $S_n^{-1}(E_n) = n^{-1} \chi_n$ and $S^{-1}(E) = b E / \lambda$ this proves the result.
\end{proof}

\begin{corollary}
	The sequence of random variables $(n^{-1} \chi_n)$ under $\P_n^{\nu_n}$ converges weakly to $E' E / \lambda$ with $E'$ an exponential random variable independent of~$E$ and with parameter~$\alpha$.
\end{corollary}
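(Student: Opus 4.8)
The plan is to obtain Corollary~\ref{cor:} by mixing Corollary~\ref{cor:sojourn}, which gives the limiting sojourn time when the homogenized initial configuration has total mass of order~$bn$, against Theorem~\ref{thm:ht-stationary-distribution}, which says that under~$\P_n^{\nu_n}$ the rescaled initial configuration $X_n(0)$ converges weakly to $E'\pi$ with $E'$ exponential of parameter~$\alpha$; in particular $\norm{X_n(0)} \Rightarrow E'$, $\varrho(x_n(0)) = \norm{R_n(0) - \pi} \to 0$ in probability, and $\P_n^{\nu_n}(\norm{x_n(0)} = 0) \leq 1 - \rho_n \to 0$ by Lemma~\ref{lemma:coupling-MM1}, so the tagged user is well defined with probability tending to~$1$. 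The point is that, conditionally on $x_n(0) = y$, the sojourn time of the tagged user has the same law as $\chi_n$ under~$\P_n^y$: in both cases the tagged user is picked uniformly among the initial users, and by the memoryless property its residual service requirement is a unit-mean exponential independent of the configuration. I would then work at the level of Laplace transforms. Fixing $s > 0$, set $\psi(c) = \lambda/(\lambda + sc)$, the Laplace transform of $cE/\lambda$ evaluated at~$s$; $\psi$ is bounded and continuous, and $\E(\psi(E')) = \E(e^{-sE'E/\lambda})$ since $E'$ and $E$ are independent. Thus it suffices to show $\E_n^{\nu_n}(e^{-s\chi_n/n}) \to \E(\psi(E'))$.

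First I would record a uniform version of Corollary~\ref{cor:sojourn}: for every sequence $\varepsilon_n \downarrow 0$ and all $0 < a < b < \infty$,
\[ \sup\bigl\{ \lvert \E_n^y(e^{-s\chi_n/n}) - \psi(\norm{y}/n) \rvert : y \in \N^K,\ \varrho(y) \leq \varepsilon_n,\ a \leq \norm{y}/n \leq b \bigr\} \longrightarrow 0. \]
This should follow from Corollary~\ref{cor:sojourn} by a subsequence argument: if it failed, one could extract states $y_j$ along a subsequence $(n_j)$ with $\varrho(y_j) \to 0$, $\norm{y_j}/n_j \to b_\infty \in [a,b]$ and $\lvert \E_{n_j}^{y_j}(e^{-s\chi_{n_j}/n_j}) - \psi(\norm{y_j}/n_j) \rvert$ bounded away from~$0$; completing $(y_j)$ into a full sequence $(z_n)$ with $\varrho(z_n) \to 0$ and $\norm{z_n}/n \to b_\infty$ (integer vectors close to $n b_\infty \pi$ will do) and applying Corollary~\ref{cor:sojourn} to~$(z_n)$ gives $n^{-1}\chi_n \Rightarrow b_\infty E/\lambda$ under~$\P_n^{z_n}$, hence $\E_n^{z_n}(e^{-s\chi_n/n}) \to \psi(b_\infty)$, which along $(n_j)$ contradicts the above since $\psi(\norm{y_j}/n_j) \to \psi(b_\infty)$ by continuity.

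Then, choosing $\varepsilon_n \downarrow 0$ slowly enough that $\P_n^{\nu_n}(\varrho(x_n(0)) > \varepsilon_n) \to 0$ and setting, for $\varepsilon \in (0,1)$, $A_{n,\varepsilon} = \{\varepsilon \leq \norm{X_n(0)} \leq 1/\varepsilon\} \cap \{\varrho(x_n(0)) \leq \varepsilon_n\}$, I would condition on $x_n(0)$ to write $\E_n^{\nu_n}(e^{-s\chi_n/n}; \norm{x_n(0)} > 0) = \E_n^{\nu_n}(g_n(x_n(0)); \norm{x_n(0)} > 0)$ with $g_n(y) = \E_n^y(e^{-s\chi_n/n})$. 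On $A_{n,\varepsilon}$ the uniform bound controls $\lvert g_n(x_n(0)) - \psi(\norm{X_n(0)}) \rvert$, off $A_{n,\varepsilon}$ this quantity is at most~$1$, and $\limsup_n \P_n^{\nu_n}(A_{n,\varepsilon}^c) \leq \P(E' < \varepsilon) + \P(E' > 1/\varepsilon)$ because $E'$ has no atoms; together with $\E_n^{\nu_n}(\psi(\norm{X_n(0)})) \to \E(\psi(E'))$ (as $\psi$ is bounded continuous and $\norm{X_n(0)} \Rightarrow E'$) and $\P_n^{\nu_n}(\norm{x_n(0)} = 0) \to 0$ this yields $\limsup_n \lvert \E_n^{\nu_n}(e^{-s\chi_n/n}) - \E(\psi(E')) \rvert \leq 2(\P(E' < \varepsilon) + \P(E' > 1/\varepsilon))$, and letting $\varepsilon \downarrow 0$ concludes.

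I expect the uniform version of Corollary~\ref{cor:sojourn} to be the main obstacle: one must make sure the subsequence-and-interpolation step is legitimate, i.e.\ that the convergence in Corollary~\ref{cor:sojourn} really depends on the initial state only through $\varrho(y)$ and $\norm{y}/n$ and is insensitive to replacing a genuine sequence by an interpolated one. An alternative that sidesteps this is to invoke a Skorokhod coupling for $X_n(0) \Rightarrow E'\pi$ and rerun the proof of Corollary~\ref{cor:sojourn} conditionally on the (now almost surely convergent) initial configuration. The remaining ingredients — reduction to Laplace transforms, disposal of the empty state via Lemma~\ref{lemma:coupling-MM1}, and the final mixing over~$E'$ — are routine.
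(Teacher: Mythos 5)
Your proposal takes exactly the same route as the paper---mixing Corollary~\ref{cor:sojourn} against the stationary convergence of Theorem~\ref{thm:ht-stationary-distribution}---and correctly fills in the details that the paper leaves implicit, namely the reduction to Laplace transforms, the uniformization of Corollary~\ref{cor:sojourn} over initial states via a subsequence/interpolation argument (which is legitimate here, since Corollary~\ref{cor:sojourn} depends on the initial state only through $\varrho(y_n)\to 0$ and $\norm{y_n}/n\to b$), and the disposal of the empty state using Lemma~\ref{lemma:coupling-MM1}. No gaps.
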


\begin{proof}
	This is a consequence of Corollary~\ref{cor:sojourn} together with the fact that $n^{-1} x_n(0)$ under $\P_n^{\nu_n}$ converges weakly to $E' \pi$ by Theorem~\ref{thm:ht-stationary-distribution}.
\end{proof}

The above corollary is similar to heavy-traffic results for the
sojourn time distribution ordinary Processor-Sharing queue,
see for instance Sengupta~\cite{Sengupta92:0}, Yashkov~\cite{Yashkov93:0},
and Zwart \& Boxma~\cite{Zwart00:0}.
These results may be intuitively explained by the snapshot principle, see Reiman~\cite{Reiman82:0}: in
heavy-traffic conditions the total number of users in the system hardly
varies over the time scale of a sojourn time.
Thus each individual user sees a service rate that is random,
determined by the inverse of the total number of users in stationarity
which has an asymptotically exponential distribution in heavy traffic,
but nearly constant over the duration of its sojourn time.

It is worth emphasizing that although in the present model the users
within each of the individual nodes are served in a Processor-Sharing
manner, at any given time the service rates of users may strongly vary
across nodes.
Due to the homogenization property, however, the empirical distribution
of the location of each individual user over the course of a long
sojourn time in a heavy-traffic regime will be close to the stationary
distribution~$\pi$.
Hence each individual user will see a $\pi$-weighted average of the
service rates in the various nodes, which is only affected by the
exponentially distributed total number of users in the entire system,
just like in an ordinary Processor-Sharing queue.

\bibliographystyle{plain}

\end{document}